\numberwithin{equation}{section}
\newtheorem{theorem}{Theorem}[section]
\newtheorem{cor}[theorem]{Corollary}
\newtheorem{lem}[theorem]{Lemma}
\newtheorem{prop}[theorem]{Proposition}
\newtheorem{dfn}[theorem]{Definition}
\newtheorem{as}[theorem]{Hypothesis}
\newtheorem{rem}[theorem]{Remark}
\numberwithin{lemma}{section}
\numberwithin{proposition}{section}
\numberwithin{definition}{section}
\numberwithin{corollary}{section}
\numberwithin{remark}{section}
\DeclareMathAlphabet{\mathpzc}{OT1}{pzc}{m}{it}
\newcommand{\blue}[1]{\textcolor{black}{#1}}
\newcommand{\bluet}[1]{\textcolor{black}{#1}}
\newcommand{\norm}[2]{\left\lVert#1\right\rVert_{#2}}
\newcommand{\hu}{\hat{u}}
\newcommand{\hb}{\hat{B}}
\newcommand{\Uad}{\mathcal{U}_{\rm ad}}
\newcommand{\whq}{\widehat{\calq}}
\newcommand{\wtq}{\widetilde{\calq}}
\newcommand{\umin}{\check{u}}
\newcommand{\umax}{\hat{u}}
\newcommand{\Uspace}{{L^2(0,T)}^m}
\newcommand{\gstate}{\bar{g}^{(1)}_j[t]}
\newcommand{\gstatetau}{\bar{g}^{(1)}_j[\tau]}
\newcommand{\gstateC}{\bar{g}^{(1)}_{C_k}[t]}
\newcommand\finsquare{\end{proof}}
\def\dd{{\rm d}}
\newcommand{\ddt}{\frac{\rm d}{{\rm d} t} }
\def\weight(#1,#2){c_{#1,#2}}
\def\ra{{\rm ra}}
\newcommand\findem{\hfill{$\blacksquare$}\medskip}
\def\uh{\hat{u}}
\def\yh{\hat{y}}
\def\fb{\bar{f}}
\def\hb{\bar{h}}
\def\pb{\bar{p}}
\def\tb{\bar{t}}
\def\ub{\bar{u}}
\def\vb{\bar{v}}
\def\wb{\bar{w}}
\def\xb{\bar{x}}
\def\yb{\bar{y}}
\def\Bb{\bar{B}}
\def\Mb{\bar{M}}
\def\Nb{\bar{N}}
\def\yt{\tilde{y}}
\def\cala{{\mathcal  A}}
\def\calb{{\mathcal B}}
\def\call{{\mathcal L}}
\def\calp{{\mathcal P}}
\def\calq{{\mathcal Q}}
\def\calt{{\mathcal T}}
\def\calu{{\mathcal U}}
\def\cR{{\mathcal R}}
\def\D{\mathcal{D}}
\def\eps{\varepsilon}
\def\Om{{\Omega}}
\def\om{{\omega}}
\def\mub{\bar{\mu}}
\def\zetab{{\bar\zeta}}
\def\1B{{\bf  1}}
\def\argmin{\mathop{\rm argmin}}
\def\dist{\mathop{\rm dist}}
\def\dom{\mathop{{\rm dom}}}
\def\intt{\mathop{\rm int}}
\newcommand\Lip{\mathop{\rm Lip}}
\def\range{\mathop{\rm Im}}
\def\supp{\mathop{\rm supp}}
\def\Ker{\mathop{\rm Ker}}
\def\Min{\mathop{\rm Min}}
\def\half{\mbox{$\frac{1}{2}$}}
\def\1B{{\bf  1}}
\newcommand{\NN}{\mathbb{N}}
\newcommand{\RR}{\mathbb{R}}
\def\cN{\mathbb{N}}
\def\cR{\mathbb{R}}
\newcommand\be{\begin{equation}}
\newcommand\ee{\end{equation}}
\newcommand\ba{\begin{array}}
\newcommand\ea{\end{array}}
\newcommand{\bea}{\begin{eqnarray}}
\newcommand{\eea}{\end{eqnarray}}
\newcommand{\bean}{\begin{eqnarray*}}
\newcommand{\eean}{\end{eqnarray*}}
\def\rar{\rightarrow}
\def\ds{\displaystyle}
\def\disp{\displaystyle}
\def\la{\langle}
\def\ra{\rangle}
\title[State-constrained control-affine parabolic problems]
{State-constrained control-affine parabolic problems I: First and Second order necessary optimality conditions}
\author{M. Soledad Aronna}
\address{EMAp/FGV, Rio de Janeiro 22250-900, Brazil}
\email{soledad.aronna@fgv.br}
\author{Fr\'ed\'eric Bonnans}
\address{INRIA-Saclay and Centre de 
Math\'ematiques Appliqu\'ees, Ecole Polytechnique, 91128 Palaiseau, France}
\email{Frederic.Bonnans@inria.fr}  
\author{Axel Kr\"oner}
\address{Weierstrass Institute for Applied Analysis and Stochastics, 10117 Berlin, Germany}
\email{axel.kroener@wias-berlin.de}  
\thanks{The first author was supported by FAPERJ, CNPq and CAPES (Brazil) and by the Alexander von Humboldt Foundation (Germany). The
second author thanks the `Laboratoire de Finance pour les Marchés de l’Energie’ for its support. The
second and third authors were supported by a public grant as part of the Investissement d’avenir
project, reference ANR-11-LABX-0056-LMH, LabEx LMH, in a joint call with Gaspard Monge Program for optimization, operations research and their interactions with data sciences.\\
 This is the first part of a work on optimality conditions for a control problem of a semilinear heat equation. More precisely, the full version, available at \href{arXiv:1906.00237v1}{https://arxiv.org/abs/1906.00237v1}, has been divided in two, resulting in the current manuscript (that corresponds to Part I) and \href{arXiv:1909.05056}{https://arxiv.org/abs/1909.05056} (which is Part II)}
\begin{document}

\maketitle


\begin{abstract}
In this paper we consider an optimal control problem governed by a
  semilinear heat equation with bilinear control-state terms and
  subject to control and state constraints. The state constraints are
  of integral type, the integral being with respect to the space
  variable. The control is multidimensional. The cost functional is of
  a tracking type and contains a linear term in the control variables. We derive second order necessary 
  conditions relying on the concept of alternative costates and quasi-radial critical directions.
  The appendix provides an example illustrating the applicability of our results.
\end{abstract}

\vspace{5mm}

{\sc\small Keywords}: \keywords{\small
optimal control of partial differential equations, semilinear parabolic equations, state constraints, second order analysis, control-affine problems} 


\section{Introduction}
This is the first part of two papers on necessary and sufficient optimality conditions for an optimal control problem governed by a semilinear heat equation containing bilinear terms coupling the control and the state, and subject to constraints on the control and state.
The control may have several components and enters in an affine way in the cost. 
In this first part we derive necessary optimality conditions of first and second order, in the second part \cite{ABK-PartII}  sufficient optimality conditions are shown.

\if{We extend techniques that were recently established in the following articles.
Aronna, Bonnans, Dmitruk and Lotito~\cite{ABDL12} obtained second order necessary and sufficient conditions for bang-singular solutions of control-affine finite dimensional systems with control bounds, results that were extended in Aronna, Bonnans and Goh \cite{MR3555384}  when adding a state constraint of inequality type.
An extension of the analysis in \cite{ABDL12} to the infinite dimensional setting was done by Bonnans \cite{BonnansA13}, for a problem concerning a semilinear heat equation subject to control bounds and without state constraints. For a quite general class of linear differential equations in Banach spaces with bilinear control-state couplings and subject to control bounds,  Aronna, Bonnans and Kr\"oner~\cite{MR3767765PlusErratum} provided second order conditions, that extended later to the complex Banach space setting \cite{ABK19}.}\fi

In the context of second order conditions for problems governed by control-affine ordinary differential equations we can mention several works, starting with the early papers \cite{MR0205719} by Goh  and \cite{Kel64} by Kelley, later \cite{Dmi77} by Dmitruk, and recently  \cite{ABDL12}.
In this context, the case dealing with both control and state constraints was treated in e.g. Maurer \cite{MR0464007}, McDanell and Powers \cite{McDaPow71}, Maurer, Kim and Vossen \cite{MaurerKimVossen2005}, Sch\"attler \cite{MR2253361}, and Aronna {\em et al.} \cite{MR3555384}. Fore a more detailed description of the contributions in this framework, we refer to \cite{MR3555384}.

In the infinite dimensional case, the issue of second order
  conditions for problems governed by elliptic equations and assuming state constraints was treated by
  several authors, see e.g. 
  Casas, Tr\"oltzsch and Unger \cite{CasTroUn96}, Bonnans~\cite{MR1646703}, Casas, Mateos and Tr\"oltzsch~\cite{MR2160878} and   Casas and Tr\"oltzsch \cite{MR2674627}.

  \if{In the elliptic framework, regarding the case we investigate here, this is, when no quadratic control term is present in the cost (or what some authors call {\em vanishing Tikhonov term}), 
 Casas in \cite{MR2974742} proved second order sufficient conditions for bang-bang optimal controls of a semilinear equation, and for one containing a bilinear coupling of control and state in the recent joint work with D. and G. Wachsmuth \cite{MR3878305}. 
}\fi

  Parabolic optimal control problems with state constraints were discussed in several articles.
  For a semilinear equation in the presence of pure-state
constraints, Raymond and Tr\"oltzsch \cite{MR1739375}, and Krumbiegel and Rehberg~\cite{MR3032877}
obtained second order sufficient conditions. 
Casas, de Los Reyes, and Tr\"oltzsch~\cite{CasReyTro08} and de Los
Reyes, Merino, Rehberg and Tr\"oltzsch~\cite{RMRT08} proved
sufficient second order conditions for  semilinear equations,
both in the elliptic and parabolic cases. The articles mentioned in this paragraph did not consider bilinear terms as we do in the current work.

Further details regarding the existing results on second order analysis of control-affine state-constrained problems are given in the second part \cite{ABK-PartII} of this research.






The contribution of this paper are first and second order necessary  optimality conditions for an optimal control problem for a
semilinear parabolic equation with cubic nonlinearity, several
controls coupled with the state variable through bilinear
  terms, pointwise control constraints and state constraints
  that are integral in space.
To incorporate the state constraints we use the concept of {\em alternative costates} (see Bonnans and Jaisson \cite{MR2683898}) and the concept of quasi-radial directions (see Bonnans and Shapiro \cite{MR1756264} and Aronna, Bonnans and Goh \cite{MR3555384}). 

The paper is organized as follows. 
 In Section~\ref{sec:1} the problem is stated and main assumptions are formulated. In Section~\ref{sec:2} first order analysis is done. Section~\ref{sec:3} is devoted to second order necessary conditions. 
 Finally, in the appendix, we give an example satisfying the hypotheses of our main results.

\subsection*{Notation} 
Let $\Om$ be an open and bounded subset of $\cR^n,$ $n\leq 3$, 
with $C^\infty$ boundary $\partial\Om$.
Given $p \in [1,\infty]$ and $k\in \NN$, let $W^{k,p}(\Omega)$ be
the Sobolev space of functions in $L^p(\Omega)$ with 
derivatives (here and after, derivatives w.r.t. $x\in\Om$ or w.r.t. time
are taken in the sense of distributions)
 in $L^p(\Omega)$ up to order $k.$ Let $\D(\Omega)$ be the set of $C^\infty$ functions with compact support in $\Om$.
By $W^{k,p}_0(\Omega)$ we denote the closure of 
$\D(\Omega)$ with respect to the $W^{k,p}$-topology.  Given a horizon $T>0$, we write $Q := \Om\times (0,T)$. $\norm{\cdot}{p}$ denotes the norm in $L^p(0,T),$ $L^p(\Omega)$ and $L^p(Q)$, indistinctively. When a function depends on both space and time, but the norm is computed only with respect to one of these variables, we specify both the space and domain. For example, if $y\in L^p(Q)$ and we fix $t\in (0,T),$ we write $\|y(\cdot,t)\|_{L^p(\Omega)}$.
For the $p$-norm in $\RR^m,$ for $m\in \NN,$ we use $|\cdot|_p$, for the Euclidean norm we omit the index. 
We set $H^k(\Omega):= W^{k,2}(\Omega)$ and $H^k_0(\Omega):=
W_0^{k,2}(\Omega)$, with dual denoted by $H^{-k}(\Om)$.
By $W^{2,1,p}(Q)$ we mean the Sobolev space of $L^p(Q)$-functions
whose second derivative in space and first derivative in time belong
to $L^p(Q)$. For $p > n+1$, we denote by $Y_p$  the set of elements of
$W^{2,1,p}(Q)$ with zero trace on $\Sigma$,
and by $Y^0_p$ its trace at time zero.
We write $H^{2,1}(Q)$ for $W^{2,1,2}(Q)$ and, setting 
$\Sigma:= \partial\Om\times (0,T),$ we define the state space as
\be
Y := \{ y\in H^{2,1}(Q); \; y=0  \text{ a.e. on $\Sigma$} \}.
\ee
The latter is continuously embedded in 
\be 
W(0,T) := \{ y\in L^{2}(0,T; H^1_0(\Om)); \; \dot y \in L^{2} (0,T; H^{-1}(\Om)) \}. 
\ee
Note that if $y$ is a function over $Q$, we use $\dot y$ to denote its time
derivative in the sense of distributions. As usual we denote the spatial gradient and the Laplacian by $\nabla$ and $\Delta$. By $\operatorname{dist}(t,I):=\inf \{\norm{t-\bar t}\;;\; \tb \in I \}$ for $I\subset \RR$, we denote the distance of $t$ to the set $I$.

\section{Statement of the problem and main assumptions}\label{sec:1}

In this section we introduce the optimal control problem we deal with and we show well-posedness of the state equation and existence of solutions of the optimal control problem.

\subsection{Setting}
Consider the {\em state equation}
\be
\label{dynamics}
\left\{
\begin{split}
&\dot y(x,t) - \Delta y(x,t) +\gamma y^3(x,t) = 
f (x,t) + y(x,t) \sum_{i=0}^m u_i(t) b_i(x)\quad \text{in } Q,\\
&y=0\,\, \text{ on } \Sigma,\quad 
y(\cdot,0) = y_0\,\, \text{in } \Om,
\end{split}
\right.
\ee
 and
\be
\label{HypSpaces1}
y_0\in H^1_0(\Om),\quad f\in L^2(Q),\quad b\in L^{\infty}(\Om)^{m+1},
\ee 
$\gamma \geq 0$, $u_0\equiv1$ is a constant, and 
$u:=(u_1,\ldots,u_m) \in L^2(0,T)^m$.
Lemma \ref{state-equ-estimA.l} below shows that for each control $u\in \Uspace,$  there is a unique associated solution
$y\in Y$  of 
\eqref{dynamics}, called the 
{\em associated state}. Let $y[u]$ denote this solution.
We consider control constraints of the form
$u\in \Uad$, where 
\be
\text{$\Uad$ is a nonempty, closed convex subset of 
$L^2(0,T)^m$.}
\ee
In some statements, we will consider a specific form of $\Uad$ (see \eqref{HypUad} below).
In addition, we have finitely many linear running state constraints of the 
form
\be
\label{stateconstraint}
g_j(y(\cdot,t)):=\int_\Omega c_j(x)y(x,t){\rm d}x+d_j \leq 0,
\quad \text{for } t\in [0,T],\;\;  j=1,\dots,q,
\ee
where $c_j\in H^2(\Om) \cap H^1_0(\Om)$ 
for $j=1,\dots,q$, and $d\in \cR^q$. The $H^1_0(\Omega)$ regularity of $c$ is used in Lemma \ref{costate-eq-reg.l} to derive regularity results for the adjoint state and the $H^2(\Omega)$ regularity in Proposition \ref{prop-state-const} for results on the Lagrange multiplier associated with the state constraint.

We call any $(u,y[u])\in L^2(0,T)^m \times Y$ a {\em trajectory}, and if it additionally  satisfies the control and state constraints, we say it is an {\em admissible trajectory.}
The {\em cost function} is
\be
\label{cost}
\begin{split}
J(u,y) : = &\half \int_Q (y(x,t)-y_d(x))^2 {\rm d} x {\rm d} t 
\\
&+ \half \int_\Omega (y(x,T)-y_{dT} (x))^2 {\rm d} x 
+  \sum_{i=1}^m \alpha_i \int_0^T u_i(t) {\rm d} t,
\end{split}
\ee
where 
\be
\label{HypSpaces2}
y_d \in L^2(Q),\quad y_{dT}\in H^1_0(\Om),
\ee
and $\alpha \in \RR^m$.
We consider the optimal control  problem 
\be\label{P}\tag{P}
\Min_{u\in \Uad}  J(u,y[u]); \quad
\text{subject to \eqref{stateconstraint}}.  
\ee

For problem \eqref{P} we consider the two types of solution given next. 

\begin{dfn} 
Let $\ub \in \Uad$.
We say that $(\ub,y[\ub])$ is an  {\em $L^2$-local solution} (resp.,  {\em $L^\infty$-local solution}) if there exists $\eps>0$ such that $(\ub,y[\ub])$ is a minimum among the admissible trajectories $(u,y)$ that satisfy $\|u-\ub\|_2<\eps$  (resp., $\|u-\ub\|_\infty<\eps$).
\end{dfn}

\subsection{Well-posedness of the state equation}
Here we study the state equation and analyze, by means of the Implicit Function Theorem, the {\em control-to-state mapping}, i.e. the mapping that associates to each control, the corresponding solution of the state equation. We start by the following easily checked technical result.

\begin{lem}
\label{lem-uby}
For $i=0,\dots,m$,
the mapping defined on $L^2(0,T)\times L^\infty(\Om)\times L^\infty(0,T;L^2(\Om))$, given by
$(u_i,b_i,y)\mapsto u_ib_iy,$ has image in $L^2(Q)$, is of class $C^\infty$,
and satisfies 
\be
\label{lem-uby1}
\| u_ib_i y\|_{2} 
\leq
\|u_i \|_{2} \|b_i \|_{\infty} \|y\|_{L^\infty(0,T;L^2(\Om))}.
\ee
\end{lem}

\if{
\begin{proof}
Indeed, by H\"older's inequality
{XXX do you agree for deleting this sentence ? }
 (using 1/2 = 1/3 + 1/6), for each $i=1,\dots,m,$
\be
\int_Q u^2_i b_i^2y^2 \dd x \dd t
= \disp
\int_0^T u^2_i(t) 
\| b_iy(\cdot,t) \|^2_{L^2(\Om)} \dd t
\leq \disp
\left( \int_0^T u^2_i \dd t\right) 
\| b_i y\|^2_{L^\infty(0,T;L^2(\Om))}.
\ee
This clearly implies \eqref{lem-uby1}.
The conclusion easily follows.
\end{proof}
}\fi

A uniqueness and existence result, and {\em a priori} estimates for the state follows.
\begin{lem}
\label{state-equ-estimA.l}
The state equation \eqref{dynamics}
has a unique solution 
$y=y[u,y_0,f]$ in $Y$. The mapping $(u,y_0,f) \mapsto  y[\bluet{u,y_0,f}]$ is 
$C^\infty$ from $L^2(0,T)^m  \times H^1_0(\Om) \times L^2(Q)$ to $Y$,
and nondecreasing w.r.t. $y_0$ and $f$.
In addition, there exist functions $C_i$, $i=1$ to 2,
not decreasing w.r.t. each component, such that 
\begin{gather}
\label{state-equ-estimA.l-0}
\| y \|_{ L^\infty (0,T;L^2(\Om) ) } +
\| \nabla y \|_2 \leq 
C_1( \|y_0\|_2, \| f \|_2, \|u\|_2 \| b \|_\infty),
\\ 
\label{state-equ-estimA.l-1}
\|y\|_Y \leq 
C_2( \|y_0\|_{ H^1_0(\Om) }, \| f \|_2, \|u\|_2\| b \|_\infty).
\end{gather} %
Moreover, the state $y$ also belongs to $C([0,T];H^1_0(\Omega))$, since $Y$ is continuously embedded in that space \cite[Theorem 3.1, p.23]{LioMag68a}.
\end{lem}

In the proof that follows, we use several times the (continuous) Sobolev  inclusion
\be
\label{sobolev-l6}
H^1_0(\Om) \subset L^6(\Om), \quad \text{when $n \leq 3$.}
\ee

\begin{proof}
(i)
Observe first that 
by the standard Sobolev inclusions and Lemma~\ref{lem-uby},
any $y\in Y$ is such that
$y^3$ and $y\sum_{i=0}^m u_i b_i$ belong to $L^2(Q)$.
So, $\dot y - \Delta y \in L^2(Q)$ 
and, therefore, the notion of solution of 
the state equation in $Y$ is clear.
We could as well define a solution in
$W(0,T)$
but since by \eqref{sobolev-l6},
for $n \leq 3$, 
$W(0,T) \subset L^2(0,T; L^6(\Om)),$
and the compatibility condition 
(equality between the trace of the initial condition on
$\partial\Om$ and the Dirichlet condition on $\Sigma$) holds,
it follows then
that any solution in $W(0,T)$ is a solution in $Y$.
\\ (ii)
We establish the  {\em a priori} estimates
\eqref{state-equ-estimA.l-0}-\eqref{state-equ-estimA.l-1}.
Multiplying the state equation by $y$ and integrating over $\Om$,
we get 
\be
\ba{lll} \displaystyle
\half \ddt \int_\Om y(x,t)^2 \dd x + \int_\Om | \nabla y(x,t) |^2 \dd x 
+  \gamma \int_\Om y(x,t)^4 \dd x 
\vspace{1mm}\\ \displaystyle
\hspace{15mm}
\leq \half \int_\Om f(x,t)^2 \dd x + 
(\half + | u(t) |_1 \|b\|_\infty ) \int_\Om y(x,t)^2 \dd x.
\ea\ee
In particular, $\eta(t) := \int_\Om y(x,t)^2 \dd x$
satisfies
\be
\ba{lll} \displaystyle
\dot \eta(t) 
\leq  \int_\Om f(x,t)^2 \dd x + 
(1 + 2 | u(t) |_1 \|b\|_\infty ) \eta(t).
\ea\ee
By Gronwall's Lemma:
\be
\| \eta \|_\infty \leq 
\left( \| y_0\|^2_2 + \|f\|^2_2 \right) e^{  T +2 \|u\|_1 \|b\|_\infty}
\ee
and then \eqref{state-equ-estimA.l-0} easily follows.

Now multiplying the state equation by $\dot y$ we get,
for all $\eps>0$, 
\be
\ba{lll} \displaystyle
\int_\Om \dot y(x,t)^2 \dd x + 
\half \ddt \int_\Om | \nabla y(x,t) |^2 \dd x 
+  \frac \gamma 4 \ddt \int_\Om y(x,t)^4 \dd x 
\vspace{1mm}\\ \displaystyle
\hspace{15mm}
\leq \frac 1 \eps \int_\Om f(x,t)^2 \dd x + \frac 1 \eps \blue{ |u(t)|^2} \|b\|^2_\infty 
\int_\Om y(x,t)^2 \dd x
+
\frac \eps 2\int_\Om \dot y(x,t)^2 \dd x.
\ea\ee
Choosing $\eps=1$ we get, after cancellation,
\be\ba{lll} \displaystyle
\int_\Om \dot y(x,t)^2 \dd x + 
\ddt \int_\Om | \nabla y(x,t) |^2 \dd x 
+  \frac \gamma 2 \ddt \int_\Om y(x,t)^4 \dd x 
\vspace{1mm}\\ \displaystyle
\hspace{15mm}
\leq 2 \int_\Om f(x,t)^2 \dd x + 2 \blue{ |u(t)|^2} \|b\|^2_\infty 
\int_\Om  y(x,t)^2 \dd x.
\ea\ee
For $\tau\in [0,T)$, \blue{integrating from 
$0$ to $\tau$,} and using 
\eqref{sobolev-l6},
we obtain that
\be\label{state-equ-estimA.l-01}
\| y \|_{ H^1 (0,T;L^2(\Om) ) } +
\| \nabla y \|_{ L^\infty(0,T;L^2(\Om) ) }  \leq 
C_2( \blue{\|y_0\|_{H^1_0(\Om)}}, \| f \|_2, \|u\|_2 \| b \|_\infty).
\ee
We easily deduce \eqref{state-equ-estimA.l-1}
since we can estimate
$\|\Delta y \|_{L^2(Q)}$ and, therefore, also
$\| y\|_{L^2(0,T; H^2(\Om))}$
with the previous relations.
\\ (iii)
We construct a sequence $y_k$ of Galerkin approximations 
for which estimates analogous to
\eqref{state-equ-estimA.l-0} 
hold.
Some subsequence
weakly converges in $W(0,T)$ to some $y$ 
and is such that the sequence $y^3_k$,
bounded in $L^2(Q)$, weakly converges in this space.
By the Aubin-Lions lemma 
\cite{MR0152860}, the injection of
$W(0,T)$ into $L^2(Q)$ is compact.
So (extracting again a subsequence if necessary),
$y^3_k$ converges a.e. to $y^3$.
By Lions \cite[Lem. 1.3, p. 12]{MR0259693}, the weak limit of $y^3_k$ is $y^3$,
and $y$ is therefore solution of the state equation.
\\ (iv)
The $C^\infty$ regularity of
$y[u,y_0,f]$ is a consequence of the Implicit Function Theorem.
In fact, let $Y^0$ denote the trace at time 0 of elements of $Y$,
which with the trace norm is a Banach space containing
$H^1_0(\Om)$. Then the mapping  
$F:L^2(0,T)\times Y \times Y^0 \times L^2(Q) 
\rar L^2(Q) \times  Y^0$
defined by
\be
F(u,y,y_0,f) :=
\Big( \dot y - \Delta y + \gamma y^3 - y \sum_{i=1}^m u_i b_i, y(0)-y_0\Big), 
\ee
is of class $C^\infty$.
That the linearized  mapping $D_y F$ is bijective follows from results already shown in this proof. 
\\ (v)
Uniqueness follows from the monotonicity w.r.t. $(y_0,f)$,
that we prove as follows.
Consider the difference $z:=y_2 - y_1$ 
of two solutions $y_1$ and $y_2$ of \eqref{dynamics}, with data 
$(y_{01},f_1) \leq (y_{02},f_2)$, resp.
By the Mean Value Theorem, $z$ is solution of 
\be
\dot z - \Delta z + \blue{z\sum_{i=1}^m u_ib_i} + 3 \gamma \yh^2 z = \blue{\tilde f};
\quad 
z(0) = \blue{\tilde y_0}
\ee
where $\yh\in [y_1,y_2]$ a.e., 
$\blue{\tilde y_0}:= y_{02}-y_{01} \leq 0$ and 
 \blue{ $\tilde{f}:= f_2-f_1 \leq 0$.}
\blue{Testing the equation with $z_+:=\max(z,0)$
we get that $\nu(t) := \int_{\Om} z^2_+$ satisfies
\be
\half \dot \nu - |u(t)| \|b\|_\infty \nu(t) \leq
\half \dot \nu + \int_\Om z^2_+ \sum_{i=1}^m u_ib_i 
\le \int_\Om \tilde{f} z_+ \leq 0
\ee
and applying Gronwall's inequality we obtain that $z_+=0$.}
\finsquare

\if{
\begin{rem}
We can similarly obtain, for any dimension $n\in \cN,$ the estimates
\eqref{state-equ-estimA.l-1} and \eqref{state-equ-estim.l-2}
but with $\|y_0\|^4_{4} $
instead of $\|y_0\|^4_{H^1_0(\Om)}$.
\end{rem}
}\fi

In the analysis that follows, we fix a trajectory 
$(\ub,\yb=y[\ub]).$

For this trajectory $(\ub,\yb)$, let us consider the  linear continuous operator $A$
from $L^2(0,T; H^2(\Om))$ to $L^2(Q)$ such that, for each $z\in Y$ and $(x,t) \in Q,$
\be
\label{lin-state-equ}
(Az)(x, t) :=   -\Delta z(x,t) +3\gamma \yb(x,t)^2 z(x,t)- \sum_{i=0}^m \ub_i(t) b_i(x) z(x,t).
\ee

\begin{lem}
\label{LemmaEstimatez}
For any 
$\fb \in L^2(Q)$,
 the equation
\be
\label{lineq}
\left\{
\begin{split}
& \dot z + A z  = \fb,\quad \text{in } Q,\\
& z=0\,\, \text{on } \Sigma,\quad z(x,0) = 0\,\, \text{in } \Omega,
\end{split}
\right.
\ee 
has a unique solution $z\in Y$ that verifies
\be
\label{estimatez}
\|z\|_{L^\infty(0,T;L^2(\Omega))} \leq e^{{\half T +\sum_{i=0}^m\|\ub_i\|_1 \|b_i\|_\infty}} \|\bar f\|_{L^2(0,T;L^2(\Omega))}.
\ee
\end{lem}

\begin{proof} \blue{We follow the same method used in Lemma \ref{state-equ-estimA.l}.
Multiplying \eqref{lineq} by $z(x,t)$}
and integrating over space we obtain that for a.a. $t\in (0,T)$
\be
\label{lin-se-est}
\begin{split}
\half \frac{\dd}{\dd t} \norm{z(\cdot,t)}{L^2(\Om)}^2 &+  \norm{\nabla z(\cdot,t)}{L^2(\Om)}^2 +
3 \gamma \| \yb(\cdot,t) z(\cdot,t)\|_{L^2(\Om)}^2
\\
&
=\int_\Om  z(x,t) \left( \fb(x,t) +{\sum_{i=0}^m \ub_i(t)\cdot b_i(x)} z(x,t)\right)\dd x.
\end{split}
\ee
The r.h.s. of \eqref{lin-se-est} can be bounded above by 
\be
\|\fb(\cdot,t)\|^2_{L^2(\Om)} 
+
\left( \half + 
{\sum_{i=0}^m |\ub_i| \| b_i\|_\infty} \right)
\|z(\cdot,t)\|^2_{L^2(\Om)}.
\ee 
Then we deduce the estimate \eqref{estimatez} with Gronwall's Lemma.
\finsquare

\subsection{Existence of solution of the optimal control problem}

In order to study the existence of local solutions, we need to establish
the sequential weak continuity of the control-to-state mapping. 
We use '$\rightharpoonup$'
to denote the weak convergence of a sequence, the space being indicated in each case.
We need the following result (see~
\cite[p. 14]{MR712486}): 
\be
\label{comp-inj-10}
\left\{ \ba{lll}
\text{For any $p\in [1,10)$, the following injection is compact:}
\\
Y \hookrightarrow L^p(0,T; L^{10}(\Om)), \;\; \text{when $n\leq 3$.}
\ea\right.
\ee 
\if{\be
\label{dual-lp}
\left\{\ba{lll}
\text{If $W$ is a reflexive Banach space and $p\in (1,\infty)$,}
\\
\text{the dual of $L^p(0,T;W)$ is $L^q(0,T;W^*)$, where $1/p+1/q=1$}.
\ea\right.
\ee
}\fi

\begin{lem}
\label{lem-weak-cv}
The mapping $u\mapsto y[u]$ is 
sequentially weakly  continuous from 
$\Uspace$ into $Y$.
\end{lem}

\begin{proof}
Taking $u_{\ell} \rightharpoonup \ub$ in $\Uspace$, we shall prove that $y_{\ell}\rightharpoonup \yb$ in $Y$, where $y_{\ell}:=y[u_{\ell}]$, $\yb:=y[\ub].$ We know that it is enough to check that any subsequence of  $y_{\ell}$ weakly converges to $\yb$  in $Y$. To do this, we prove that we can pass to the limit in each term of the state equation.

(a) \blue{We know by  Lemma~\ref{state-equ-estimA.l}
that $y_\ell$ is bounded in $Y$, so 
extracting a subsequence if necessary, we may assume  that it weakly converges in $Y$
to some $\yh$.
By \eqref{comp-inj-10}, $y_{\ell} \rar \yh$ in $L^6(Q)$ and, therefore, 
maybe for a subsequence, it converges almost everywhere in $Q$.}

\blue{ Let $\nu \in [2,5]$ be integer.
Set $\sigma := 6/\nu$.
By the mean value theorem, 
$y_\ell^\nu - \yh^\nu = \nu \yt_\ell^{\nu-1} (y_\ell - \yh)$, 
with 
$ \yt_\ell(x,t) \in [y_\ell(x,t),\yh(x,t)]$ a.e.
Obviously $\yt_\ell$ is measurable and bounded in
$L^6(Q)$.
By H\"older's inequality, with
$p= \nu/(\nu-1)$
and 
$q=6/\sigma=\nu$
(note that $1/p+1/q=1$),
we get
\be
\ba{lll}
\frac 1 {\nu^\sigma}  
\| y_\ell^\nu - \yh^\nu \|^\sigma_\sigma
= 
\int_Q \yt_\ell^{\sigma(\nu-1)} (y_\ell-\yh)^\sigma \dd x \dd t
&\leq & 
\| \yt_\ell^{\sigma(\nu-1)} \|_p \| (y_\ell - \yh)^\sigma \|_q
\\&=&
\| \yt_\ell\|^{\sigma(\nu-1)}_6  \| y_\ell - \yh \|_6^\sigma.
\ea
\ee
Therefore, $y_\ell^\nu \rar \yh^\nu$ in $L^\sigma(Q)$. 
Taking $\nu=3$ we get the desired result.
} 

\blue{
(b) 
We claim that $u_\ell y_\ell b $
weakly converges in $L^2(Q)$ to $\ub \yh b$.
It is enough to get the result when $m=1$.
Fix $\varphi$ in $L^{\infty}(Q)$.
By Lemma \ref{lem-uby}, $u_\ell y_\ell$
is bounded in $L^2(Q)$
and has therefore (up to a subsequence)
a weak limit $w$ in that space.
Since $y_\ell\rar\yh$ in $L^6(Q)$,
$\int_Q u_\ell (y_\ell - \yh) b \varphi\rar 0$.
On the other hand
$\int_Q u_\ell \yh b \varphi \rar \int_Q \ub \yh b \varphi$
since $\yh b \varphi \in L^2(Q)$. 
Therefore 
$\int_Q u_\ell y_\ell b \varphi \rar \int_Q \ub \yh b \varphi$.
Since $L^\infty(Q)$ is a dense subset of $L^2(Q)$. 
The claim follows.}

\blue{
By steps (a)-(b), we can pass to the limit in the 
weak formulation, and obtain (due to the uniqueness of solution) that $\yh=\yb$. 
The conclusion follows.}
\end{proof} 

\begin{theorem}
{\rm (i)}
The function $u\mapsto J(u,y[u]),$ from $L^2(0,T)^m$ to $\RR$,
is weakly sequentially l.s.c.
{\rm (ii)}
The set of solutions of the optimal control problem 
\eqref{P} 
is weakly sequentially closed in $L^2(0,T)^m.$
{\rm (iii)}
If \eqref{P} has a bounded minimizing sequence, 
the set of solutions of \eqref{P} 
is non empty. This is the case in particular if 
\eqref{P} is admissible and $\Uad$ is a nonempty, bounded subset of $L^2(0,T)^m$.
\end{theorem}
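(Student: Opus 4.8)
The three assertions are the standard ingredients of the direct method, and the hard analytic work has already been done in Lemma~\ref{lem-weak-cv} (sequential weak continuity of $u\mapsto y[u]$). I would organize the proof as follows.

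\textbf{(i) Weak sequential lower semicontinuity of the cost.} Take $u_\ell \rightharpoonup \ub$ in $L^2(0,T)^m$ and set $y_\ell := y[u_\ell]$, $\yb := y[\ub]$. By Lemma~\ref{lem-weak-cv}, $y_\ell \rightharpoonup \yb$ in $Y$; in particular, by the compact injection \eqref{comp-inj-10}, $y_\ell \to \yb$ strongly in $L^2(Q)$, so the first (tracking) term of $J$ converges. For the terminal term, the embedding $Y \hookrightarrow C([0,T];H^1_0(\Om))$ together with the weak convergence in $Y$ gives $y_\ell(\cdot,T) \rightharpoonup \yb(\cdot,T)$ in $H^1_0(\Om)$, hence (by compactness of $H^1_0 \hookrightarrow L^2$) $y_\ell(\cdot,T) \to \yb(\cdot,T)$ in $L^2(\Om)$, so that term converges as well. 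The last term $\sum_i \alpha_i \int_0^T u_{\ell i}$ is linear and continuous on $L^2(0,T)^m$, hence weakly continuous. Thus $J(u_\ell,y[u_\ell]) \to J(\ub,y[\ub])$; in fact we get full weak continuity, which is stronger than the claimed l.s.c.

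\textbf{(ii) Weak sequential closedness of the solution set.} Let $(u_\ell)$ be a sequence of solutions of \eqref{P} with $u_\ell \rightharpoonup \ub$. Since $\Uad$ is closed and convex, it is weakly closed, so $\ub \in \Uad$. The state constraint \eqref{stateconstraint} passes to the limit: for each fixed $t$, $g_j(y_\ell(\cdot,t)) = \int_\Om c_j y_\ell(x,t)\,\dd x + d_j$, and since $y_\ell(\cdot,t) \to \yb(\cdot,t)$ in $L^2(\Om)$ (again via the $C([0,T];L^2)$-type convergence extracted from weak convergence in $Y$ and compactness), $g_j(y_\ell(\cdot,t)) \to g_j(\yb(\cdot,t)) \le 0$; so $\ub$ is admissible. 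Denote by $v^* := \inf \eqref{P}$ the optimal value. Each $u_\ell$ achieves $J(u_\ell,y[u_\ell]) = v^*$, and by part (i) $J(\ub,y[\ub]) = \lim_\ell J(u_\ell,y[u_\ell]) = v^*$; since $\ub$ is admissible, $\ub$ is a solution. Hence the solution set is weakly sequentially closed.

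\textbf{(iii) Existence.} Suppose \eqref{P} admits a bounded minimizing sequence $(u_\ell) \subset \Uad$, admissible, with $J(u_\ell,y[u_\ell]) \to v^* = \inf \eqref{P}$. Boundedness in the Hilbert space $L^2(0,T)^m$ gives a weakly convergent subsequence $u_\ell \rightharpoonup \ub$. As in (ii), $\ub \in \Uad$ and $\ub$ satisfies the state constraints, so $\ub$ is admissible; and by (i), $J(\ub,y[\ub]) = \lim J(u_\ell,y[u_\ell]) = v^*$, so $\ub$ is a solution and the solution set is nonempty. The particular case is immediate: if $\Uad$ is nonempty and bounded and \eqref{P} is admissible, then \emph{every} minimizing sequence can be taken in $\Uad$, hence is bounded, and $v^* < +\infty$ by admissibility.

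\textbf{Main obstacle.} There is essentially no deep obstacle: the only point requiring a little care is justifying the convergences of the two quadratic terms in $J$ and of the state-constraint functionals from mere weak convergence $y_\ell \rightharpoonup \yb$ in $Y$ — this is where the compact embedding \eqref{comp-inj-10} and the continuous embedding $Y \hookrightarrow C([0,T];H^1_0(\Om))$ are used. Once those are in place, parts (i)--(iii) are routine applications of the direct method, with weak closedness of $\Uad$ following from convexity.
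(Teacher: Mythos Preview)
Your proposal is correct and follows essentially the same direct-method structure as the paper, relying on Lemma~\ref{lem-weak-cv} for the weak sequential continuity of the control-to-state map. The only notable difference is in part~(i): the paper simply observes that $J$ is convex and continuous on $L^2(0,T)^m\times Y$ and hence weakly sequentially l.s.c., whereas you use the compact embeddings to upgrade the weak convergence of $y_\ell$ to strong convergence in $L^2(Q)$ and of $y_\ell(\cdot,T)$ in $L^2(\Om)$, thereby obtaining full weak sequential \emph{continuity} of $u\mapsto J(u,y[u])$, which is stronger than needed; your version also spells out the admissibility check in~(ii) (weak closedness of $\Uad$ and passage to the limit in the state constraints), which the paper's proof leaves implicit.
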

\begin{proof}
(i) 
Combine Lemma \ref{lem-weak-cv}
and the fact that the cost function $J$
is  continuous and convex on
$\Uspace \times Y$, hence
it is also weakly lower semicontinuous over this product space. 
\\ (ii)
Let
 $(u_{\ell}) \subset L^2(0,T)^m$
be a sequence of solutions weakly converging to $\ub \in L^2(0,T)^m,$
with associated states $y_{\ell}$.
By  Lemma \ref{lem-weak-cv}, 
$(y_{\ell})$ weakly converge in $Y$ to the state $\yb$
associated with $\ub$ and, by point (i),
$J(\ub,\yb) \leq \liminf_{\ell} J(u_{\ell},y_{\ell})$. 
This lower limit being nothing but the value of
problem \eqref{P},
the conclusion follows. 
\\  (iii)
By the previous arguments,
a weak limit of a minimizing sequence
is a solution of~\eqref{P}.
This weak limit exists iff the sequence is bounded.
This concludes the proof. 
\finsquare



\section{First order analysis}\label{sec:2}
In this section we state first order necessary optimality conditions. More precisely, we introduce the adjoint equation, and define and prove existence of associated Lagrange multipliers.

Throughout the section, $(\ub,\yb)$ is a trajectory of problem \eqref{P}.
 We recall the hypotheses \eqref{HypSpaces1}, \eqref{HypSpaces2} 
on the data, and the definition of the operator $A$ given in~\eqref{lin-state-equ}.

\subsection{Linearized state equation and costate equation}
The {\em linearized state equation} at $(\ub,\yb)$ is given by
\be
\label{lineq2}
\left\{
\begin{split}
& \dot z + A z  =  \sum_{i=1}^m v_i b_i \yb\quad \text{in } Q;\\
& z=0\,\, \text{on } \Sigma,\quad 
z(\cdot,0) = 0\,\, \text{on } \Omega,
\end{split}
\right.
\ee 
For $v\in \Uspace$, equation \eqref{lineq2} above possesses a unique solution 
$z[v]\in Y$ 
(as follows from Lemma \ref{LemmaEstimatez}), and the mapping $v \mapsto z[v]$ is
linear and continuous from $\Uspace$ to $Y.$
Particularly, the following estimate holds.

\begin{prop}
One has
\be
\label{estlineq2}
\|z\|_{L^\infty(0,T;L^2(\Omega))}
\leq
{M_1}   \sum_{i=1}^m \|b_i\|_\infty \|v_i\|_1,
\ee
where $M_1 := e^{\frac{T}{2} + \sum_{{i=0}}^m  \|\ub_i\|_1\|b_i\|_\infty} \|\yb\|_{L^\infty(0,T;L^2(\Omega))}.$
\end{prop}

\begin{proof}
Immediate consequence of 
Lemma \ref{LemmaEstimatez}.
\finsquare

It is well-known that the dual of $C([0,T])$ is the set of (finite) Radon measures,
and that the action of a finite Radon measure coincides with the
Stieltjes integral associated with a bounded variation function $\mu\in BV(0,T)$. 
We may assume w.l.g. that $\mu (T)=0$, and we let  $\dd\mu$ denote the Radon measure
associated  to $\mu $. Note that if $\dd \mu$ belongs to
the set $\mathcal{M}_+(0,T)$ of nonnegative finite Radon measures   
then we may take $\mu $ nondecreasing. Set
\blue{\be
BV(0,T)_{0,+} := 
\left\{ \mu   \in BV(0,T)
\text{ nondecreasing, right-continuous};\;
\mu (T) =0
\right\}. 
\ee}

The {\em generalized Lagrangian} of problem $(P)$ is,
choosing the multiplier of the state equation to be 
$(p,p_0) \in L^2(Q) \times H^{-1}(\Om)$ and taking $\beta\in\RR_+$, 
$\mu\in BV(0,T)^q_{0,+},$
\be\label{Lagrangian}
\begin{split}
&\call [\beta,p,p_0,\mu](u,y) :=  \beta J(u,y)  -
\la p_0, y(\cdot,0) - y_0 \ra_{H^1_0(\Om)} 
\\
&+\int_Q p \Big(\Delta y(x,t) -\gamma y^3(x,t) +
f (x,t) + \sum_{i=0}^m u_i(t) b_i(x) y(x,t) - \dot y(x,t)\Big) {\rm d} x
{\rm d} t
\\
&+ \sum_{j=1}^q \int_0^T g_j(y(\cdot, t)) {\rm d} \mu_j(t).
\end{split}
\ee
The  {\em costate equation} is the condition of stationarity  of the 
Lagrangian $\call$ {with respect to the state} that is, for any $z\in Y$:
\begin{multline}
\label{costate-eq}
 \int_Q p(\dot z + Az ){\rm d}x {\rm d}t 
+ \la p_0, z(\cdot,0) \ra_{H^1_0(\Om)} =
 \sum_{j=1}^q \int_0^T \int_\Omega c_j z {\rm d} x {\rm d} \mu_j(t)  
 \\ +
\beta \int_Q  (\yb-y_d) z {\rm d}x {\rm d}t
 + \beta \int_\Omega (\yb(x,T)-y_{dT}(x)) z(x,T)  {\rm d}x.
 \end{multline}
To each $(\varphi,\psi)\in L^2(Q)\times H^1_0(\Om)$,
let us associate $z=z[\varphi,\psi] \in Y$, the unique solution of 
\be
\dot z +Az = \varphi; \quad z(\cdot,0)= \psi. 
\ee
{Since this mapping is onto},
the costate equation \eqref{costate-eq} can be rewritten, for 
$z=  z[\varphi,\psi]$ and arbitrary
$(\varphi,\psi)\in L^2(Q)\times H^1_0(\Om)$, as
\begin{multline}
\label{costat-eq}
  \int_Q p \varphi {\rm d}x {\rm d}t 
+ \la p_0, \psi \ra_{H^1_0(\Om)} = 
 \sum_{j=1}^q \int_0^T \int_\Omega c_jz {\rm d} x {\rm d} \mu_j(t) , \\
   +
\beta \int_Q  (\yb-y_d) z {\rm d}x {\rm d}t
 + \beta \int_\Omega (\yb(x,T)-y_{dT}(x)) z(x,T)  {\rm d}x.
\end{multline}
The r.h.s. of \eqref{costat-eq} {can be seen as a  linear continuous form on the pairs $(\varphi,\psi)$ of the space}  $L^2(Q) \times H^1_0(\Om)$.
By the Riesz Representation Theorem, {there exists a unique $(p,p_0) \in L^2(Q) \times H^{-1}(\Om)$ satisfying \eqref{costat-eq}, that means, there is a unique solution of the costate equation}. 

\if{
\begin{rem}
\label{p-smooth}
If $p$ is smooth enough, we can integrate 
by parts in time and we have the 
initial-terminal conditions 
\be
\label{p-smooth1}
p(\cdot,T) = \beta (\yb(\cdot,T)-y_{dT}(\cdot)), 
\quad 
p(0) = p_0. 
\ee
\end{rem}
}\fi
Next consider the {\em alternative costates}
\be
\label{p1}
{p^1 := p+ \sum_{j=1}^q c_j \mu _j};
\quad
p^1_0 := p_0 + \sum_{j=1}^q c_j\mu _j(0).
\ee

\begin{lem}
\label{costate-eq-reg.l}
Let $(p,p_0,\mu )\in L^2(Q)\times H^{-1}(\Omega)
\times BV(0,T)^q_{0,+}$ satisfy
\eqref{costat-eq}, let
$(p^1,p^1_0)$
be given by \eqref{p1}.
Then $p^1\in Y$, it satisfies $p^1(0)=p^1_0$,
    and it is the unique solution of
\be
\label{costat-eq4}
- \dot p^1 + A p^1 = \beta  (\yb-y_d) 
 + \sum_{j=1}^q \mu _j A c_j,
\quad
p^1(\cdot,T) = \beta (\yb(\cdot,T)-y_{dT}).
\ee
Moreover,
$p(x,0)$ 
  and $p(x,T)$ are well-defined as elements of $H^1_0(\Omega)$ in view of \eqref{p1}, 
  and we have 
\be\label{equationp1}
p(\cdot, 0)=p_0,\quad p(\cdot, T)=\beta ( \yb(\cdot, T) - y_{dT} ).
\ee
\end{lem}

\begin{proof}
Let $z\in Y$. Note that, for $1\leq j \leq q$, the function
$t\mapsto\int_\Om c_j(x)z(x,t) \dd x$,
belongs to $W^{1,1}(0,T)$ and is, therefore,
of bounded variation. 
Using the integration by parts formula for the product of 
scalar functions with
bounded variation, one of them being continuous
(see e.g. \cite[Lemma 3.6]{MR2683898}),  
and taking into account the fact that $\mu _j(T)=0$,
we get that, for $\psi=z(\cdot,0)$,
\be
\label{costat-eq2}
\sum_{j=1}^q\int_Q  c_j \mu _j \dot z \dd x \dd t
+ \sum_{j=1}^q \mu _j(0) \la c_j,\psi\ra_{L^2(\Omega)}
= -
 \sum_{j=1}^q\int_0^T \int_\Om c_j z {\rm d} x {\rm d} \mu_j(t).
\ee 
By the definition \eqref{p1} of the alternative costate, the latter equation can be rewritten as
\be
\label{costat-eq2r}
\int_Q  (p^1-p) \dot z \dd x \dd t
+\la p^1_0 - p_0,\psi\ra_{H^1_0(\Omega)}
= -
 \sum_{j=1}^q\int_0^T \int_\Om c_j z {\rm d} x {\rm d} \mu_j(t).
\ee 
Now adding 
\eqref{costat-eq} and \eqref{costat-eq2r}, as well as the identity
\be
\int_Q (p^1-p)  A z 
=
\int_Q \sum_{j=1}^q c_j \mu _j  A z
\ee
we obtain, since $\varphi=\dot z + Az$, 
that
(implicitly identifying, as usual, $L^2(\Om)$ with its dual)
\begin{multline}
\label{costat-eq3}
 \int_Q p^1 \varphi {\rm d}x {\rm d}t 
+ \la p^1_0, \psi \ra_{H^1_0(\Om)}
\\= \beta \int_Q  (\yb-y_d) z {\rm d}x {\rm d}t  
 + \beta\int_\Omega (\yb(x,T)-y_{dT}(x)) z(x,T)  {\rm d}x   + \int_Q \sum_{j=1}^q c_j \mu _j  A z.
\end{multline} 
Since $A$ is symmetric, 
using \eqref{HypSpaces2},
we see that $p^1$ is solution in $Y$ of \eqref{costat-eq4};
 the solution of the latter being clearly unique.
Multiplying \eqref{costat-eq4} by $z\in Y$ and integrating over~$Q$,
with an integration by parts of the term with $\dot p^1z$,
we recover (using \eqref{p1}) equation \eqref{costat-eq3}
implying that
$p^1(x,0) = p^1_0(x)$ for a.a. $x$ in $\Om$.
Conversely, it is easy to prove that any solution of \eqref{costat-eq3}
is solution of \eqref{costat-eq4}.

\if{Finally, $p^1$ is in $W(0,T)\subset C(0,T;H)$ and $\mu$ has bounded
variation, so we get by \eqref{p1} the condition for $p$ at initial and final point.}\fi
Since $p^1$ and 
$c_j \mu _j$ belong to $L^{\infty}(0,T;H^1_0(\Om))$, by \eqref{p1} also $p$ has this regularity.
Use \eqref{p1} again,
the final condition on $p^1$ and the fact that
$\mu (T)=0$ to get
the second relation of
\eqref{equationp1}. Furthermore, we have
\be
p_0=p^1(\cdot, 0)-  \sum_{j=1}^q c_j\mu_j(0)=p(\cdot,0).
\ee
\finsquare

\begin{cor}\label{cor:reg-p}
\if{
The solution $p$ of \eqref{costate-eq} belongs to 
$BV(0,T;L^2(\Om))$
and 
\be
\label{equationp1}
p(T)=\beta\big( \yb(T) - y_{dT}\big).
\ee
}\fi
If $\mu \in H^1(0,T)^q,$ then 
$p\in Y$ and
\be
\label{equationp2}
-\dot p +Ap = \beta(\yb-y_d) + \sum_{j=1}^q c_j \dot{\mu}_j.
\ee
\end{cor}

\begin{proof}
This follows immediately from \eqref{p1} and \eqref{costat-eq4}. 
\finsquare

\subsection{First order optimality conditions}
Let $(\ub,\yb)$ be an admissible trajectory of problem $(P)$. 
We say that $\mu\in BV(0,T)^q_{0,+}$ is 
{\em complementary to the state constraint} for $\yb$ if
\be\label{state-constraints}
\int_0^T g_j(\yb(\cdot,t)) \dd \mu_j(t) {=\int_0^T \left( \int_\Omega c_j(x)\yb(x,t) \dd x + d_j \right) \dd \mu_j(t)} =0, \;\;j=1,\ldots,q.
\ee 
{Let
$
(\beta,\mu)\in \RR_+\times BV(0,T)^q_{0,+}.
$}
We say that $p \in L^\infty(0,T;H^1_0(\Om))$
is the {\em costate associated} with 
$(\ub,\yb,\beta,\mu)$, or shortly to $(\beta,\mu),$ 
if it is the unique solution of \eqref{costate-eq} with $p_0=p(\cdot ,0)$. 

\begin{dfn}
We say that the triple
$(\beta,p,\mu) \in \RR_+\times L^\infty(0,T;H^1_0(\Om))
\times BV(0,T)^q_{0,+}$ is a {\em generalized Lagrange multiplier} if it
satisfies the following {\em first-order optimality conditions:}
$\mu$ is complementary to the state constraint,
$p$ is the costate associated with $(\beta,\mu)$,
the non-triviality condition
\be
(\beta,\dd \mu) \neq 0,
\ee
holds and, for $i=1$ to $m$, 
defining the {\em switching function} by 
\be
\label{def-psi}
\Psi_i^p(t) := \beta\alpha_i + \int_\Om b_i(x) \yb(x,t) p(x,t) \dd x,
\quad \text{for }i=1,\ldots,m,
\ee
one has $\Psi^p \in L^\infty(0,T){^m}$ and 
\be
\label{FirstControl}
\sum_{i=1}^m \int_0^T \Psi^p_i(t) (u_i(t)-\ub_i(t))  \dd t \geq 0,\quad \text{for every } u\in \Uad.
\ee
We let $\Lambda(\ub,\yb)$ denote 
the set of generalized Lagrange multipliers $(\beta,p,\mu)$ associated with $(\ub,\yb)$.
If $\beta=0$ we say that the corresponding multiplier is {\em singular}. Finally,  we write
 $\Lambda_1(\ub,\yb)$ for the set of pairs $(p,\mu)$ with $(1,p,\mu)\in \Lambda(\ub,\yb)$. When the nominal solution is fixed and there is no place for confusion, we just write $\Lambda$ and $\Lambda_1.$
\end{dfn}

{Note that, in view of \eqref{equationp1}, $p_0=p(\cdot,0)$ and hence we do not need to consider $p_0$ as a component of the multiplier.}

\subsubsection{The reduced abstract problem}\label{sec:red-prob}

Set $F(u):= J(u,y[u]),$ and 
$G: \Uspace \rar C([0,T])^q$, $G(u):= g(y[u])$.
The {\em reduced problem} is 
\be
\label{RP}
\tag{RP}
\Min_{u\in \Uad} F(u); \quad G(u) \in K,
\ee
where $K:=C([0,T])_-^q$ is the closed convex cone of continuous
functions over $[0,T],$ with values in $\cR_-^q.$
Its interior is the set of functions in 
$C([0,T])^q$ with negative values.
We say that  the reduced problem \eqref{RP} is {\em qualified} at $\ub$ if:
\be
\label{qualif-RP}
\left\{ \ba{lll}
\text{there exists $u\in \Uad$ such that $v:=u-\ub$ satisfies}
\\
G(\ub)+DG(\ub) v \in \intt(K).
\ea\right.
\ee

Given a  Banach space 
$X,$ a closed convex subset $S\subseteq X$ and a point $\bar s \in S,$ 
the 
{\em normal cone} to $S$ at $\bar s$
is defined as
\be
N_S(\bar s)  := \{ x^*\in X^* ;\;\la x^*, s-\bar s\ra \leq 0,\,\,\, \text{for all } s\in S\}.
\ee
 We get the following first order conditions for our problem $(P)$:

\begin{lem}\label{multipliers} 
{\rm (i)}
If $(\ub,y[\ub])$ is an $L^2$-local solution of $(P),$ then the associated set $\Lambda$ of multipliers is nonempty.
\\ {\rm (ii)}
If in addition the qualification condition \eqref{qualif-RP} holds at $\ub$, then there is no
singular multiplier, and
$\Lambda_1$
is bounded in $L^{\infty}(0,T;H^1_0(\Om))\times BV(0,T)^q_{0,+}$. 
\end{lem}

\begin{proof}
(i)
Let us consider the generalized Lagrangian associated with the reduced problem \eqref{RP}:
\be
\label{LangrianG}
L[\beta,\mu](u):= \beta F(u) + \sum_{j=1}^q \int_0^T G_j(u)(t)\dd\mu_j(t).
\ee
Let $\ub$ be a local solution of \eqref{RP}.
By, e.g., \cite[Proposition 3.18]{MR1756264},
since $K$ has nonempty interior,
there exists a generalized Lagrange
multiplier associated with problem \eqref{RP}, that is, 
$(\beta,\dd \mu) \in\cR_+\times  N_K(G(\ub))$ for $\mu \in BV(0,T)^q_{0,+}$ such that 
\be
(\beta,\dd \mu)\neq 0\quad  \text{and}\quad  -D_uL[\beta,\mu](\ub) \in
N_{\Uad}(\ub).
\ee
Due to the costate equation \eqref{costat-eq}, the latter condition
is equivalent to 
\eqref{FirstControl}.
\\ (ii)
That $\Lambda_1$
is nonempty and weakly-* compact 
follows from \cite[Proposition~3.16]{MR1756264}.
\if{Now let $(p_{\ell}, \mu_{\ell})$
be a bounded sequence in $\Lambda_1$.
We can assume, up to the extraction
of a subsequence, that
$\mu_{\ell}$
weakly-* converges to some measure
$\mub$. Also $p_{\ell}$ converges \mbox{weakly-*}, since the mapping
$(\beta,\mu)\mapsto p$
($p$ being the solution of the costate equation with
data $(\beta,\mu)$)
is linear continuous
and, therefore,   weakly-* continuous from
$\RR\times \mathcal{M}([0,T])$ to $L^\infty(0,T; H^1_0(\Om))$. 
Since $\Lambda_1$ is weakly-*compact, 
the conclusion follows.}\fi
\finsquare

Observe that the qualification condition for \eqref{RP} given in \eqref{qualif-RP} holds if and only if the following qualification condition for the original problem \eqref{P} is satisfied:
\be
\label{qualif-P}
\left\{ \ba{lll}
\text{there exists $\eps>0$ and $u\in \Uad$ such that $v:=u-\ub$ satisfies}
\\
g_j(\yb(\cdot,t)) + g_j'(\yb(\cdot,t))z[v](\cdot, t) <-\eps,
\text{ for all $t\in [0,T]$, and $j=1,\dots,q$.}
\ea\right.
\ee
 In view of Lemma \ref{multipliers}, 
if \eqref{qualif-P} is satisfied, then 
$\Lambda_1$ is nonempty and weakly-* compact.

In the sequel of this section, we consider
$(\ub,\yb,\beta,p,\mu),$ with 
$\yb$ the state associated with the admissible
control $\ub$ and
$(\beta,p,\mu)\in \Lambda.$

\subsection{Arcs and junction points}
\label{arcs-junc-points}
We assume in the remainder of the article that the admissible set of controls has the form
\be
\label{HypUad}
\Uad = \{u\in L^2(0,T)^m;\; \umin_i \leq u_i(t) \leq \umax_i,\,\, i=1,\dots,m\},
\ee
for some constants $\umin_i< \umax_i$, for $i=1,\dots,m.$
Consider the {\em contact sets associated to the control bounds} defined, up to null measure sets, by
\be
\label{defI}
\check{I}_i  : = \{t\in [0,T];\; \ub_i(t) = \umin_i\},
\quad
\hat{I}_i : = \{t\in [0,T];\; \ub_i(t) = \umax_i\},\quad I_i :=\check{I}_i  \cup \hat{I}_i.
\ee
For $j=1,\dots,q,$ the \emph{contact set associated with the 
$j$th state constraint} is
\be
\label{defCj}
I^{C}_{j}:=\{ t \in [0,T];\;g_j(\yb(\cdot,t))=0\}.
\ee
Given $0 \le a<b\le T$,
 we say that $(a,b)$ is a \emph{maximal state constrained arc}
for the $j$th state constraints, if $I^C_j$ contains
$(a,b)$ but it contains no open interval strictly containing
$(a,b)$.
We define in the same way a 
\emph{maximal (lower or upper)
control bound constraints arc}
(having in mind that the latter are defined up to a
null measure set).

We will assume the following {\em finite arc property:}
\be
\label{arcs-junc-points-hyp}
\left\{
\begin{array}{c}
\text{the contact sets for the state and bound constraints are,} \\ 
\text{{up to a finite set}, the union of finitely many maximal arcs.}
\end{array}
\right.
\ee
In the sequel we identify $\ub$ 
(defined up to a null measure set)  with a function whose 
$i$th component is constant over each interval of time that is
included, up to a zero-measure set, in either  
$\check{I}_i$ or $\hat{I}_i$.
For almost all $t\in [0,T]$, the {\em set of active constraints at time $t$}
is denoted by $(\check{B}(t), \hat{B}(t),C(t) )$ where
\be
\label{BC}
\left\{ \ba{lll}
\check{B}(t) := \{ 1\leq i \leq m;\; \ub_i(t) = \umin_i\},
\vspace{1mm} \\
\hat{B}(t) := \{1\leq i \leq m; \;  \ub_i(t) = \umax_i\},
\vspace{1mm} \\
C(t) := \{ 1\leq j \leq q; \; g_j(\yb(\cdot,t)) = 0\}.
\ea \right.\ee
These sets are well-defined over open subsets of $(0,T)$ 
where the set of active constraints
is constant, and by
\eqref{arcs-junc-points-hyp},
there exist time points called {\em junction points}
\begin{align}\label{junction_points}
0=:\tau_0 < \cdots < \tau_{r}:=T,
\end{align}
such that the intervals $(\tau_k,\tau_{k+1})$ are {\em maximal arcs
with constant active constraints}, for $k=0,\dots,r-1.$ 
We may sometimes call them shortly {\em maximal arcs.}

\begin{dfn}
\label{def-bk-ck}
For $k=0,\dots,r-1,$ let  $\check{B}_k, \hat{B}_k, C_k$
denote the set of indexes of active lower and upper bound constraints, and 
state constraints, on the
maximal arc $(\tau_k,\tau_{k+1})$,
and set $B_k:= \check{B}_k \cup \hat{B}_k$.
\end{dfn}

As a consequence of above definitions and hypothesis \eqref{HypUad} on the admissible set of controls, we get
 the following characterization of the first order condition.
 \begin{cor}
 \label{CorFirst}
The first order optimality condition \eqref{FirstControl} is equivalent to 
\be
\label{coco-coco}
\{t\in [0,T];\; \Psi_i^p(t) >0\} \subseteq \check{I}_i,\qquad
\{t\in [0,T];\; \Psi_i^p(t) <0\} \subseteq \hat{I}_i,
\ee
for every $(\beta,p,\mu)\in \Lambda.$
\end{cor}

\subsection{About the jumps of the multiplier at junction points} 
Given a function $v:[0,T]\rar X$,
where $X$ is a Banach space, 
we denote (if they exist) its left and right limits 
at $\tau\in [0,T]$ by $v(\tau\pm)$,
with the convention 
$v(0-):=v(0)$, $v(T+):=v(T)$; 
then the jump of $v$ at time $\tau$ is defined as
$[v(\tau)]:=v(\tau+)-v(\tau-)$.

We denote the time derivative of the state constraints by 
\be
\label{dtgj}
\blue{{\gstate}} := \frac{\dd}{\dd t} g_j(\yb(\cdot,t)) = \int_\Om c_j(x)
\dot \yb(x,t) \dd x,
\quad j=1,\ldots,q.
\ee
{Note that \blue{{$\gstate$}} is an element of $L^1(0,T),$ for each $j=1,\ldots,q.$}
\begin{lem}
\label{costate-eq-reg.l-cor}
Let $\ub$ have left and right limits at $\tau \in (0,T)$.
Then 
\be
\label{costat-eq4-c}
[\Psi^p_i(\tau)] [\ub_i(\tau)] =
[ {\blue \gstatetau} ][\mu_j(\tau)] = 0, 
\;\; i=1,\ldots,m, \; \;\; j=1,\ldots,q.
\ee

\end{lem}

\begin{proof}
Since $p=p^1-\sum_{j=1}^q c_j \mu_j$, 
$p^1\in Y\subset C([0,T];H^1_0(\Om))$, $\mu \in BV(0,T)^q_{0,+},$
and any function with bounded variation has 
left and right limits, 
we have that $p(\cdot,\tau)$ has left and right limits in
$H^1_0(\Om)$ and
satisfies 
\be
[p(\cdot,\tau)] = - \sum_{j=1}^q c_j [\mu_j(\tau)],
\quad 
\text{for all $\tau\in [0,T]$.}
\ee

Consequently $\Psi^p$ has left and right limits 
over $[0,T]$, and 
\be\label{Psi-left-right}
[\Psi^p_i(\tau)] = - \sum_{j=1}^q [\mu_j(\tau)] \int_\Om b_i(x) c_j(x) \yb(x,\tau) \dd x,
\quad 
\text{for all $\tau\in [0,T]$.}
\ee

Next, if $\ub$ has left and right limits at some
$\tau \in (0,T)$, then, using the state equation and \eqref{dtgj}, we get
\be\label{jump-g_j}
[ {\blue \gstatetau} ] = 
\sum_{i=1}^m [\ub_i(\tau)] \int_\Om b_i(x) c_j(x) \yb(x,\tau) \dd x.
\ee
Thus, by \eqref{Psi-left-right} and \eqref{jump-g_j}, we have 
\be
\label{costat-eq4-c-f}
\sum_{i=1}^m [\Psi^p_i(\tau)] [\ub_i(\tau)] + \sum_{j=1}^q [ {\blue \gstatetau}] [\mu_j(\tau)] = 0.
\ee

\if{
On the other hand, eliminating $\dot \yb (x,t)$ using the state equation we get that, 
for $z\in H^2(\Om) \cap H^1_0(\Om)$,
\be
\label{DG-un}
D_{u_i} g^{(1)}_j(\yb(\cdot,t)) = \int_\Om b_i(x) c_j(x) \yb(x,t) \dd x,
\ee
Combining \eqref{Psi-left-right} and \eqref{DG-un} we obtain 
\be
\label{DG-Psi}
[\Psi^p_i(\tau)] = - \sum_j [\mu_j(\tau)] D_{u_i} g^{(1)}_j(\yb(\cdot,\tau)).
\ee

Next, if $\ub$ has left and right limits at some
$\tau \in (0,T)$, then, using the state equation and \eqref{dtgj}, we get
\be\label{jump-g_j}
[ g^{(1)}_j(\yb(\cdot,\tau)) ] = 
\sum_{i=1}^m [\ub_i(\tau)] \int_\Om b_i(x) c_j(x) \yb(x,t) \dd x
= 
\sum_{i=1}^m [\ub_i(\tau)] D_{u_i} g^{(1)}_j(\yb(\cdot,\tau)).
\ee
Thus, by \eqref{DG-Psi} and \eqref{jump-g_j}, we have 
\be
\label{costat-eq4-c-f}
\sum_{i=1}^m [\Psi^p_i(\tau)] [\ub_i(\tau)] + \sum_{j=1}^q [ g^{(1)}_j(\bar y(\cdot,\tau)) ] [\mu_j(\tau)] = 0.
\ee

}\fi
By the first order conditions \eqref{coco-coco} we have 
$[\Psi^p_i(\tau)] [\ub_i(\tau)] \leq 0$, for $i=1$ to $m$.
Also $[\mu_j(\tau)] \geq 0$, and if $[\mu_j(\tau)] \neq 0$, 
the corresponding state constraint
has a maximum at time $\tau$.
Then $[ {\blue \gstatetau}] \leq 0$.
So, all terms in the sums in \eqref{costat-eq4-c-f}
are nonpositive and therefore are equal to zero.
The conclusion follows.
\finsquare

\subsection{Regularity of the switching function and multiplier over maximal arcs}
\label{arcs-smooth}

In the discussion that follows we fix 
$k$ in $\{0,\dots,r-1\}$, and consider a maximal arc 
$(\tau_k,\tau_{k+1}),$ where the junction points are given in \eqref{junction_points}.
Recall Definition \ref{def-bk-ck} for $\check{B}_k, \hat{B}_k, B_k\subset \{1,\ldots,m\}$ and $C_k \subset \{1,\ldots,q\}$.
Set $\Bb_k := \{1,\ldots,m\} \setminus B_k$
and  
\be
\label{Mij}
M_{ij}(t) := \int_\Om b_i(x) c_j(x) \yb(x,t) \dd x,
\quad 
1\leq i \leq m, \;\; 1\leq j \leq q.
\ee
Let  $\Mb_k(t)$ (of size $|\Bb_k|  \times |C_k|$)
denote the submatrix of $M(t)$ having rows with index in $\Bb_k$
and columns with index in $C_k$.
In the sequel we make the following assumption.
\begin{as}
\label{hyp:controllability}
 We assume that $|C_k| \leq |\Bb_k|,$ for $k=0,\dots,r-1,$ and that the following
{\em (uniform) local  controllability condition} holds:
\begin{equation}
\label{controllability}
\left\{
\begin{aligned}
&\text{there exists } \alpha>0, \text{ such that } | \Mb_k(t) \lambda | \geq \alpha | \lambda |,\\
&\text{for all } \lambda \in \RR^{|C_k|},  \text{ a.e. on } (\tau_k,\tau_{k+1}),  \text{ for } k=0,\dots,r-1.
\end{aligned}
\right.
\end{equation}
\end{as}
\begin{rem}
This hypothesis was already used in a different setting (i.e. higher-order state constraints in the finite dimensional case) in e.g. \cite{MR2504044,Mau79a}. Note that
condition \eqref{controllability} implies, in particular, that 
the matrix $\Mb_k(t)$ has rank $|C_k|$ over 
$(\tau_k,\tau_{k+1})$.
\end{rem}

The expression of the derivative of the $j$th state constraint, for $1\leq j \leq q$, is
\be
\label{state-cons-g}
{\blue \gstate} = \int_\Om c_j(x)\big(f(x,t) +\Delta \yb(x,t) - \gamma \yb(x,t)^3 \big) \dd x
+ \sum_{i=1}^m M_{ij}(t) \ub_i(t),
\ee
or, in vector form, for the active state constraints (denoting by 
 ${\blue \gstateC}$ the vector of components ${\blue \gstate}$ for $j \in C_k$), we get
\be
\label{gct-ct}
{\blue \gstateC} = G_k(t) + \Mb_k(t)^\top \ub_{\bar B_k} (t) = 0,
\ee
where $\ub_{\bar B_k}$ is the restriction of $\ub$ to the components in 
$\Bb_k$, and $G_k(t)$ takes into account the 
contributions of the integral in \eqref{state-cons-g}
and of the components of $\ub$ in $B_k$, that is, for $j\in C_k$:
\be
G_{k,j}(t) := \int_\Om c_j\big(f(x,t) +\Delta \yb(x,t) - \gamma \yb(x,t)^3 \big) \dd x
+
\sum_{i\in B_k} M_{ij}(t) \ub_i(t).
\ee
By the controllability condition \eqref{controllability}, 
$\Mb_k(t)^\top$ is onto from $ \RR^{|\bar B_k|}$ to $\RR^{|C_k|}$.
In view of the state equation, 
by an integration by parts argument,
$M(t)$ has a bounded derivative
and is therefore Lipschitz continuous.
So there exists a linear change of control variables of the form $u(t) = N_k(t) \uh(t),$  for some invertible Lipschitz continuous matrix $N_k(t)$ of size $m\times m$, 
such that, calling $\bar N_k(t)$ the upper $|\bar B_k|\times |\bar B_k|-$diagonal block of $N_k(t),$ it holds that
$\Mb_k(t)^\top \Nb_k(t)$
has its first $|C_k|$ columns being equal to the identity
matrix, the other columns having null components.
That is,  for all $\uh\in \RR^{|\Bb_k| }$:
\be
\label{mbar-n-def}
( \Mb_k(t)^\top \Nb_k(t)\uh )_j = 
\uh_j, \quad \text{for } j=1,\ldots, |C_k|.
\ee 
Over a maximal arc $(\tau_k,\tau_{k+1})$, we have that
${\blue \gstate }=0$ for $j\in C_k$ is equivalent to 
\be
\hu_j = - G_{k,j}(t), \quad \text{for } j = 1, \ldots,|C_k|.
\ee

The following result on the regularity of the state constraint multiplier holds. Recall the definition of the switching function $\Psi^p$ given in \eqref{def-psi}.
\begin{prop}\label{prop-state-const}
 There exists $a\in L^1(0,T)^m$ such that
 \begin{itemize}
  \item [(i)] 
  \be\label{dPsi}
  \dd \Psi^p (t) = a(t) \dd t - M(t) \dd \mu(t),\quad \text{on } [0,T].
  \ee
  \item[(ii)] 
We have that
$\dot \mu_{C_k}$ is locally integrable
over $(\tau_k,\tau_{k+1})$, hence $\mu_{C_k}$ is locally absolutely continuous, and the following expression holds
  \be
 \label{psi-abmp}
  0=\dot \Psi^p_{\bar B_k} (t) = a_{\bar B_k} (t) \dd t -\bar{M}_k(t) \dot \mu_{C_k}(t),\quad \text{on } (\tau_k,\tau_{k+1}).
  \ee
 \end{itemize}

\end{prop}

\begin{proof}
By \eqref{p1} and \eqref{def-psi},
one has, for $i\in\{1,\ldots,m\}$:
\be
\label{def-psi-cons}
\Psi^p_i(t) = \alpha_i + \int_\Om b_i(x) \yb(x,t) p^1(x,t) \dd x
- \sum_{j=1}^q M_{ij}(t) \mu_j(t),
\quad i=1,\dots,m. 
\ee 
Let $a\colon (0,T) \to \cR^m$ be given by 
\be
\label{defai}
a_i(t) := \ddt \int_\Om b_i(x) \yb(x,t) p^1(x,t) \dd x
- \sum_{j=1}^q \dot M_{ij}(t) \mu_j(t),
\quad
\text{for }i=1,\ldots,m.
\ee
Note that 
$\dot M_{ij}(t) = \int_\Om b_i(x) c_j(x) \dot \yb(x,t) \dd x$
is integrable
\big(this follows integrating by parts the contribution of 
$\Delta \yb$ and since
$Y \subset C([0,T]; H^1_0(\Om))$\big), and that 
\be
\label{expr_a-mu-o}
\ddt \left( \yb p^1 \right) =  p^1 \, \Delta \yb - \yb\,\Delta p^1 + fp^1 +2\gamma \yb^3 p^1 -\beta \yb (\yb-y_d) - \sum_{j=1}^q \mu_j \yb Ac_j.
\ee
Integrating by parts the terms in \eqref{expr_a-mu-o} containing Laplacians,  we get, for the 
integral term in \eqref{defai},
\begin{equation}
\label{expr_a-mu-o-b}
\begin{aligned}
\int_\Om b_i(x) 
\ddt \left( \yb p^1 \right) \dd x &=  
 \int_\Om  b_i \left(
fp^1 +2\gamma \yb^3 p^1 -\beta
\yb (\yb-y_d) - \sum_{j=1}^q \mu_j \yb Ac_j
\right) \dd x \\
& \quad - \int_\Om  \nabla b_i (p^1\nabla \yb - \yb \nabla p^1)\dd x.
\end{aligned}
\end{equation}  
It follows that $a \in L^1(0,T)^m$ and
\eqref{dPsi} holds. Consequently $\Psi^p$ has bounded variation.

Over $(\tau_k,\tau_{k+1})$, we have 
$\dd \mu_j(t) = 0$ whenever $j\not\in C_k$, and so
\be
\label{psi-abm}
0 = \dd \Psi^p_{\Bb_k} (t) = a_{\Bb_k} (t) \dd t - \Mb_k(t) \dd \mu_{C_k}(t). 
\ee
Since $\Mb_k(t)$ is continuous and injective, and $a$
is integrable, this implies the existence of 
$\dot \mu_j(t) \in L^1(0,T)$, for $j\in C_k$. 
This yields \eqref{psi-abmp}.

\if {As expected from the finite dimensional theory 
there is no explicit contribution of the control in the 
derivative of the switching function, and we get that for
$ i\in \Bb$:
\be
 \int_\Om b_i(x) \left(
p(x,t)) ( \yb(x,t)\varphi'(\yb(x,t)) -\varphi(\yb(x,t)) )
- y (y-y_d) -
\sum_{j=1}^q c_j \yb  \dot \mu_j(t)\right) \dd x 
=0.
\ee
Since $\mu_j(t)$ is constant when $j\not\in C$, 
this is of the form
\be
\label{xi-mbt}
\Xi(t) + \Mb(t) \dot \mu_C(t) = 0,
\ee
with in fact $\Xi(t) = -a_{\Bb}(t)$. 
} \fi
And so, $\mu_{C_k}(t)$ is locally 
absolutely continuous.
\finsquare

\begin{cor}
\label{ref-reg-a}
Let the finite maximal arc property
\eqref{arcs-junc-points-hyp} 
and the uniform
controllability condition \eqref{controllability} hold.
\begin{itemize}
\item[(i)] If $f,y_d \in L^\infty(0,T;L^2(\Omega)),$ 
then $a\in L^\infty(0,T)^m.$
\item[(ii)] If additionally
$
f, y_d \in C([0,T]; L^2(\Om)),
$
then 
$\mu$ is $C^1$ over each 
maximal arc $(\tau_k,\tau_{k+1}).$
\end{itemize}
\end{cor}

\begin{proof}
Indeed, a careful inspection of the previous
proof shows that $a$ is a sum of essentially
bounded terms, so (i) follows. 
If the additional regularity hypotheses of item~(ii) hold, then $a$ is continuous.
The regularity of $\mu$ follows from
\eqref{psi-abm}
and the local controllability assumption \eqref{controllability}. This concludes the proof.
\finsquare

\section{Second order necessary conditions}\label{sec:3}

In this section we derive second order necessary optimality conditions, based on the concept of {\em radiality} of critical directions. 

Let us consider an admissible trajectory  $(\ub,\yb)$.

\subsection{Assumptions and additional regularity}
\label{sonc:setting}
{For the remainder of the article we make the following set of assumptions}.

\begin{as}\label{hyp-setting}
{The following conditions hold:}
\begin{itemize}
\item[1.] {the control set has the form
\eqref{HypUad},}

\item[2.] the  finite maximal arc property \eqref{arcs-junc-points-hyp},

\item[3.] the qualification hypothesis \eqref{qualif-P},

\item[4.] the local (uniform) controllability condition
\eqref{controllability} over each 
maximal arc $(\tau_k,\tau_{k+1})$,

\item[5.] the discontinuity of the derivative of the state constraints at 
corresponding junction points, i.e., 
\be
\label{hyp-g-sharp}
\text{for some $c>0$: $g_j(\yb(\cdot,t)) \leq -c \dist(t, I^C_j)$, for all $t \in [0,T]$,
$j=1,\ldots,q$,}
\ee

\item[6.] the uniform distance to control bounds whenever they are not active, i.e. there exists $\delta>0$ such that, 
\be\label{hyp-geom}
\dist\big(\ub_i(t),\{\umin_i,\umax_i\}\big)\geq \delta,\quad \text{for a.a. } t\notin
{I}_i,\,\text{for all } i=1,\dots,m,
\ee
\item[7.] 
the following regularity for the data (we do not try to take the weakest hypotheses) {\blue for some $r>n+1$}:
\be
\label{sonc:setting-t1-1}
\displaystyle \blue{y_0,y_{dT} \in W^{1,r}_0(\Omega)\cap W^{2,r}(\Omega)},\quad 
y_d,f\in L^\infty(Q),\quad b \in L^{\infty}(\Omega)^{m+1},
\ee
\item[8.]  
the control $\ub$ has left and right limits at the junction points $\tau_k \in (0,T)$,
 (this will allow to apply Lemma \ref{costate-eq-reg.l-cor}).
\end{itemize}
\end{as}

In view of point 3 above, we consider from now on $\beta=1$ and thus we omit the component $\beta$ of the multipliers.

\begin{theorem}
\label{sonc:setting-t1}
The following assertions hold.
\begin{itemize}
\item[(i)]
For any $u\in L^\infty(0,T)^m,$ the associated state $y[u]$ belongs to $C(\bar Q).$
If $u$ remains in a bounded subset of $L^\infty(0,T)^m$ then the 
corresponding states form a bounded set in
$C(\bar Q)$.
In addition, if the sequence $(u_{\ell})$
of admissible controls converges to $\ub$ a.e. on $(0,T)$,
then the associated sequence of states $(y_{\ell}:=y[u_{\ell}])$ converges
uniformly to $\yb$ in $\bar{Q}$.
\item[(ii)]  For every $(p, \mu)\in \Lambda_1,$ one has that $\mu\in W^{1,\infty}(0,T)^q$ and $p$  is essentially bounded in $Q$.  
\end{itemize}
 \end{theorem}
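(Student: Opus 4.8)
The plan is to establish (i) via the maximal parabolic regularity / bootstrap embedding $W^{2,1,p}(Q)\hookrightarrow C(\bar Q)$ for $p>n+1$, and (ii) by leveraging (i) together with Corollary~\ref{ref-reg-a} to upgrade the previously obtained $L^1$-regularity of $\dot\mu$ to $L^\infty$-regularity, and then reading off boundedness of $p$ from the alternative-costate equation~\eqref{costat-eq4}.

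For (i), I would first fix $u\in L^\infty(0,T)^m$ and recall from Lemma~\ref{state-equ-estimA.l} that $y=y[u]$ lies in $Y\subset C([0,T];H^1_0(\Om))$, so in particular $y\in L^\infty(0,T;L^6(\Om))$ by the Sobolev embedding~\eqref{sobolev-l6}; hence the right-hand side $f+y\sum_i u_ib_i-\gamma y^3$ of~\eqref{dynamics} is in $L^\infty(0,T;L^2(\Om))+L^\infty(0,T;L^2(\Om))$, which after one application of $L^p$ maximal parabolic regularity (using the $W^{2,\infty}_0(\Om)$ data hypothesis~\eqref{sonc:setting-t1-1}) puts $y$ in a higher integrability class; iterating this bootstrap a finite number of times (the cubic term is the only obstruction and each step improves the exponent on $y^3$) lands $y$ in $W^{2,1,p}(Q)$ for some $p>n+1$, whence $y\in C(\bar Q)$ by the parabolic Sobolev embedding. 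All constants in these estimates depend only on $\|u\|_\infty$ and the fixed data, monotonically, which gives the uniform boundedness claim for $u$ in a bounded set of $L^\infty$. For the convergence statement, if $u_\ell\to\ub$ a.e.\ and the $u_\ell$ are admissible (hence uniformly bounded in $L^\infty$), then by the above the $y_\ell$ are bounded in $W^{2,1,p}(Q)$, so (up to a subsequence) converge weakly there and strongly in $C(\bar Q)$; identifying the limit with $\yb$ via Lemma~\ref{lem-weak-cv} (or by passing to the limit directly in the state equation, using dominated convergence for the cubic term), and noting the limit is independent of the subsequence, gives uniform convergence of the whole sequence.

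For (ii), fix $(p,\mu)\in\Lambda_1$. From Proposition~\ref{prop-state-const} we already know $\dot\mu_{C_k}\in L^1$ on each maximal arc with the identity~\eqref{psi-abmp}. By part~(i) we have $\yb\in C(\bar Q)$, and the hypotheses~\eqref{sonc:setting-t1-1} give $f,y_d\in L^\infty(Q)\subset L^\infty(0,T;L^2(\Om))$; inspecting the formula~\eqref{defai}--\eqref{expr_a-mu-o-b} for $a$ shows every term is a product of essentially bounded factors (here one uses $p^1\in Y\subset C([0,T];H^1_0(\Om))$, $c_j\in H^2\cap H^1_0$, $b_i,\nabla b_i$ — wait, only $b_i\in L^\infty$ is assumed, so one keeps the $\nabla b_i$ term under control by integrating by parts so that derivatives fall on $c_j,\yb,p^1$ instead — and $\mu$ bounded), so $a\in L^\infty(0,T)^m$, which is exactly Corollary~\ref{ref-reg-a}(i). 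Then~\eqref{psi-abmp} together with the uniform controllability~\eqref{controllability} (which makes $\bar M_k(t)$ left-invertible with uniformly bounded left inverse) yields $\dot\mu_{C_k}\in L^\infty$ on each arc; since there are finitely many arcs and $\mu$ has no jumps except possibly at the finitely many junction points, $\mu\in W^{1,\infty}(0,T)^q$. Finally, with $\mu\in W^{1,\infty}$ Corollary~\ref{cor:reg-p} applies: $p\in Y$ solves $-\dot p+Ap=\beta(\yb-y_d)+\sum_j c_j\dot\mu_j$ with terminal data $\beta(\yb(\cdot,T)-y_{dT})\in W^{2,\infty}_0(\Om)$, and the right-hand side is in $L^\infty(0,T;L^2(\Om))$; running the same maximal-regularity/bootstrap argument as in part~(i) on this linear backward equation gives $p\in W^{2,1,\tilde p}(Q)$ for $\tilde p>n+1$, hence $p\in C(\bar Q)$, in particular $p$ is essentially bounded on $Q$.

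The main obstacle I anticipate is the bootstrap in part~(i): getting from the basic energy regularity $y\in Y$ to $y\in C(\bar Q)$ requires carefully tracking how the cubic nonlinearity's integrability improves at each step of the iteration and verifying the chain of embeddings $W^{2,1,p}(Q)\hookrightarrow L^{q(p)}(Q)$ (or into spaces like $L^\infty(0,T;W^{1,r})$) closes in finitely many steps for $n\le 3$; once $C(\bar Q)$-regularity of the state is in hand, part~(ii) is essentially a bookkeeping exercise combining results already proved. A secondary technical point is handling the $\nabla b_i$ appearing in~\eqref{expr_a-mu-o-b} under the mere hypothesis $b\in L^\infty(\Om)^{m+1}$, which I would resolve by organizing the integration by parts so that no derivative ever lands on $b_i$.
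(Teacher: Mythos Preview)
Your approach to part~(i) is valid but differs from the paper's. You propose a bootstrap on the cubic term: starting from $y\in Y=W^{2,1,2}(Q)$, the anisotropic parabolic embedding gives $y\in L^{10}(Q)$ (for $n\le 3$), so $y^3\in L^{10/3}(Q)$, and one further application of $L^p$ maximal regularity lands $y$ in $W^{2,1,p}(Q)$ for $p>(n+2)/2$, hence in $C(\bar Q)$. The paper instead invokes Theorem~\ref{A:thm2}, whose proof avoids iteration entirely: it uses the monotonicity of the state equation (Lemma~\ref{state-equ-estimA.l}) to trap $y$ between $-y^{--}$ and $y^{++}$, where $y^{\pm\pm}$ solve the \emph{linear} equation ($\gamma=0$) with sign-split data; since those are directly in $W^{2,1,p}(Q)\subset L^\infty(Q)$ by Corollary~\ref{lieberman.t-coro}, $y$ is bounded, and one application of maximal regularity finishes. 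Your bootstrap is more generic (it would survive replacing $y^3$ by any power-type nonlinearity), while the paper's comparison argument is shorter and exploits the specific monotone structure.

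There is, however, a genuine gap in your part~(ii). You write ``since there are finitely many arcs and $\mu$ has no jumps except possibly at the finitely many junction points, $\mu\in W^{1,\infty}(0,T)^q$,'' but this does not follow: having $\dot\mu\in L^\infty$ on each open arc is compatible with $\mu$ jumping at the $\tau_k$, and a single jump destroys $W^{1,\infty}$-membership. The paper closes this by combining Hypothesis~\ref{hyp-setting}.5 (the linear-departure condition~\eqref{hyp-g-sharp}) with Lemma~\ref{costate-eq-reg.l-cor}. The lemma gives $[g^{(1)}_j(\yb(\cdot,\tau_k))]\,[\mu_j(\tau_k)]=0$ at every junction where $\ub$ has one-sided limits (guaranteed by Hypothesis~\ref{hyp-setting}.8), and~\eqref{hyp-g-sharp} forces $[g^{(1)}_j(\yb(\cdot,\tau_k))]\neq 0$ at every entry/exit point of $I^C_j$ (since $g_j$ leaves the constraint at linear rate), whence $[\mu_j(\tau_k)]=0$. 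Without this argument you cannot conclude $\mu\in W^{1,\infty}$, and then your route to $p\in L^\infty(Q)$ via Corollary~\ref{cor:reg-p} does not start.

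Two smaller points. First, your claim that the terminal datum $\yb(\cdot,T)-y_{dT}$ lies in $W^{2,\infty}_0(\Om)$ is not correct: $\yb(\cdot,T)$ is only the time-trace of a $W^{2,1,p}(Q)$ function, i.e.\ in a Besov class, not in $W^{2,p}_0$. The paper sidesteps this by applying the full Lieberman result (Theorem~\ref{lieberman.t}), which only requires the terminal datum to be the trace of some $w\in W^{2,1,p}(Q)$; taking $w=\yb-y_{dT}$ works. Second, your concern about $\nabla b_i$ in~\eqref{expr_a-mu-o-b} is legitimate (the stated hypothesis is only $b\in L^\infty(\Om)^{m+1}$), but your proposed fix of rerouting the integration by parts so that no derivative hits $b_i$ would leave you with $\int_\Om b_i(p^1\Delta\yb-\yb\Delta p^1)$, and $\Delta\yb,\Delta p^1$ are only in $L^p(Q)$, not in $L^\infty(0,T;L^r(\Om))$, so you would lose the $L^\infty(0,T)$ bound on $a$ that you need. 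In practice the argument requires $b\in W^{1,\infty}(\Om)^{m+1}$ (indeed, stronger regularity on $b$ is assumed in the companion paper).
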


\begin{proof}
(i) 
Let $r \in [2,\infty)$. That $y \in W^{2,1,r}(Q)$ follows from
Theorem \ref{A:thm2} in the Appendix.
\if {
Since $b$ and the control $u$ are essentially bounded, $y=y[u]$ verifies $\dot y - \Delta y +e y +\gamma y^3=f,$
with $e(x,t) := - u(t)\cdot b(x,t)$ being essentially bounded.
In view of the regularity imposed to $y_0$ in \eqref{sonc:setting-t1-1}, we can apply 
\cite[Prop.~2.1]{bonnans:hal-00740698} and deduce that for all $q\in (2,\infty)$, the state 
$y$ belongs to $W^{2,1,r}(Q) \subset W^{1,r}(Q)$.
} \fi
Taking $r >n+1$, it follows from the 
Sobolev Embedding Theorem (see e.g. \cite[Theorem 5, p. 269]{evans}) that  $y$
is continuous (and even H\"older-continuous)
on the closure of $Q$,
with uniform bound over the set of admissible controls.
If the sequence $(u_{\ell})$
of admissible controls converges a.e. to $\ub$,
by the Dominated Convergence Theorem, 
$u_{\ell}\rar\ub$ in $L^q(0,T)$
for all $q\in [1,\infty)$. 
So, by similar arguments it can be proved that the 
associated sequence of states converges
uniformly to $\yb$.
\\ (ii)
By Hypothesis \ref{hyp-setting}, $y_{dT}$ is the trace at time $T$ of an element of
$ W^{2,1,r}(Q)$  
\blue{vanishing on $\Sigma$}
and this obviously holds also for $y(T)$ in view of
Theorem~\ref{A:thm2} in the Appendix.
It follows then from corollary \ref{lieberman.t-coro}
that $p^1 \in W^{2,1,r}(Q)$.
\if { 
By \cite[Lemma 3.4, p. 82]{MR0241822} (see also \cite[p. 20]{MR712486}), given $y \in W^{2,1,r}(Q)$, its trace $y(\cdot,T)$  belongs to the {\em Besov space} $B^{2-2/r,r}(\Om)$
(defined in \cite[p. 70]{MR0241822}).
Moreover, since $W^{1,r}(\Om)\subset B^{2-2/r,r}(\Om)$
with continuous inclusion, from \eqref{sonc:setting-t1-1} and latter sentence, we deduce that $p(\cdot,T)$ belongs to $B^{2-2/r,r}(\Om)$ and then, by 
\cite[Thm IV.9.1, p. 341]{MR0241822}, $p$ is in $ W^{2,1,r}(Q)$.
Hence, by an analogous reasoning of part (i), $p^1$ is also
continuous over the closure of $Q$.
} \fi
The continuity of $\mu$ at junction points follows from \eqref{hyp-g-sharp} in Hypothesis \ref{hyp-setting}  and Lemma \ref{costate-eq-reg.l-cor}.
The boundedness on each arc 
of the derivative of $\mu$
follows from 
\eqref{psi-abmp} for $\dot\mu$, since by Corollary~\ref{ref-reg-a}, $a\in L^{\infty}(0,T)^m$
and by
\eqref{controllability},
$\Mb(t)$ is `uniformly injective'  over each arc.
The conclusion follows.
\finsquare

\subsection{Second variation}

{For $(p,\mu) \in \Lambda_1,$ set}
\be\label{kappa}
\kappa(x,t) := 1-6 \gamma \yb(x,t) p(x,t),
\ee
and consider the quadratic form
\be
\label{def-qb}
\calq[p](z,v) := \int_Q  \left(\kappa z^2 
+ 2 p \sum_{i=1}^m v_i b_i  z
\right)\dd x \dd t 
+  \int_\Om z(x,T)^2 \dd x.
\ee
Let $(u,y)$ be a trajectory, and set
\be \label{v-delta-y}
(\delta y,v) :=  (y-\yb,u-\ub).
\ee 
Recall the definition of the operator $A$ given in \eqref{lin-state-equ}.
Subtracting the state equation at 
$(\ub,\yb)$ from the one at $(u,y)$, 
we get that
\be
\label{delta-y}
\left\{
\begin{split}
&\ddt \delta y + A \delta y 
= \sum_{i=1}^m v_i b_i y -3\gamma
\yb(\delta y)^2 -
\gamma (\delta y)^3\quad \text{in } Q,\\
&\delta y=0\quad \text{on } \Sigma,\quad \delta y(\cdot,0) = 0\quad \text{in } \Omega.
\end{split}
\right.
\ee
Combining with the linearized state equation \eqref{lineq2},
we deduce that $\eta$ given by
\be\label{eta}
\eta:=\delta y -z,
\ee
satisfies the equation
\begin{equation}
\label{d_1-equ}
\left\{
\begin{aligned}
&\dot \eta -\Delta \eta 
= r \eta + \tilde r \quad \text{in } Q,\\
&\eta=0 \quad \text{on } \Sigma,\quad \eta (\cdot,0) = 0\quad \text{in } \Omega
\end{aligned}
\right.
\end{equation}
where $r$ and $\tilde r$ are defined as
\begin{align}
 r:=-3\gamma \yb^2 + \sum_{i=0}^m \ub_i b_i,\qquad  \tilde r :=\sum_{i=1}^m v_i b_i \delta y -3\gamma \yb(\delta y)^2- \gamma (\delta y)^3.
\end{align}
\if{
\begin{lem}
\label{delta-y-minus-z.l}
We have that
\be
\label{delta-y-minus-z.l-1}
\| \eta\|_Y = O \left( \| v\|_2 \| \delta y\|_Y + 
 \| \delta y\|^2_{L^4(Q)}\right).
\ee
\be
\textcolor{cyan}{\| \eta\|_Y = O \left( \| v\|_2 \| \delta y\|_{ L^{\infty}(0,T;L^2(\Om))} + 
 \| \delta y\|^2_{L^4(Q)}\right)}.
\ee

\end{lem}

\begin{proof}
Combining Theorem \ref{sonc:setting-t1}(i)
(which implies that $|\delta y|$
is uniformly bounded)
and relation \eqref{d_1-equ}, we find that 
\be
\| \eta\|_Y 
= O \left(
\| v \delta y \|_2+\| \yb (\delta y)^2 \|_2 + 
\| (\delta y)^3 \|_2 \right)
= O \left(
\| v \delta y \|_2 +\| \delta y \|^2_{L^4(Q)} \right).
\ee
By Lemma \ref{lem-uby},
\be
\| v \delta y \|_2 
\leq
\| v\|_2 \| \delta y\|_{L^\infty(0,T;L^2(\Om))}  
\leq
\| v\|_2 \| \delta y\|_Y.
\ee
The conclusion follows. 
\end{proof}
}\fi
\if{
\begin{lem}
 The weak solution of \eqref{d_1-equ} coincides with a mild solution in $C(0,T;L^2(\Om))$.
\end{lem}
\begin{proof}
 The statement can be easily derived from \cite{Ball:1977} taken into account that here we have a zero order term with coefficient depending on space and time.
\end{proof}
}\fi
\if{\begin{proof}
 Let $\underline{r}(t):=r(t,\cdot)$ and $\underline{\tilde r}(t):=\tilde r(t,\cdot)$. Both functions belong to $L^1(0,T;L^2(\Om))$. We consider the very weak formulation of \eqref{d_1-equ}:
 $\eta(0)= 0$ and for any 
$\phi \in \dom(-\Delta)$,  the function $t\mapsto \la \phi , \eta(t)\ra$ 
is absolutely continuous over $[0,T]$ and satisfies
\begin{align}
\ddt\la \phi , \eta(t)\ra+\la -\Delta \phi , \eta(t)\ra 
= \la \phi , \underline{\tilde r}(t) + \underline{r}(t)\eta(t)\ra,
\;\; \text{for a.a. $t\in [0,T]$.}
\end{align}
 Define $g(t):=\underline{\tilde r}(t) + \underline{r}(t)\eta(t)$ and $\psi(t):=\int_0^t e^{(t-s)\Delta}g(s) \dd s$. Then, by Ball \cite{Ball:1977} $\psi$ is the unique very weak solution of
 \begin{align}
\dot \psi(t)  -\Delta \psi(t)= g(t),
\;\; \text{for a.a. $t\in [0,T],\quad \psi(0)=0$}
\end{align}
which is given by 
\begin{align}
\ddt\la \psi , \eta(t)\ra+\la -\Delta \psi , \eta(t)\ra 
= \la \psi , g(t)\ra,
\;\; \text{for a.a. $t\in [0,T]$.}
\end{align}
By the uniqueness of the very weak solution we obtain $\psi=\eta$ which concludes the proof.
\end{proof}}\fi

\begin{prop}
\label{prop:expansion}
Let $(p,\mu)\in \Lambda_1$, and let
 $(u,y)$ be a trajectory. 
Then 
\begin{multline}
\label{DeltaLformula}
\call [p,\mu](u,y,p) - \call
[p,\mu](\ub,\yb,p)\\
 = \int_0^T  \Psi^p(t) \cdot v(t) \dd t + \half \calq[p](\delta y,v) - 
\gamma
\int_Q  p (\delta y)^3 \dd x\dd t.
\end{multline}
Here, we omit the dependence of the Lagrangian on $(\beta,p_0)$ being equal to $(1,p(\cdot,0))$.
\end{prop}

\begin{proof}
Use $\Delta \call$ to denote the l.h.s. of \eqref{DeltaLformula}. We have
\be
\label{DeltaL}
\begin{aligned}
 \Delta & \call =\, J(u,y)-  J(\ub,\yb)  +\int_Q p \left( -\frac{\dd}{\dd t}\delta y +\Delta \delta y- \gamma (y^3-\yb^3) \right)\dd x \dd t\\
 & +\int_Q p\left(\sum_{i=1}^m v_ib_iy+\sum_{i=0}^m \ub_i b_i\delta y  \right)\dd x \dd t  
+\sum_{j=1}^q \int_0^T \int_\Om c_j \delta y \, \dd x \dd \mu_j(t)
\\ =&    \int_Q \delta y\left( \half\delta y+\yb-y_d \right) \dd x\dd t+
   \int_\Omega \delta y (x,T)\Big( \half\delta y(x,T)+\yb(x,T)-y_{dT}(x) \Big) \dd x\\
&+\sum_{i=1}^m\alpha_i\int_0^T v_i \dd t+ \int_Q p\left( -\frac{\dd}{\dd t}\delta y+\Delta \delta y-\gamma(\delta y^3 +3\yb \delta y^2+3\yb^2 \delta y)\right)\dd x\dd t
 \\  & + \int_Q p \left( \sum_{i=1}^m v_ib_iy+\sum_{i=0}^m \ub_i b_i\delta y  \right)+\sum_{j=1}^q \int_0^T \int_\Om c_j \delta y \, \dd x \dd \mu_j(t)
.\end{aligned}
\ee
By \eqref{costate-eq} we obtain
\be
\label{pdotdeltay}
\begin{split}
\int_Q p \frac{\dd}{\dd t}\delta y\, \dd x\dd t = &  -\int_Q pA\,\delta y\,\dd x\dd t + \sum_{j=1}^q \int_0^T \int_\Om c_j \delta y \, \dd x \dd \mu_j(t)\\
&+    \int_Q \delta y\left(\yb-y_d \right) \dd x\dd t+    \int_\Omega \delta y (x,T)\left( \yb(x,T)-y_{dT}(x) \right) \dd x.
\end{split}
\ee
Thus, from \eqref{DeltaL} and \eqref{pdotdeltay} we get
\begin{multline}
\Delta \call = \half    \int_Q \delta y^2 \dd x\dd t+
\half    \int_\Omega \delta y (\cdot,T)^2  \dd x + \sum_{i=1}^m\alpha_i\int_0^T v_i \dd t\\
+ \int_Q p\left( -\gamma [\delta y^3 +3\yb \delta y^2]+  \sum_{i=1}^m v_ib_iy \right)\dd x\dd t,
\end{multline}
which leads to \eqref{DeltaLformula} in view of the definition of $\Psi^p_i$ given in \eqref{def-psi}. This concludes the proof.
\finsquare

\subsection{Critical directions}

Recall the definitions of $\check{I}_i,\hat{I}_i$ and $I^C_j$ given in \eqref{defI} and \eqref{defCj}, and remember that we use $z[v]$ to denote the solution of the linearized state equation \eqref{lineq2}  associated to $v.$

Let us define the {\em cone of critical directions}
at $\ub$ in $L^2$, or in short {\em critical cone,} by
\be
C:=
\left\{
\begin{split}
& (z[v],v)\in Y\times L^2(0,T)^m;\\
& v_i(t)\Psi_i^p(t)=0\, \text{ a.e. on } [0,T],\, \text{for all } (p,\mu)\in \Lambda_1 \\ 
&v_i(t)\geq 0
\text{ a.e. on }  \check{I}_i,\,
 v_i(t)\leq 0\, \text{ a.e. on } \hat{I}_i,\, \text{ for } i=1,\dots,m, \\
& \int_\Om c_j(x) z[v](x,t)\dd x\leq 0 \text{ on } I^C_j, \, \text{ for } j=1,\dots,q
\end{split}
\right\}.
\ee
The {\em strict critical cone} is defined below, and it is obtained by imposing 
that the linearization of active constraints is zero,
\be
C_{\rm s}:=
\left\{
\begin{split}
& (z[v],v)\in Y\times L^2(0,T)^m;\; v_i(t)=0\, 
\text{ a.e. on }  I_i, \text{ for } i=1,\dots,m, 
\\
&
\int_\Om c_j(x) z[v](x,t)\dd x  = 0\, \text{ on }I^C_j, \text{ for }  j=1,\dots,q
\end{split}
\right\}.
\ee
Hence, clearly $C_{\rm s} \subseteq C,$ and $C_{\rm s}$ is a closed subspace of $Y\times\Uspace.$
Now, note that in the interior of each $I^C_j$ one has, for every $(z[v],v)\in C_{\rm s},$
\be
\begin{aligned}
0&=\ddt \big( g'_j(\yb(\cdot,t)) z[v](\cdot,t) \big) =  \ddt  \int_\Om c_j(x) z[v](x,t) \dd x  \\
&= \int_\Om c_j(x)  \dot z[v](x,t)  \dd x 
=\int_\Om c_j(x)  \Big({-{(Az[v])}(x,t)} + (v(t)\cdot b(x)) \yb(x,t) \Big) \dd x,
\end{aligned}
\ee
which can be rewritten as
\be
\label{dotgz}
\sum_{i=1}^m v_i(t) M_{ij}(t) 
= \int_\Om c_j(x) {(Az[v])}(x,t) \dd x,
\ee
in view of the definition of $M_{ij}$ given in \eqref{Mij}.
Therefore,  over any arc $(a,b)$ we have $g'_j(\yb(\cdot,t)) z[v](\cdot,t) =0$
for $t\in (a,b)$ if and only if $g'_j(\yb(\cdot,a)) z[v](\cdot,a) =0$
and \eqref{dotgz} holds over~$(a,b)$. 
We define the {\em entry (resp. exit) point} of a time interval
$(t',t'')$ as $t'$ (resp. $t''$). 
{This induces the consideration of the following sets}
\begin{equation}
C_{\rm e} := \left\{ 
\begin{split}
&(z[v],v)\in Y\times L^2(0,T)^m;\;\\
&g'_j(\yb(\cdot,\tau_k)) z[v](\tau_k)  =0,\, \text{if } j\in C_k,\, \text{for } k=0,\dots,r-1
\end{split}
\right\},
\end{equation}
\begin{equation}
C_{\rm n}:=
\left\{
\begin{split}
& (z[v],v)\in Y\times L^2(0,T)^m;\; v_i(t)=0 \,
\text{ a.e. on }  I_i,\;\text{for } i=1,\dots,m,\\
&
\sum_{i=1}^m v_i(t) M_{ij}(t) 
= \int_\Om c_j(x) {(Az[v])}(x,t) \dd x\text{ a.e. on }I^C_j, \text{for } j=1,\dots,q
\end{split}
\right\}.
\end{equation}
With these definitions, we can write the strict critical cone as
\be
C_{\rm s} = C_{\rm e} \cap C_{\rm n},
\ee
and prove the following result.

\begin{lem}
\label{cs-cap-inf.l}
{$C_{\rm s} \cap \Big(Y\times L^\infty(0,T)^m\Big)$
is dense in  $C_{\rm s},$ with respect to the $Y\times L^2(0,T)^m$-topology}.
\end{lem}

\begin{proof}
In view of Dmitruk's density lemma (see \cite[Lemma 1]{Dmi08}),
it is enough to prove that $C_{\rm n} \cap \Big( Y\times L^\infty(0,T)^m\Big)$
is a dense subset of $C_{\rm n}$. 

Let us then take $(z,v)\in C_{\rm n}.$
Recall the definition of the junction times $\tau_k$ given after equation \eqref{Mij}.  Fix $k\in\{0,\dots,r-1\}.$
Note that we can take a partition of $[0,T]$, say
$0=t_0\leq \dots \leq t_{\ell} \leq \dots \leq  t_N=T$, 
such that $(t_{\ell},t_{\ell+1})$ is contained in some 
$(\tau_k,\tau_{k+1})$, and on $(t_{\ell},t_{\ell+1})$ a fixed set of the rows of
$M(t)$ is linearly independent with rank equal to the one
of $M(t)$. Now consider the matrix $\bar M_k$ given after \eqref{Mij}.
Using the same notation as in
\eqref{gct-ct}, let us write 
$v_{\bar B_k}$ to refer to the restriction
of $v$ to the components in $\bar B_k.$ 
For each $t\in (t_{\ell},t_{\ell+1})$, we can write
\be
v_{\bar B_k}(t) = v_{\bar B_k,0}(t) +v_{\bar B_k,1}(t),
\ee
where $v_{\bar B_k,0}(t)\in \Ker \bar{M}_k(t)^\top$ and $v_{\bar
  B_k,1}(t)\in \Im \bar{M}_k(t)$ for almost all $t,$  hence 
$v_{\bar  B_k,1}(t)= \bar{M}_k(t) \lambda_{k}(t)$ for some $\lambda_{k}(t) \in \cR^{|C_k|}.$
Let
$E_{C_k}(t)$ be the $|C_k|$-dimensional vector with components
\be
E_{C_k,j}(t):=\int_\Om c_j(x) {(Az)}(x,t) \dd x, \quad j\in C_k. 
\ee
 Then \eqref{dotgz}
can be rewritten as 
\be
E_{C_k}(t) = \bar{M}_k(t)^\top v_{\bar B_k}(t) = 
\bar{M}_k(t)^\top v_{\bar  B_k,1}(t)
=
\bar{M}_k(t)^\top \bar{M}_k(t) \lambda_k(t), 
\ee
and, therefore, $\lambda_k(t) = 
\big(\bar{M}_k(t) ^\top\bar{M}_k(t) \big)^{-1} E_{C_k}(t),$
so that
\begin{align} 
\label{v-prime-1}
v_{\bar B_k,1}(t) = 
\bar{M}_k(t) \lambda_k(t)
=
\bar{M}_k(t)
\big(\bar{M}_k (t)^\top\bar{M}_k(t) \big)^{-1}
E_{C_k}(t).
\end{align}
By an integration by parts (in space) argument, it follows
that $E_{C_k}(t)$ is a continuous function, and so is
$\bar{M}_k(t)$. Therefore,
$v_{\bar B_k,1}$ is continuous on each maximal arc.
We may also view the application 
$z\mapsto v_{\bar B_k,1}$ 
as a linear and continuous mapping say
\be
L_1: \;\; Y \rar \prod_{k=0}^{r-1} \Lip(\tau_k,\tau_{k+1})^{|C_k|}
\ee
where $C_k$ is the set of active state constraints on 
$(\tau_k,\tau_{k+1})$ and, for $t'<t''$,
$\Lip(t',t'')$ is the Banach space of
continuous real functions with domain $(t',t'')$,
endowed with the norm
\be
\|f\|_{\Lip(t',t'')} :=
\sup_{t\in (t',t'')} |f(t)| +
\sup_{t,\tau\in (t',t'')} 
\frac{|f(t)-f(\tau)| }{|t-\tau|},
\ee
with the convention ``$0/0=0$''.

For any $\eps>0,$  there exists 
$v_{\bar B_k,0}^\eps$ in $L^\infty(0,T)^{|B_k|}$ such that
$\|v_{\bar B_k,0}^\eps-v_{\bar B_k,0}\|_2<\eps$, 
it has zero components for indexes corresponding to active
control bound constraints, and 
$v_{\bar B_k,0}^\eps (t)\in\Ker \bar{M}_k(t)^\top$ for a.a. $t.$
In fact, to construct this $v_{\bar B_k,0}^\eps$ it suffices to project 
an approximation of $v_{\bar B_k,0}$
obtained by a truncation argument on the kernel $\Ker \bar{M}_k(t)^\top$.
In what follows we shall abuse notation and use the same symbol to denote a vector and its canonical immersion in $\cR^m.$
Let $z_\eps$ be the unique solution in $Y$ of the linearized equation 
\be
\dot z_\eps + A z_\eps  = \sum_{i=1}^m (L_1({z_\eps}) + v_{\bar B,0}^\eps +v_B)_i  b_i\, \yb,
\ee
with the usual initial and boundary conditions, and where $v_B$ is the restriction of $v$ to the set $B.$ 
Set  $v_{\bar B,1}^{\eps}:=L_1(z_\eps),$  $v_{\bar B_k}^{\eps}:=v_{\bar B_k,1}^{\eps}+v_{\bar B_k,0}^\eps,$ and define $v_\eps$ to have the restriction to $\bar B_k$ equal to $v_{\bar B_k}^{\eps}$ and the restriction to $B_k$ equal to $v.$
Then $v_\eps$ is in $C_{\rm n}\cap (Y\times L^{\infty}(0,T)^m)$ and $\|v_\eps-v\|_2=O(\eps).$ Hence, $C_{\rm n} \cap \Big( Y\times L^\infty(0,T)^m\Big)$
is a dense subset of $C_{\rm n}$.
The conclusion follows.
\finsquare

\subsubsection{Radiality of critical directions}

According to Aronna {\em et al.} \cite[Definition 6]{MR3555384}, a critical direction $(z,v)$ is {\em quasi radial} if there exists $\tau_0>0$ such that, for $\tau\in [0,\tau_0],$ the following conditions are satisfied:
\begin{gather}
\label{max-g-o2}
\max_{t\in [0,T]} \left\{ g_j(\yb(\cdot,t)) + \tau g_j'(\yb(\cdot,t)) z(t)\right\}  = o(\tau^2),\quad \text{for } j=1,\dots,q,\\
\label{qr2}
\umin_i \leq \ub_i(t) +\tau v_i(t) \leq \umax_i,\quad \text{a.e. on } [0,T],\quad \text{for } i=1,\dots,m.
\end{gather}

\begin{lem}
\label{hyp-g-sharp.l}
Every direction in $C_{\rm s}\cap \Big( Y\times L^\infty(0,T)^m \Big)$ is quasi radial.
\end{lem}

\begin{proof}
Let $(z,v) \in C_{\rm s}\cap \Big( Y\times L^\infty(0,T)^m \Big).$ 
Then \eqref{qr2}  follows from \eqref{hyp-geom}.
Let us next prove \eqref{max-g-o2}.
The function $h(t) := g'_j(\yb(t))z(t)$ has
the derivative $\dot h(t) = \int_\Om c_j(x) \dot z(x,t) \dd x$,
so that 
$|\dot h(t) | \leq \|c_j\|_{L^2(\Om)} \|\dot z(\cdot,t)\|_{L^2(\Om)}$ and hence, $\dot h 
        \in L^2(0,T)$. 
Let $0\leq t' < t'' \leq T$. 
By the Cauchy-Schwarz inequality,
for any $\eps>0$: 
\be
| h(t'')-h(t') | \leq \int_{t'}^{t''} |\dot h(t)| \dd t 
\leq \sqrt{t''-t'} \| \dot h\|_{L^2(t',t'')}. 
\ee
Let $(a,b)$ be a maximal constrained arc with say $a>0$.
Take $t'<a$,  and $t''=a$.
When $t'\uparrow a$, by the Dominated Convergence Theorem, 
$\|\dot{ h}\|_{L^2(t',t'')} \rar 0$.
Given $\eps>0$, we deduce with \eqref{hyp-g-sharp}
that for $\tau>0$ and $t'<a$ close enough to $a$:
\be
\label{hyp-g-sharp.l-1}
g_j(\yb(\cdot,t)) + \tau g_j'(\yb(\cdot,t)) z(t) \leq -c(a-t) + \tau \eps \sqrt{a-t},    
\quad \text{for all $t\in (t',a)$.}
\ee
The maximum of the r.h.s. of \eqref{hyp-g-sharp.l-1} over
$t\in [a-\eps,a]$
is attained when
\be
c \sqrt{a-t} = \half \tau\eps,
\qquad
a-t = \frac{\tau^2\eps^2}{4c^2}.
\ee
So the r.h.s. of \eqref{hyp-g-sharp.l-1}
is less or equal than $\tau^2\eps^2/(4c)$. 
Since we can take $\eps$ arbitrarily small, 
it is of order $o(\tau^2)$. 
For $t>b$ close to $b$, we have a similar result.
For $t$ far from the boundary, \eqref{max-g-o2}
is a consequence of hypothesis \eqref{hyp-g-sharp}.
The conclusion follows.
\finsquare

Combining the previous result with Lemma
\ref{cs-cap-inf.l}, we deduce that:

\begin{cor}
\label{qrd-dense-coro}
The set of quasi radial critical directions of $C_{\rm s}$ is dense in $C_{\rm s}.$
\end{cor}

\if{ 
\subsubsection{Application of Lemma \ref{hyp-g-sharp.l}}
Let the state constraint have the more general form
\be
g_j(y(\cdot,t)) = \int_\Om \gamma_j(y(x,t)) \dd x
\ee
We look for conditions on $\gamma_j$
for which $h(t) := g'(\yb(t))z(t)$ has a derivative in $L^2(0,T)$,
so that the Lemma \ref{hyp-g-sharp.l} applies. 
Formally
\be
h_j(t) = g'_j(y)z = \int_\Om \gamma'_j(y(x,t)) z(x,t) \dd x
\ee
and
\be
\dot h_j = \int_\Om \left( 
\gamma''_j(y) \dot y z 
+
\gamma'_j(y) \dot z\right)\dd x
\ee
Since $\dot y$ and $\dot z$ belong to $L^2(Q)$, 
we need that $\gamma''_j(y) = 0$ and that
$\gamma'_j(y) $ is essentially bounded.
Therefore it is natural to assume 
\eqref{stateconstraint}, with  $c_j$ in $L^\infty(\Om)$ for each $j.$

\begin{prop}
\label{derg'}
 If $c_j\in L^\infty(\Omega)$ for all $j=1,\dots,\ell,$ then $t\mapsto g'_j(\yb(t))z(t)$ has derivative in $L^2(0,T).$
\end{prop}
} \fi 

\subsection{Second order necessary condition}
{We obtain the following result applying Corollary \ref{qrd-dense-coro} above and the second order condition in an abstract setting proved in \cite[Theorem 8]{MR3555384}.}
\begin{theorem}[Second order necessary condition]
\label{SONC}
Let the admissible trajectory $(\ub,\yb)$ be an $L^\infty$-local solution of $(P)$. Then
\be
 \max_{ (p,\mu) \in \Lambda_1 } \calq[p] (z,v)\geq 0, 
\qquad \text{for all } (z,v) \in C_{\rm s}.
\ee
\end{theorem}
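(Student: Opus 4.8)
The plan is to pass to the reduced problem \eqref{RP}, for which $\ub$ is an $L^\infty$-local solution, and to apply the abstract second order necessary condition of \cite[Theorem 8]{MR3555384} along quasi-radial critical directions, using a density argument to recover all of $C_{\rm s}$. First I would note that for fixed $(z,v)$ the map $(p,\mu)\mapsto\calq[p](z,v)$ is affine in $p$ and weakly-$*$ continuous on the bounded, weakly-$*$ compact set $\Lambda_1$ (by Lemma \ref{multipliers}(ii) and the qualification \eqref{qualif-P} in Hypothesis \ref{hyp-setting}), since the only $p$-dependent contribution is $\int_Q p\,(-6\gamma\yb z^2+2\sum_i v_ib_iz)$ with integrand in $L^1(Q)$; hence the maximum is attained and, the functions $((p,\mu),(z,v))\mapsto\calq[p](z,v)$ being jointly continuous (weak-$*$ in $(p,\mu)$, norm in $(z,v)$, using the uniform bound on $\Lambda_1$), the map $(z,v)\mapsto\max_{(p,\mu)\in\Lambda_1}\calq[p](z,v)$ is continuous for the $Y\times L^2(0,T)^m$ topology. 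By Corollary \ref{qrd-dense-coro} the set of quasi-radial critical directions is dense in $C_{\rm s}$; in fact, by Lemmas \ref{hyp-g-sharp.l} and \ref{cs-cap-inf.l} it contains the dense subset $C_{\rm s}\cap\bigl(Y\times L^\infty(0,T)^m\bigr)$. So it suffices to prove the inequality for $(z,v)=(z[v],v)\in C_{\rm s}$ with $v\in L^\infty(0,T)^m$.

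\textbf{Step 2 (second order expansion of the reduced Lagrangian).} Recall that $\call[1,p,p(\cdot,0),\mu](u,y[u])$ equals the reduced Lagrangian $L[1,\mu](u)$ of \eqref{LangrianG}, which is $C^\infty$ on $L^2(0,T)^m$ because $u\mapsto y[u]$ is (Lemma \ref{state-equ-estimA.l}). Fix $(p,\mu)\in\Lambda_1$ and $v\in L^\infty(0,T)^m$, and set $u_\tau:=\ub+\tau v$, $\delta y_\tau:=y[u_\tau]-\yb$. By Theorem \ref{sonc:setting-t1}(i) one has $\delta y_\tau\to0$ uniformly on $\bar Q$, while by the (local Lipschitz, indeed $C^\infty$) dependence of $y[\cdot]$ we get $\|\delta y_\tau\|_Y=O(\tau)$; writing $\eta_\tau:=\delta y_\tau-\tau z[v]$, equation \eqref{d_1-equ} and Lemma \ref{LemmaEstimatez}-type estimates yield $\eta_\tau=O(\tau^2)$ in $C([0,T];L^2(\Om))$. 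Inserting $u=u_\tau$ into Proposition \ref{prop:expansion}, and using that $\calq[p]$ is quadratic in its first argument and linear in the second (so $\calq[p](\tau z[v]+\eta_\tau,\tau v)=\tau^2\calq[p](z[v],v)+O(\tau^3)$), together with $|\gamma\int_Q p(\delta y_\tau)^3|\le\gamma\|p\|_{L^\infty(Q)}\|\delta y_\tau\|_{L^\infty(Q)}\|\delta y_\tau\|_{L^2(Q)}^2=o(\tau^2)$, I obtain
\begin{equation*}
L[1,\mu](u_\tau)-L[1,\mu](\ub)=\tau\int_0^T\Psi^p(t)\cdot v(t)\,\dd t+\frac{\tau^2}{2}\,\calq[p](z[v],v)+o(\tau^2),
\end{equation*}
all estimates being uniform over $\Lambda_1$, which is bounded in $L^\infty(Q)\times W^{1,\infty}(0,T)^q$ by Theorem \ref{sonc:setting-t1}(ii). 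Consequently $D^2_{uu}L[1,\mu](\ub)(v,v)=\calq[p](z[v],v)$.

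\textbf{Step 3 (application of the abstract condition and conclusion).} Since $(\ub,\yb)$ is an $L^\infty$-local solution of $(P)$, $\ub$ is an $L^\infty$-local solution of \eqref{RP}. For $(z,v)\in C_{\rm s}$ one has $v_i=0$ a.e.\ on $I_i$, and by the first order conditions \eqref{coco-coco}, $\{\Psi^p_i\neq0\}\subset I_i$; hence $D_uL[1,\mu](\ub)v=\int_0^T\Psi^p\cdot v\,\dd t=0$, and moreover $g_j'(\yb(\cdot,t))z[v](\cdot,t)=0$ on $I^C_j$, so $v$ is a critical direction for \eqref{RP}. For $v\in L^\infty(0,T)^m$, Lemma \ref{hyp-g-sharp.l} shows $(z[v],v)$ is quasi-radial. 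Since $\Lambda_1$ is nonempty and weakly-$*$ compact, the abstract second order necessary condition \cite[Theorem 8]{MR3555384} applies and gives $\max_{(p,\mu)\in\Lambda_1}D^2_{uu}L[1,\mu](\ub)(v,v)\ge0$, that is $\max_{(p,\mu)\in\Lambda_1}\calq[p](z[v],v)\ge0$. Combining this with the density reduction of Step 1 yields the inequality for every $(z,v)\in C_{\rm s}$.

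\textbf{Main obstacle.} The delicate point is matching the exact finite expansion of Proposition \ref{prop:expansion} with the hypotheses of the abstract theorem: one must check that the curvature (``$\sigma$-term'') contribution of the non-polyhedral cone $K=C([0,T])^q_-$ is annihilated along quasi-radial directions — this is exactly the purpose of \eqref{hyp-g-sharp} and Lemma \ref{hyp-g-sharp.l} — that the cubic remainder and the quadratic error $\eta_\tau$ are genuinely $o(\tau^2)$ \emph{uniformly} over the bounded multiplier set $\Lambda_1$ (which uses the uniform $L^\infty(Q)$-bounds on states from Theorem \ref{sonc:setting-t1} and on costates from Theorem \ref{sonc:setting-t1}(ii)), and that the maximum over $\Lambda_1$ survives passage to the $C_{\rm s}$-closure. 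The remaining estimates are routine given Theorem \ref{sonc:setting-t1} and Lemmas \ref{state-equ-estimA.l} and \ref{LemmaEstimatez}.
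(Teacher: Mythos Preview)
Your proof is correct and follows essentially the same route as the paper: both invoke \cite[Theorem 8]{MR3555384} on quasi-radial directions supplied by Corollary \ref{qrd-dense-coro} and then pass to the $C_{\rm s}$-closure. The only difference is organizational --- you argue continuity of $(z,v)\mapsto\max_{\Lambda_1}\calq[p](z,v)$ up front, whereas the paper picks for each approximating $(z^\ell,v^\ell)$ a multiplier $(p^\ell,\mu^\ell)\in\Lambda_1$ with $\calq[p^\ell](z^\ell,v^\ell)\ge0$, extracts a weakly convergent subsequence, and uses the strong/weak pairing (via Lemma \ref{lem-uby}) to pass to the limit; the underlying mechanism is the same.
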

\begin{proof}
 Let $(z,v)\in C_{\rm s}.$ 
By Corollary \ref{qrd-dense-coro},
there exists a sequence $(z^{\ell},v^{\ell})$ of quasi radial directions converging
to
$(z,v)$ in $Y\times L^2(0,T)^m$.
 Doing as in 
\cite[Theorem~8]{MR3555384}, 
we get the existence of a multiplier 
$(p^{\ell},\mu^{\ell}) \in \Lambda_1$ (with $\Lambda_1$ defined in Section~\ref{sec:red-prob}), such that
 \be
 \calq[p^{\ell}](z^\ell,v^{\ell}) \geq 0.
 \ee
By Lemma \ref{multipliers}, $\Lambda_1$ 
is bounded so that $\dd\mu^{\ell}$ is also bounded.
Extracting if necessary a subsequence, we may
assume that
$\dd\mu^{\ell}$  weakly-$*$ converges to some 
$\dd\mu$ with $\mu \in BV(0,T)^q_{0,+}$, and
since $L^\infty(0,T,H^1_0(\Om))$ is included in $L^2(Q)$,
$p^\ell$ weakly converges in $L^2(Q)$ to some
$p\in L^2(Q)$, such that $(p,\mu) \in \Lambda_1$.
Since $(z^\ell,v^{\ell})\rar (z[v],v)$ in $Y\times L^2(0,T)^m$,
by lemma \ref{lem-uby},
$\sum_i v^\ell_i b_i z^\ell $
strongly converges to
$\sum_i v_i b_i z$, 
and so we easily deduce that 
$\calq[p^{\ell}](z^\ell,v^{\ell}) \rar
\calq[p](z[v],v).$
The conclusion follows.
\finsquare

\if{

\subsection{Goh transform} 
Given a critical direction $(z,v)$, set 
\be
\label{Goh}
w(t) := \int_0^t v(s) \dd s; \quad
B(x,t) := \yb(x,t)  b(x);
\quad
\zeta(x,t) = z(x,t)  - B(x,t) \cdot w(t).
\ee
Then $\zeta$ satisfies the initial and
boundary conditions
\be
\begin{aligned}
\label{boundaryzeta}
\zeta(x,0)&=0,&& \text{for } x\in \Omega,\\
\zeta(x,t)& =0,&&\text{for } (x,t)\in \Sigma.
\end{aligned}
\ee
Remembering the definition \eqref{lin-state-equ}
of the operator $A$, we obtain that 
\be
\dot \zeta + A \zeta = 
\left(\dot z+ A z - \sum_{i=1}^m v_i B_i \right) 
- \sum_{i=1}^m w_i ( A B_i +\dot B_i),\quad \zeta(\cdot ,0)=0.
\ee
In view of the linearized state equation
\eqref{lineq2}, the term between the
large parentheses  in the latter equation vanishes. Since 
$\dot B_i = b_i \dot \yb$ 
it follows that 
\be
\label{zeta}
\dot \zeta(x,t)  
+ {(A\zeta)} (x,t) = B^1(x,t) \cdot w(t),\quad \zeta(\cdot ,0)=0, 
\ee 
where 
\be
\label{B1}
B^1_i := -f b_i +2\nabla\yb \cdot \nabla b_i + \yb \Delta b_i - 2\gamma\yb^3b_i,\quad \text{for }i=1,\dots ,m.
\ee

{Equation \eqref{zeta} is well-posed 
since $b\in W^{2,\infty}(\Omega)$, and the  solution $\zeta$ belongs to $Y$.} We use $\zeta[w]$ to denote the solution of \eqref{zeta} corresponding to $w.$

\subsection{Goh transform of the quadratic form} 
Recall that $(\ub,\yb)$ is a feasible trajectory.
Let $\pb=p[\ub]$ be the costate associated to $\ub,$ and set
\be
W:=Y\times L^2(0,T)^m \times \RR^m.
\ee
Let $S(t)$ be the time dependent
symmetric $m\times m-$matrix with generic term
\be
S_{ij}(t) := \int_\Om b_i(x) b_j(x) p(x,t) \yb(x,t) \dd x,
\quad \text{for }1 \leq i,j\leq m.
\ee 
Set
\be\label{chi}
\chi := \ddt (p\yb)
= pf+
p \Delta \yb - \yb \Delta  p
+ p\big( \yb\varphi'(\yb) - \varphi(\yb) \big)
- \yb(\yb-y_d)
- \yb \sum_{j=1}^q c_j \dot \mu_j.
\ee
Observe that
\be
\dot S_{ij}(t) =
\int_\Om b_i b_j 
\ddt (p\yb) \dd x
= 
\int_\Om b_i b_j 
\chi \dd x.
\ee
Since $\yb$, $p$ 
belong to $L^\infty(0,T,H^1_0(\Om))$,
and 
$y_d$, $\varphi(\yb)$, $\dot \mu$
are essentially bounded, 
integrating by parts the terms in \eqref{chi} involving the Laplacian
operator  and using \eqref{sonc:setting-t1-1}, we obtain that 
$\dot S_{ij}$ is essentially bounded.
So we can define 
the continuous quadratic form on $W:$ 
\be
\label{def-hatq}
\whq[p,\mu] (\zeta,w,h) :=  \int_0^T \hat q_I(t) \dd t + \hat q_T,
\ee
where
\begin{multline}
\label{def-hatq1}
\hat q_I :=  \disp 
\int_\Om \kappa \left(\zeta+ \sum_{i=1}^m b_i w_i \right)^2  \dd x
-w^\top \dot S w\\
 -2 \sum_{i=1}^m  w_i \int_\Om  
\Big[ \zeta \big( -\Delta b_i p - 2\nabla b_i\cdot \nabla p+b_i(\yb-y_d)
+ b_i\sum_{j=1}^q c_j \dot\mu_j \big) - p B^1\cdot w\Big]\dd x,
\end{multline}
and
\begin{multline}
\label{def-hatq2}
\hat q_T:=  \\
\int_\Om \left[ \Big(\zeta(x,T)+ \sum_{i=1}^m h_i b_i(x)\Big)^2  + 2 \sum_{i=1}^m h_i  b_i (x) p(x,T) \zeta (x,T) \right]\dd x
 +
h^\top  S(T) h.
\end{multline}

\begin{lem}[Transformed second variation]\label{lem:transf-sec-var}
For $v\in L^2(0,T)^m,$ and $w\in AC([0,T])^m$ given by the Goh transform \eqref{Goh}, {and for all $(p,\mu)\in \Lambda_1$,} we have
\be
\calq[p,\mu](z[v],v) = \whq[p,\mu] (\zeta[w], w , w(T)).
\ee
\end{lem}

\begin{proof}
We first replace 
$z$ by $\zeta + B\cdot w = \zeta + \yb \sum_{i=1}^m w_i b_i$ {in $\calq$}, and define
\begin{multline}
\label{wtq}
{\wtq}:= \int_Q \Big[ \kappa( \zeta + \yb \sum_{i=1}^m w_i b_i )^2
+2p\sum_{i=1}^m v_ib_i(\zeta + \yb \sum_{j=1}^m w_j b_j)\Big]\dd x\dd t\\
+\int_\Omega\big(\zeta(T) + \yb(T) \sum_{i=1}^m w_i(T) b_i\big)\dd x.
\end{multline}
We aim at {proving that $\wtq$ coincides with $\whq.$ This will be done} removing the dependence on $v$ from the above expression.  
For this, we have to deal with the bilinear term {in $\wtq$}, namely with
\be
\label{hatcalqb}
\wtq_b :=\wtq_{b,1} + {2\sum_{i=1}^m {\wtq_{b,2i}},}
\ee
where, omitting the dependence on the multipliers for the sake of simplicity of the presentation, 
\be
\label{hatcalqb12}
{\wtq_{b,1}}:= 2
\int_0^T v^\top S  w  \dd t\quad \text{and }\quad
{{\wtq_{b,2i}} := \disp
\int_0^T v_i \int_\Om b_i  p \zeta  \dd x  \dd t,\,\,\text{for } i=1,\dots,m.}
\ee
Concerning $\wtq_{b,1}$,
since $S$ is symmetric, we get, integrating by parts,
\be
\label{wtqb1}
{\wtq_{b,1}} =\left[w^\top S w\right]_0^T - \int_0^T w^\top \dot S w \dd t.
\ee
Hence $\wtq_{b,1}$ is a function of $w$ and $w(T)$.
Concerning $\wtq_{b,2i}$ defined in \eqref{hatcalqb12}, integrating by parts, we get
\be
\label{Qi''2}
\wtq_{2,bi}
=
w_i(T) \int_\Om b_i p(x,T) \zeta (x,T) \dd x 
-
\int_0^T w_i \int_\Om  b_i  \ddt \big( p\zeta  \big)\dd x \dd t .
\ee
For the derivative inside the latter integral, one has
\be\label{dtpzeta}
\ddt \big( p(x,t)\zeta (x,t) \big) = 
-\Delta p \zeta + p\Delta \zeta-\zeta\left(
(\yb-y_d)+\sum_{j=1}^q c_j \dot\mu_j\right)+p B^1\cdot w.
\ee
By Green's Formula:
\be\label{intDeltapzeta}
\int_Q w_ib_i\big(-\Delta p \zeta + p\Delta \zeta\big)\dd x\dd t =
\int_Q w_i \big( \Delta b_i p +2\nabla b_i\cdot \nabla p) \zeta \dd x\dd t.
\ee
Using \eqref{dtpzeta} and \eqref{intDeltapzeta} in {the expression} \eqref{Qi''2} yields
\begin{multline}
\label{wtqb2i}
\wtq_{b,2i}=  
w_i(T) \int_\Om b_i p(x,T) \zeta (x,T) \dd x 
+\int w_i \Big[ \zeta \big( -\Delta b_i p - 2\nabla b_i\cdot \nabla p\\
 + b_i(\yb-y_d) + b_i\sum_{j=1}^q c_j \dot\mu_j \big) - p B^1\cdot w\Big]\dd x\dd t.
\end{multline}
Hence, $\wtq_{b,2}$ is a function of $(\zeta,w,w(T))$. Finally, 
{putting together \eqref{wtq}, \eqref{hatcalqb}, \eqref{wtqb1} and \eqref{wtqb2i} yields an expression for $\wtq$ that does not depend on $v$ and coincides with $\whq$ (in view of its definition given in \eqref{def-hatq}-\eqref{def-hatq2}). This concludes the proof.}
\finsquare

\begin{rem}
{The matrix appearing as coefficient of the quadratic term $w$ in $\whq$ (see \eqref{def-hatq1}) is the symmetric $m\times m$ time dependent matrix  $R(t)$ with entries}
\be
R_{ij} := \int_\Om \left(\kappa b_i b_j - \dot S_{ij}
+ p (b_i B^1_j + b_j B^1_i)  \right) \dd x,\quad \text{for } i,j=1,\dots,m.
\ee
\end{rem}

\subsection{Goh transform of the critical cone}
Here, {we apply the Goh transform to the critical cone and obtain the cone $PC$ in the new variables $(\zeta,w,w(T))$. We then define its closure $PC_2,$ that will be used in the next section to prove second order sufficient conditions. In Proposition \ref{cc-m1-p}, we characterize $PC_2$ in the case of scalar control.}
\subsubsection{Primitives of strict critical directions}
Define the set of primitives of strict critical directions as
\be\label{PC_2}
PC := \left\{
\begin{split}
 &(\zeta,w,w(T))\in  Y\times H^1(0,T)^m\times \RR^m;\\ 
 & (\zeta,w) \text{ is given by \eqref{Goh}  for some } (z,v)\in C_{\rm s}
\end{split}
\right\},
\ee
{which is obtained by applying the Goh transform \eqref{Goh} to $C_{\rm s}$,} and let
\be
PC_2 := 
\text{closure of $PC$ in $Y\times L^2(0,T)^m\times\RR^m$}.
\ee

Remember Definition \ref{def-bk-ck}
of the active constraints sets $\check{B}_k,$ $\hat{B}_k$, $B_k=\check{B}_k \cup \hat{B}_k,$ $C_k$.
\begin{lem}
For any $(\zeta,w,h)\in PC$, it holds
\be
\label{w-B}
w_{B_k} (t) =
\frac{1}{\tau_{k+1}  - \tau_{k}}
 \int_{\tau_{k}}^{\tau_{k+1}} w_{B_k} (s) \dd s,\quad \text{for } k=0,\dots,r-1.
 \ee
\end{lem}

\begin{proof}
Immediate from the constancy of $w_{B_k}$ a.e. on each $(\tau_k,\tau_{k+1}),$ for any $(\zeta,w,h)\in PC.$
\finsquare

Take $(z,v)\in C_{\rm s},$ and let $w$ and $\zeta[w]$ be given by the Goh transform \eqref{Goh}. Let $k\in \{0,\dots,r-1\}$ and take an
index $j \in C_k.$ Then
$\ds 0=\int_\Om c_j(x) z(x,t) \dd x$ on $(\tau_{k} , \tau_{k+1})$.
Therefore,
letting $M_j(t)$ denote the $j$th column of the matrix 
$M(t)$ (defined in \eqref{Mij}), one has
\be
\label{w-C}
M_j(t) \cdot w(t) = - \int_\Om c_j(x)  \zeta[w](x,t) \dd t,\quad \text{on } (\tau_k,\tau_{k+1}),\, \text{for } j\in C_k.
\ee
We can rewrite \eqref{w-B}-\eqref{w-C} 
in the form
\be
\label{Aw=Bw}
\cala^k(t) w(t) = \left( \calb^k  w \right) (t),
\quad
\text{on } (\tau_{k} , \tau_{k+1}),
\ee
where $\cala^k(t)$ is an $m_k\times m$ matrix with $m_k := |B_k| + |C_k|,$ and $\calb^k \colon L^2(0,T)^m \to H^1(\tau_k,\tau_{k+1})^{m_k}.$
We can actually consider 
$\calb:=(\calb^1,\ldots,\calb^{r})$
as a linear continuous mapping from
$L^2(0,T)^m$ to $\ds\Pi_{k=0}^{r-1}  H^1(\tau_k,\tau_{k+1})^{m_k},$
and $\cala:=(\cala^1,\ldots,\cala^{r})$
as a linear continuous mapping from 
$L^2(0,T)^m$ into 
$\ds\Pi_{k=0}^{r-1} L^2(\tau_k,\tau_{k+1})^{m_k}$.
For each $t \in (\tau_k,\tau_{k+1})$, let us use $\cala(t)$ to denote the matrix $\cala^k(t).$ 
We have that, for a.e. $t\in (0,T),$  $\cala(t)$ is  of maximal rank, 
so that there exists a unique measurable
$\lambda(t)$ 
(whose dimension is the rank of $\cala(t)$ and depends on $t$)
such that 
\be
\label{wdecomposition}
w(t) = w_0(t) + \cala(t)^\top\lambda(t),
\quad
\text{with } w_0(t) \in \Ker \cala(t).
\ee
Observe that
 $\cala(t) \cala(t)^\top$ has a continuous time
derivative and is uniformly invertible
on $[0,T].$ So, 
$(\cala(t) \cala(t)^\top)^{-1}$
is linear continuous from $H^1$
into $H^1$ (with appropriate dimensions)
over each arc,
and 
$\cala(t) \cala(t)^\top \lambda (t) = (\calb w)(t)$
a.e.
We deduce that
$t\mapsto (\lambda (t), w_0(t))$
belongs to $H^1$
over each arc $(\tau_k,\tau_{k+1}).$
So, in the subspace $\Ker (\cala-\calb)$,
$w\mapsto\lambda(w)$
is linear continuous, {considering the $L^2(0,T)^m$-topology in the departure set, and the $\ds\Pi_{k=0}^{r-1} H^1(\tau_k,\tau_{k+1})^{m_k}$-topology in the arrival set}.
Since $(\cala-\calb)$ is linear
continuous over $L^2(0,T)^m$
we have that
\be
\label{pc2-ker-a-b}
w\in  \Ker (\cala-\calb),\qquad \text{for all } (\zeta,w,h)\in PC_2.
\ee
While the inclusion {induced by} \eqref{pc2-ker-a-b}
could be strict, we see that for any $(\zeta,w,h)\in PC_2$,
$\lambda(w)$ and $\cala w$ are well-defined in 
the $H^1$ spaces, and the following initial-final conditions hold:
\be
\label{lem-pc2-1}
\left\{ \ba{lll}
{\rm (i)} & \disp
\text{$w_i=0$\, a.e. on $(0,\tau_1),$ for each $i\in B_0,$}
\vspace{1mm} \\ {\rm (ii)} & \disp
  \text{$w_i=h_i$\, a.e. on $(\tau_{r-1},T),$ for each $i\in B_{r-1},$}
\vspace{1mm} \\ {\rm (iii)} & \disp
\text{$g_j'(\yb(\cdot,T))[\zeta(\cdot,T)+B(\cdot,T)\cdot h ]=0$ if $j\in C_{r-1}$.}
\ea\right. 
\ee
However, \eqref{pc2-ker-a-b}
implies additional conditions at the {\em bang-bang} junction points, 
such as: if $\tau\in (0,T)$ is a junction point, and
\be
\text{if $i\in B(t)$ for $t > \tau$ and $t<\tau$
close to $\tau$, then $w_i$ is continuous at time $\tau$. }
\ee

\begin{rem}
Another example is when $m=1,$ the state constraint is active for $t<\tau$ and the control constraint is active for $t>\tau$, then $w$ is continuous at time $\tau$.
This is similar to the ODE case studied in \cite[Remark 5]{MR3555384}.
\end{rem}

We have seen that {over each arc $(\tau_k,\tau_{k+1})$,} 
$\lambda^k:=\lambda(w)$,
is pointwise well-defined, and it possesses right limit at the entry point and left limit at the exit point,
denoted by 
$\lambda(\tau_k^+)$ and $\lambda(\tau_{k+1}^-)$.
Let {$c_{k+1}\in\RR^m$}
be such that, for some
$\nu^{k+i}$, for $i=0,1$,
\be
\label{LemmaJ-1}
{c_{k+1}}  = \cala^{k+i}(\tau)^\top\nu^{k+i}, \;\;\text{for } i=0,1,
\ee 
meaning that {$c_{k+1}$ is a linear combination of the 
rows of both $\cala^k(\tau_{k+1})$ and $\cala^{k+1}(\tau_{k+1})$.}

\begin{lem}
\label{LemmaJ}
{Let $k=0,\dots,r-1,$ and let $c_{k+1}$} satisfy \eqref{LemmaJ-1}.
Then, the junction condition
\be
\label{LemmaJ-2}
{c_{k+1}\cdot \big(w(\tau_{k+1}^+)- w(\tau_{k+1}^-) \big)} = 0,
\ee
holds for all $(\zeta,w,h) \in PC_2$.
\end{lem}

\begin{proof}
Let $(\zeta,w,h)$ in $PC$, {and set $c:=c_{k+1}$ and $\tau:=\tau_{k+1}$ in order to simplify the notation}.
Then
\be
\label{LemmaJ-2-a}
c\cdot w(\tau) 
 = 
(\nu^k)^\top \cala^k(\tau) w(\tau)
= 
(\nu^k)^\top \cala^k(\tau) (\cala^k(\tau))^\top \lambda^k(\tau).
\ee
By the same relations for index $k+1$ we conclude that
\be
\label{nukeq}
(\nu^{k})^\top \cala^{k}(\tau) (\cala^{k}(\tau))^\top \lambda^{k}(\tau)
=
(\nu^{k+1})^\top \cala^{k+1}(\tau) (\cala^{k+1}(\tau))^\top \lambda^{k+1}(\tau).
\ee
Now let $(\zeta,w,h)\in PC_2$. Passing to the limit in the above relation \eqref{nukeq}
written for $(\zeta[w_\ell],w_{\ell},h_{\ell})\in PC$, $w_{\ell}\rar w$ in $L^2(0,T)^m$, {$h_{\ell}\rar h$}
(which is possible since
$\lambda(t)$ is uniformly Lipschitz over each arc),
we get that  \eqref{nukeq} holds for any 
$(\zeta,w,h)\in PC_2$, from which the conclusion follows.
\finsquare

By {\em junction conditions} at the junction time $\tau = \tau_k\in (0,T)$,
we mean any relation of type \eqref{LemmaJ-2}. 
Set 
\be\label{PC-set}
PC'_2 := \{ (\zeta[w],w,h);\, w\in \Ker (\cala-\calb),
\text{\eqref{LemmaJ-2} holds, for all $c$
satisfying \eqref{LemmaJ-1}}\}.
\ee
We have proved that
\be
\label{pc2incl-pcp2}
PC_2 \subseteq PC'_2.
\ee
In the case of a scalar control ($m=1$) we can show that these
two sets coincide. 
\subsubsection{Scalar control case}
The following holds: 

\begin{prop}
\label{cc-m1-p}
If the control is scalar, then
\be
\label{cc-m1-p1}
PC_2 = \left\{ 
\ba{lll}
(\zeta[w],w,h)\in Y\times L^2(0,T)\times \RR; \;\;
w \in \Ker (\cala-\calb); 
\vspace{1mm} \\
\text{$w$ is continuous at BB, BC, CB junctions} 
\vspace{1mm} \\
\text{$\lim_{t\downarrow 0}w(t)=0$ 
if the first arc is not singular}
\vspace{1mm} \\
\text{$\lim_{t\uparrow T}w(t)=h$ 
if the last arc is not singular}
\ea \right\}.
\ee
\end{prop} 
For a proof we refer to \cite[Prop. 4 and Thm. 3]{MR3555384}.

\if{
\subsubsection{Extension: under construction}

Set for $t\in (0,T)$:
\be
X^k_t := (\Ker \cala^k(t) )^\perp = \range( \cala^k(t)^\top).
\ee
For $t$ close enough to 
$(\tau_k, \tau_{k+1})$, this subspace has dimension $m_k$. 
Set, for $t$ close to $\tau= \tau_{k}$:
\be
Y^k_t:=X^k_{t} \cap X^{k-1}_{t}; \;\;
W^k_t:= X^k_{t-1} \cap (Y^k_t)^\perp  \;\;
Z^k_t := X^{k}_{t} \cap (Y^k_t)^\perp;
F^k_t = ( X^{k-1}_{t} + X^{k}_{t} )^\perp.
\ee
We need to assume the nondegeneracy hypothesis
\be
\label{hyp-ND}
\text{$Y^k_t$ has constant dimension for $t$ close to $\tau_{k+1}$.} 
\ee
Assume in the sequel that $t$ close to $\tau_{k+1}$.
Then we have the direct sums 
\be
\label{sum-dir-ywzf}
\RR^m = Y^k_t   \oplus W^k_t  \oplus Z^k_t  \oplus F^k_t.
\ee
These four spaces have constant dimension, and 
the projection of $w_t\in PC_2$ over each of them
is continuous. 

\begin{prop}
Let \eqref{hyp-ND} hold. 
Then $PC_2 = PC'_2(\ub)$.
\end{prop}

\begin{proof}
We need to prove the converse inclusion to 
\eqref{pc2incl-pcp2}. So, let $w \in PC'_2(\ub)$.
We need to construct some $w'\in PC$
such that $\|w'-w\|_2$ is arbitrarily small.

First regularization by convolution,
$\rho_\eps(t) = \eps^{-1}\rho(t/\eps)$ with $rho$
$C^\infty$ with support in unit ball, nonnegative with
unit integral, and 
$w_\eps := w * \rho_\eps$. Then
\be
\| \dot w_\eps \|_\infty 
\leq \|w\|_1 \| \rho_\eps \|_\infty
\leq O( \eps^{-1} ).
\ee

XXX

Given $\eps>0$, let $w^1\in \Uad$ be of class
$C^\infty$ and such that $\|w^1-w\|_2< \eps$.
We next define another control $w^2$ by induction over the arcs.
Let $k\in\{0,\ldots, r-1\}$ be such that $w^2$ and the associated $\zeta^2$
have been defined over $[0,\tau_k)$.
We start with $k=0$. 
Over the arc $(\tau_k, \tau_{k+1})$ we define $w^2(t)$ as the 
closest element to $w^1(t)$ while satisfying that satisfies the 
bound and state constraints in the definition of $PC$:
\be
w^2(t) \in \argmin_{v\in\RR^m}\{ \half |v-w^1(t)|^2; \; 
\cala^k(t) v = \left( \calb^k  w^2 \right) (t)
\}.
\ee
In view of the definition of these linear constraints, we can write
(where $w^b$ corresponds to bound constraints)
\be
w^2(t) = w^b(t) + A(t) (w^2(t),zeta(t))
\ee
This definition makes sense: for each $t\in (\tau_k, \tau_{k+1})$,
$\left( \calb^k  w^2 \right) (t)$ since the values corresponding
to bound values is fixed and the contribution of the state contraint
depends only on  $\zeta^2(t)$ (solution of the equation in $\zeta$
associated with $w^2$).
(which depends only of the past values of $w^2$). 
We easily obtain that for some Lipschitz function $\Xi$:
\be
w^2(t) = \Xi(w^1(t), \zeta^2(t))
\ee
I think that we can deduce that 
\be
\| w^2 - w \|_2 = O(\|w^1-w\|_2) = O(\eps).
\ee
It follows that 
\be
\| \zeta^2 - \zeta \|_2  = O(\eps).
\ee
In addition over each arc $w^2$ is of class $C^\infty$
with uniformly bounded derivatives (of any order).
However $w^2$ has jumps at junction points due to 
the way it is defined. But these jumps are also of order $\eps$. 
By the infinite dimensional 
\cite[Thm. 2.200]{MR1756264} 
version of Hoffman's Lemma \cite{MR0051275}, 
applied to the space $PC$ (endowed with the norm of
$H^1_0(0,T)^m$), there exists 
$w^3 \in PC$ such that 
\be
\| w^3 - w \|_2\leq = \| w^3 - w^2 \|_2  + \| w^2 - w \|_2 
= O(\eps).
\ee
we may compute 
{\bf XXX TO BE REVISED}
\end{proof}
}\fi

\subsection{Necessary conditions after Goh transform}
We define in the usual way (cf. Ambrosio \cite{MR1079985}) the space
$
BV(0,T;L^2(\Om)).
$
{The following second order necessary condition in the new variables follows.}
\begin{theorem}[Second order necessary condition]\label{thm:nec-sec-Goh}
If $(\ub,\yb)$ is an $L^\infty$-local solution of problem \eqref{P}, then
\be
 \max_{ (p,\mu) \in \Lambda_1 } \whq[p,\mu] (\zeta,w,h) \geq 0,\quad \text{on } PC_2.
 \ee
\end{theorem}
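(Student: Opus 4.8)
The plan is to establish the inequality first on the dense subset $PC$ and then pass to $PC_2$ by a compactness argument on the multipliers. On $PC$ nothing new is needed: an element $(\zeta,w,h)\in PC$ is, by definition, the image under the Goh transform \eqref{Goh} of some $(z,v)\in C_{\rm s}$ with $h=w(T)$. Applying Theorem \ref{SONC} to $(z,v)$ gives $(p,\mu)\in\Lambda_1$ with $\calq[p](z[v],v)\ge 0$ (the maximum over the weakly-$*$ compact set $\Lambda_1$ of the functional $\calq$, which is affine and weakly continuous in $p\in L^2(Q)$, being attained), and Lemma \ref{lem:transf-sec-var} turns this into $\whq[p,\mu](\zeta,w,h)=\calq[p,\mu](z[v],v)\ge 0$, hence $\max_{(p,\mu)\in\Lambda_1}\whq[p,\mu](\zeta,w,h)\ge 0$.

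For a general $(\zeta,w,h)\in PC_2$ I would take a sequence $(\zeta_\ell,w_\ell,h_\ell)\in PC$ converging to it in $Y\times L^2(0,T)^m\times\RR^m$, choose for each $\ell$ a multiplier $(p_\ell,\mu_\ell)\in\Lambda_1$ with $\whq[p_\ell,\mu_\ell](\zeta_\ell,w_\ell,h_\ell)\ge 0$ (by the previous paragraph), and let $\ell\to\infty$. The compactness input comes from Lemma \ref{multipliers}(ii) and Theorem \ref{sonc:setting-t1}(ii): $\Lambda_1$ is bounded in $L^\infty(0,T;H^1_0(\Omega))\times BV(0,T)^q_{0,+}$, and — inspecting the proof of Theorem \ref{sonc:setting-t1}(ii), whose estimates (the parabolic $L^q$-regularity for $p^1$ via Lemma \ref{costate-eq-reg.l} and the bound \eqref{psi-abmp} on $\dot\mu$ via Corollary \ref{ref-reg-a} and \eqref{controllability}) only involve the data and that $L^\infty(0,T;H^1_0(\Omega))\times BV(0,T)^q_{0,+}$-bound — the families $\{\mu\}$ and $\{p^1\}$ over $\Lambda_1$ are bounded in $W^{1,\infty}(0,T)^q$ and in $W^{2,1,q}(Q)$ for any finite $q$. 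Extracting subsequences, $\mu_\ell\to\mu$ uniformly with $\dd\mu_\ell\rightharpoonup\dd\mu$ weakly-$*$, $p^1_\ell\to p^1$ in $C(\bar Q)$ (compact Sobolev embedding for $q>n+1$), and therefore $p_\ell=p^1_\ell-\sum_j c_j\mu_{\ell,j}\to p$ in $C(\bar Q)$, using $c_j\in H^2(\Omega)\subset C(\bar\Omega)$ for $n\le 3$; the limit $(p,\mu)$ lies again in $\Lambda_1$, since the costate map and the normal-cone conditions defining $\Lambda_1$ are weakly-$*$ closed and $\beta=1$ is fixed.

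The heart of the argument — and the step I expect to be the main obstacle — is then to show $\whq[p_\ell,\mu_\ell](\zeta_\ell,w_\ell,h_\ell)\to\whq[p,\mu](\zeta,w,h)$, which yields $\whq[p,\mu](\zeta,w,h)\ge 0$ and hence $\max_{(p,\mu)\in\Lambda_1}\whq[p,\mu](\zeta,w,h)\ge 0$, finishing the proof. Inspecting \eqref{def-hatq}--\eqref{def-hatq2}, each coefficient occurring in $\whq$ ($\kappa=1-6\gamma\yb p$ as in \eqref{kappa}, $\dot S$, the functions multiplying the cross terms $w_i\zeta$, and the terminal coefficients built from $p(\cdot,T)$ and $S(T)$) is affine in $(p,\dot\mu,p(\cdot,T))$, while $B^1$ is independent of the multiplier, and the uniform bounds above make all these coefficient families bounded in $L^\infty$. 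Then: the $\zeta^2$ and the purely quadratic $w$ terms pair an $L^\infty$-bounded, (weakly-$*$ or uniformly) convergent coefficient against a strongly convergent quadratic term, using $\zeta_\ell\to\zeta$ in $Y\subset C([0,T];H^1_0(\Omega))$ (so also $\zeta_\ell(\cdot,T)\to\zeta(\cdot,T)$) and $w_\ell\to w$ in $L^2(0,T)^m$ (so $w_\ell\otimes w_\ell\to w\otimes w$ in $L^1(0,T)^{m\times m}$); the cross terms pair the $C(\bar Q)$-convergent $p_\ell$ and the weakly-$*$ convergent $\dd\mu_\ell$ against $\zeta_\ell w_{\ell,i}\to\zeta w_i$ in $L^1(Q)$; and $\widehat q_T$ converges since $p_\ell(\cdot,T)\to p(\cdot,T)$, $\zeta_\ell(\cdot,T)\to\zeta(\cdot,T)$ and $h_\ell\to h$. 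The genuinely delicate point is the uniformity over $\Lambda_1$ of the $W^{1,\infty}$- and $W^{2,1,q}$-bounds: one must verify that no circularity enters, i.e. that the constants in the regularity estimates depend only on the data and on the already-proved $\Lambda_1$-bound, and not on the particular multiplier.
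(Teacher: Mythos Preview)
Your proposal is correct and follows essentially the same route as the paper: approximate $(\zeta,w,h)\in PC_2$ by a sequence in $PC$, apply Theorem~\ref{SONC} together with Lemma~\ref{lem:transf-sec-var} to get $(p_\ell,\mu_\ell)\in\Lambda_1$ with $\whq[p_\ell,\mu_\ell](\zeta_\ell,w_\ell,h_\ell)\ge 0$, extract a convergent subsequence of multipliers, and pass to the limit in $\whq$. The only difference is in the strength of the compactness you invoke: the paper is content with $\dot\mu_\ell$ bounded in $L^\infty(0,T)$ (hence weak-$*$ convergent) and $p_\ell$ weakly convergent in $Y$ with $p_\ell(\cdot,T)\to p(\cdot,T)$ in $L^2(\Omega)$, which already suffices for the strong/weak pairing in each term of $\whq$; you work harder to upgrade this to $p_\ell\to p$ in $C(\bar Q)$ via the $W^{2,1,q}$-bound on $p^1$, which is valid but not needed.
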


\begin{proof} 
    Let $(\zeta,w,h) \in PC_2$.
    Then there exists a sequence $(\zeta_\ell:=\zeta[w_\ell],w_{\ell},w_{\ell}(T))$ in $PC$ with
\be
(\zeta_\ell,w_{\ell},w_{\ell}(T)) \rar (\zeta,w,h),\quad \text{in } Y \times L^2(0,T) \times \RR.
\ee
Let $(z_\ell,v_\ell)$ denote, for each $\ell,$ the corresponding critical direction in $C_{\rm s}.$
By  Lemma \ref{lem:transf-sec-var} and Theorem \ref{SONC},
there exists
$(p_{\ell},\mu_{\ell}) \in \Lambda_1$
such that 
\be
0 \leq \calq [p_{\ell},\mu_{\ell}](z_\ell,v_{\ell}) = \widehat \calq [p_{\ell},\mu_{\ell}](\zeta_\ell,w_{\ell},h_{\ell}).
\ee
We have that $(\dot\mu_\ell)$ is  bounded in $L^\infty(0,T)$
(this is an easy variant of Corollary~\ref{ref-reg-a}).
Extracting if necessary a subsequence, we may assume that
$(\dot\mu_{\ell})$ weak* converges in $L^\infty(0,T)$ to some $\mu$. Consequently, the corresponding solutions $p_\ell$ of \eqref{equationp2} weakly converge to $p$ in $Y$, $p$ being the  costate associated with $\mu$, \if{in $L^2(0,T; H^1_0(\Om))$,}\fi
and $p_\ell(T)$ converges to $p(T)$ in $L^2(\Om)$. 
In view of the definition of $\widehat\calq$
in \eqref{def-hatq}, we get,  by strong/weak convergence,
\be
\lim_{\ell\to \infty} \widehat\calq [p_{\ell},\mu_{\ell}](\zeta_\ell,w_{\ell},h_{\ell})
=
\lim_{\ell\to \infty} \widehat\calq [p_{\ell},\mu_{\ell}](\zeta,w,h)
=
\widehat\calq [p,\mu](\zeta,w,h).
\ee
\finsquare


\section{Second order sufficient conditions}\label{suf-cond-sec} 

In this section we derive second order sufficient optimality
conditions for {\em Pontryagin minima}, a notion that is defined below.

\begin{dfn}
\begin{itemize}
\item[(i)]
An admissible trajectory $(\ub,\yb)$  is  said to be a {\em Pontryagin minimum}  (see e.g. \cite{MR1641590}) for  problem \eqref{P} if, for all $N>0,$ there exists $\eps_N>0$ such that, $(\ub,\yb)$ is optimal among all the admissible trajectories $(u,y)$ verifying
\be\label{PontMin}
\|u-\uh\|_\infty<N\quad \text{and} \quad \|u-\uh\|_1<\eps_N.
\ee
\item[(ii)]
A  sequence $(v_{\ell})\subset L^\infty(0,T)^m$ is said to {\em converge to $0$  in the Pontryagin sense} if it is bounded in $L^\infty(0,T)^m$ and $\norm{v_{\ell}}{1} \rightarrow 0$.
\item[(iii)] We say that $(\ub,\yb)$ is a {\em Pontryagin minimum satisfying 
the weak quadratic growth condition} if 
 there exists $\rho >0$ such that, for every sequence of admissible
 variations $(v_{\ell},\delta y_{\ell})$ having $(v_{\ell})$ convergent to $0$ in the
 Pontryagin sense, one has
\be\label{growth-cond}
F(u_{\ell})- F(\ub) \ge \rho ( \|w_{\ell}\|^2_2 + |w_{\ell}(T)|^2),
\ee
for $\ell$ sufficiently large and where 
$w_{\ell}(t)=\int_0^tv_{\ell}(s)\dd s$. 
\end{itemize}
\end{dfn}

\begin{rem}
In our present setting, since $\Uad$ is a bounded set of $L^\infty(0,T)^m$ (see~\eqref{HypUad}), the first condition of \eqref{PontMin} can be omitted.

Note that \eqref{growth-cond} is a quadratic growth condition
in the $L^2$-norm of the perturbations $(w,w(T))$ obtained after Goh transform.
\end{rem}

The main result of this part is given in Theorem \ref{ThmSC} and gives sufficient conditions for a trajectory to be a Pontryagin minimum with weak quadratic growth.

Throughout the section we assume Hypothesis \ref{hyp-setting}.
In particular, we have by Theorem \ref{sonc:setting-t1} that the state and costate are essentially
bounded.

Consider the condition
\be
\label{jumpcond}
g_j'( \yb(\cdot,T))( \zetab(\cdot,T) + B(\cdot,T)\hb) = 0,\, \text{ if } T \in I^C_j \text{ and } [\mu_j(T )] > 0,\,\, \text{for } j=1,\dots,q.
 \ee
We define
\be
\label{p-cal}
PC_2^*:=
\left\{
\begin{split}
& (\zeta[w],w, h) \in Y\times L^2(0,T)^m \times \RR^m; \;
w_{B_k} \text{ is constant on each arc;} \\
& \eqref{zeta}, \eqref{w-C},\eqref{lem-pc2-1}\text{(i)-(ii)},\eqref{jumpcond}\text{ hold.}
\end{split}
\right\}.
\ee
Note that $PC_2^*$ is a superset of $PC_2$.

\if{
We define 
\be
\begin{aligned}
\calp_2^{(0)} &:= \left\{
\begin{array}{l}
 (w, h, \zeta) \in L^2(0,T)^m \times \RR^m \times Y; \;
 \begin{array}{l}
 \text{$w_{B_k}$ is constant on each arc}
 \end{array}
 \end{array}
 \right\},\\
\calp_2^{(1)} &:= \left\{
 (w, h, \zeta) \in \calp_2^{(0)}
; \;
  \eqref{w-C}, \eqref{zeta},\eqref{lem-pc2-1}\text{(i)-(ii)}\text{ hold}
 \right\},\\
\calp_2^{(2)}&:= \left\{(y, h, \xi) \in  \calp_2^{(1)} ;\; \eqref{lem-pc2-1}\text{(iii)}\text{ holds}\right\},
\end{aligned}
\ee
and set
\be\label{p-cal}
\calp_2:=
\left\{
\begin{array}{ll}
\calp^{(2)}_2, & \text{ if }T \in C\text{ and }[\mu(T )] > 0,\text{ for some }(\beta, p, \dd \mu) \in \Lambda,\\
\calp^{(1)}_2,& \text{otherwise}.
\end{array}
\right.
\ee
}\fi

\begin{dfn} Let $W$ be a Banach space. 
We say that a function $Q \colon W \rar \RR$ is a \emph{Legendre
form} if it is weakly lower semicontinuous, positively homogeneous of degree $2$,
i.e., $Q(tx) = t^2 Q(x)$ for all $x \in  W$ and $t > 0$, and such that if $x_{\ell} \rightharpoonup  x$ and $Q(x_{\ell}) \rar Q(x)$, then $x_{\ell} \rar x$.

\if{We say that a quadratic mapping $Q \colon W \rar \RR$
is a Legendre form if it is 
sequentially weakly lower semi continuous 
and, if
$w_{\ell} \rar w$ weakly in $W$ and $Q(w_{\ell} ) \rar Q(w)$, then $w_{\ell} \rar w$ strongly.}\fi
\end{dfn}

We assume, in the remainder of the article, the following {\em strict complementarity conditions for the control and the state constraints}:
\begin{align}
\label{strict-compl}
\left\{
\begin{array}{ll} 
\text{(i)}& \text{for all } i=1,\dots,m:\\
&\hspace{-5mm} \ds\max_{(p,\mu)\in \Lambda_1} \Psi^p_i(t) > 0
 \text{ in the interior of }
                                                          \check{I}_i,\,
                                                          \text{at }
                                                          t=0 \text{
                                                          if }
                                                          0\in\check{I}_i,
                                                          \text{ at }
                                                          t=T \text{
                                                          if }
                                                          T\in\hat{I}_i,
  \\
   & \hspace{-5mm} \ds
      \min_{(p,\mu)\in \Lambda_1} \Psi^p_i(t) < 0
       \text{ in the interior of } \hat{I}_i,\, \text{at } t=0 \text{
       if } 0\in \hat{I}_i, \text{ at } t=T \text{ if }
       T\in\hat{I}_i,
  \\
\text{(ii)} & {\text{There exists $(p,\mu)\in \Lambda_1$ such that
              $\supp \dd \mu_j = I^C_j$, for all $j=1,\dots,q$.}
              }
\end{array}
\right.
\end{align}


\begin{theorem}\label{ThmSC}
The following statements hold.
\begin{itemize}
\item[a)]
Assume that 
  \begin{itemize}
  \item[(i)] $(\ub,\yb)$ {is a feasible trajectory with nonempty associated set of multipliers $\Lambda_1$;}
  \item[(ii)] {for each $(p,\mu) \in \Lambda_1,$
      $\whq[p,\mu](\cdot)$} is a Legendre form on the space
    \\
    $\{(\zeta[w],w, h) \in Y\times L^2(0,T)^m \times \RR^m\}$; and 
  \item[(iii)]  the {\em uniform positivity} holds, i.e. there exists $\rho>0$ such that
  \be\label{Q-coerc_cond}
\max_{ (p,\mu) \in \Lambda_1 } \whq[p,\mu](\zeta[w],w,h) \ge \rho ( \|w\|^2_2 + |h|^2), 
\;\; \text{for all $(w,h) \in  PC_2^*$.}
  \ee
 \end{itemize}
 Then $(\ub,\yb)$ is a Pontryagin minimum satisfying 
the weak quadratic growth condition.
\item[b)]
Conversely, for an admissible trajectory $(\ub,y[\ub])$ satisfying the growth condition \eqref{growth-cond}, it holds
 \be\label{Q-coerc_cond2}
 \max_{ (p,\mu) \in \Lambda_1 } \whq[p,\mu](\zeta[w],w,h) \ge \rho ( \|w\|^2_2 + |h|^2), 
\quad \text{for all $(w,h) \in  PC_2$.} 
  \ee
\end{itemize}
 \end{theorem}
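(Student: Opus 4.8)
The plan is to split the statement into its two halves and treat them separately, since part (b) is the comparatively soft direction (necessity of coercivity from quadratic growth) while part (a) — sufficiency — is where the real work lies.

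\emph{Part (b).} Suppose $(\ub,\yb)$ satisfies the weak quadratic growth condition \eqref{growth-cond} but, for contradiction, \eqref{Q-coerc_cond2} fails on $PC_2$. Since $\whq[p,\mu]$ is positively homogeneous of degree $2$ in $(\zeta,w,h)$ (and $\zeta=\zeta[w]$ is linear in $w$), failure of \eqref{Q-coerc_cond2} for any $\rho>0$ means there is $(\zeta[w],w,h)\in PC_2$ with $\|w\|_2^2+|h|^2=1$ and $\max_{(p,\mu)\in\Lambda_1}\whq[p,\mu](\zeta[w],w,h)<0$. By the density of $PC$ in $PC_2$ (definition of $PC_2$) and continuity of $\whq$ in the $Y\times L^2\times\RR$-topology, we may assume $(\zeta[w],w,h)\in PC$, i.e. $w$ comes from an honest critical direction $(z,v)\in C_{\rm s}$ via the Goh transform \eqref{Goh}, and by Corollary \ref{qrd-dense-coro} we may further take $(z,v)$ quasi-radial. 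Now use the expansion of Proposition \ref{prop:expansion}: along the admissible path $u_\tau:=\ub+\tau v$ (feasible by quasi-radiality, for $\tau$ small), using that $\Psi^p\cdot v\le 0$ along a critical direction so the first-order term is $\le o(\tau^2)$ after incorporating the measure terms, one gets $F(\ub+\tau v)-F(\ub)\le \tfrac{\tau^2}{2}\whq[p,\mu](\zeta[w],w,h)+o(\tau^2)$ for the relevant multiplier, hence $<0$ for small $\tau$ along a sequence $v_\ell\to 0$ in the Pontryagin sense, contradicting \eqref{growth-cond}. The bookkeeping to convert $\calq[p](z,v)$ via Lemma \ref{lem:transf-sec-var} into $\whq$ and to absorb the cubic remainder $\gamma\int_Q p(\delta y)^3$ (which is $o(\tau^2)$ because $\delta y=O(\tau)$ uniformly by Theorem \ref{sonc:setting-t1}(i)) is routine.

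\emph{Part (a).} This is the main obstacle. The argument is by contradiction: assume $(\ub,\yb)$ is \emph{not} a Pontryagin minimum with weak quadratic growth, so there is a sequence of admissible $(u_\ell,y_\ell)$ with $v_\ell=u_\ell-\ub\to 0$ in the Pontryagin sense and $F(u_\ell)-F(\ub)<\rho(\|w_\ell\|_2^2+|w_\ell(T)|^2)$ for all $\rho>0$ along a subsequence, equivalently $F(u_\ell)-F(\ub)=o(\|w_\ell\|_2^2+|w_\ell(T)|^2)$ (or negative). Set $t_\ell:=(\|w_\ell\|_2^2+|w_\ell(T)|^2)^{1/2}$ and rescale $\bar w_\ell:=w_\ell/t_\ell$, $\bar\zeta_\ell:=\zeta[w_\ell]/t_\ell$; extract a weak limit $(\bar\zeta,\bar w,\bar h)$ in $Y\times L^2(0,T)^m\times\RR^m$. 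One must show: (1) the limit lies in $PC_2^*$ — this uses admissibility of $u_\ell$ together with the hypotheses of Hypothesis \ref{hyp-setting} (the $g_j\le -c\,\dist(t,I^C_j)$ estimate forces the linearized state-constraint inequalities to become the equalities defining \eqref{w-C}; the uniform distance to control bounds \eqref{hyp-geom} forces $w_{B_k}$ constant on arcs; the junction behaviour of $\mu$ from Lemma \ref{costate-eq-reg.l-cor} gives \eqref{lem-pc2-1} and \eqref{jumpcond}); (2) a second-order Taylor expansion of $F$ and of the constraints along $u_\ell$, using Proposition \ref{prop:expansion} together with Lemma \ref{lem:transf-sec-var} and the essential boundedness of $(p,\mu)$ from Theorem \ref{sonc:setting-t1}(ii), yields
$$
0 \ge \liminf_\ell \frac{F(u_\ell)-F(\ub)}{t_\ell^2}\ \ge\ \tfrac12\,\whq[p,\mu](\bar\zeta,\bar w,\bar h) + (\text{terms from the measure $\mu$ that are}\ge 0),
$$
for every $(p,\mu)\in\Lambda_1$, where the sign of the $\mu$-terms comes from complementarity $\int_0^T g_j(\yb)\dd\mu_j=0$ combined with feasibility $g_j(y_\ell)\le 0$; taking the max over $\Lambda_1$ gives $\max_{(p,\mu)}\whq[p,\mu](\bar\zeta,\bar w,\bar h)\le 0$. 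Combined with $(\bar\zeta,\bar w,\bar h)\in PC_2^*$ and \eqref{Q-coerc_cond}, this forces $\bar w=0$, $\bar h=0$. Finally, to derive a contradiction with $\|\bar w_\ell\|$-normalization, use that $\whq[p,\mu]$ is a Legendre form (hypothesis (ii)): weak convergence $\bar w_\ell\rightharpoonup 0$ plus $\whq[p,\mu](\bar\zeta_\ell,\bar w_\ell,\bar h_\ell)\to 0=\whq[p,\mu](0,0,0)$ upgrades to strong convergence $\bar w_\ell\to 0$ in $L^2$ and $\bar h_\ell\to 0$, contradicting $\|\bar w_\ell\|_2^2+|\bar h_\ell|^2=1$.

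\emph{Where the difficulty concentrates.} The delicate points are: (i) justifying that the first-order terms $\int_0^T\Psi^p\cdot v_\ell\,\dd t$ plus the measure contributions in $\call$ are $o(t_\ell^2)$ rather than merely $O(t_\ell)$ — this requires carefully exploiting complementarity and the sign conditions \eqref{coco-coco}, and passing the Goh transform through $\int\Psi^p\cdot v_\ell$ (an integration by parts in which the boundedness $\mu\in W^{1,\infty}$ and the regularity of $\Psi^p$ from Proposition \ref{prop-state-const} are essential); (ii) proving the limit direction actually lands in $PC_2^*$, particularly the junction conditions, which is why $PC_2^*$ is defined as a \emph{superset} of $PC_2$ — one does not need to recover membership in $PC_2$ itself, only in the larger cone on which \eqref{Q-coerc_cond} is assumed; and (iii) the Legendre-form step, which is the standard but indispensable device (as in Bonnans–Shapiro \cite{MR1756264}) for converting a weak-limit second-order inequality into strong convergence and hence a contradiction. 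The cubic remainder and the quadratic-in-$\delta y$ terms are uniformly $o(t_\ell^2)$ thanks to the uniform $C(\bar Q)$ bound on $\delta y_\ell=y_\ell-\yb$ from Theorem \ref{sonc:setting-t1}(i), so they do not interfere.
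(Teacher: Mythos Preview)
Your outline for part (a) follows the paper's strategy (contradiction, normalization, weak limit, show the limit lies in $PC_2^*$, then use the Legendre-form hypothesis to upgrade weak to strong convergence). However, you misidentify the hypotheses that drive the crucial step ``limit direction lies in $PC_2^*$''. Neither \eqref{hyp-g-sharp} nor \eqref{hyp-geom} does this work. The paper relies on the \emph{strict complementarity} assumption \eqref{strict-compl}, which you do not mention at all: for the constancy of $\bar w_i$ on each arc of $I_i$ one needs $\Psi^p_i>0$ (resp.\ $<0$) in the interior of $\check I_i$ (resp.\ $\hat I_i$) for some multiplier, so that from $\int_0^T\Psi^p_i v_{\ell,i}\,\dd t=O(\Upsilon_\ell^2)$ one can localize and conclude $\dd\bar w_i=0$ there; for the equality in \eqref{w-C} one first gets ``$\le 0$'' from admissibility, and the reverse inequality comes from integrating against $\dd\mu_j$ and using $\supp\dd\mu_j=I^C_j$ (i.e.\ \eqref{strict-compl}(ii)). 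Hypothesis \eqref{hyp-geom} concerns only times where the bound is \emph{inactive}, and \eqref{hyp-g-sharp} concerns $g_j$ \emph{off} the contact set, so neither can force these equalities. Also, the expansion you need is not Proposition \ref{prop:expansion} plus Lemma \ref{lem:transf-sec-var} alone: those give $\calq[p](\delta y,v)$ and $\calq[p](z,v)=\whq[p,\mu](\zeta,w,h)$ separately, and the nontrivial estimate $\calq[p](\delta y,v)-\calq[p](z,v)=o(\Upsilon_\ell^2)$ is the content of the refined expansion in Proposition \ref{suf-cond.p}. Finally, the first-order term $\int_0^T\Psi^p\cdot v_\ell$ is \emph{not} shown to be $o(\Upsilon_\ell^2)$; it is nonnegative and simply dropped, which is enough for the inequality $\whq\le o(\Upsilon_\ell^2)$.

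For part (b) your approach is different from the paper's and contains a gap. You assert that failure of \eqref{Q-coerc_cond2} yields a direction with $\max_{(p,\mu)}\whq<0$; but \eqref{Q-coerc_cond2} is claimed for the \emph{same} $\rho$ as in \eqref{growth-cond}, so failure only gives a direction with $\max_{(p,\mu)}\whq<\rho(\|w\|_2^2+|h|^2)$, not $<0$. Moreover, quasi-radiality only controls the \emph{linearized} state constraint $g_j(\yb)+\tau g_j'(\yb)z$, not $g_j(y[\ub+\tau v])$, so admissibility of $u_\tau$ requires a Robinson-type correction you do not discuss. The paper avoids both issues by a cleaner device: it observes that $(\ub,\yb,\bar w)$ is a Pontryagin minimum of the auxiliary problem obtained by subtracting $\rho(\|w-\bar w\|_2^2+|w(T)-\bar w(T)|^2)$ from the cost (with the extra state $\dot w=u$), applies the second-order necessary condition (Theorem \ref{SONC}) to that problem, and Goh-transforms; the $\rho$-term then appears directly in the transformed quadratic form and yields \eqref{Q-coerc_cond2} on $PC_2$ with the correct constant.
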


The remainder of this section is devoted to the proof of Theorem \ref{ThmSC}. We first state some technical results.

\subsection{A refined expansion of the Lagrangian}\label{suf-cond-sec-ref} 

{Let $(\ub,\yb)$ be an admissible trajectory.} We start with a refinement of the expansion of the Lagrangian of Proposition \ref{prop:expansion}.

\begin{lem}
\label{suf-cond.p-7-lem} 
Let $(u,y)$ be a trajectory,
$(\delta y,v):=(u-\ub,y-\yb)$,
$z$ be the solution of the linearized state equation 
\eqref{lineq2}, 
$(w,\zeta)$ given by the Goh transform \eqref{Goh} 
and $\eta:=\delta y -z$.
Then
\be
\label{suf-cond.p-7-lem-eq} 
\begin{split}
{\rm (i)} \,\,& \disp \| z \|_{{L^2(Q)}} + \|z(\cdot,T)\|_{L^2(\Omega)} = O( \|w\|_2 + |w(T)|),\\
{\rm (ii.a)} \,\,& \disp \|\delta y \|_{L^2(Q)} + \|\delta y(\cdot,T) \|_{L^2(\Omega)}  = O( \|w\|_2 + |w(T)|),\\
{\rm (ii.b)} \,\, & \|\delta y\|_{L^\infty(0,T;H_0^1(\Omega))} = O(\|w\|_\infty),\\  
{\rm (iii)}\,\, & \disp \|\eta \|_{L^\infty(0,T;L^2(\Omega))} + \| \eta(\cdot,T)\|_{L^2(\Omega)} = o ( \|w\|_2 + |w(T)| ).
\end{split}
\ee
\end{lem}

Before doing the proof of Lemma \ref{suf-cond.p-7-lem}, let us recall the following property:
\begin{prop}
\label{PropMild}
The equation
\be
\label{eqPsi}
\dot\Phi-\Delta\Phi+a\Phi = \hat f,\quad \Phi(x,0)=0,
\ee
with 
$a\in L^\infty(Q)$, $\hat f\in L^1(0,T;L^2(\Omega)),$
and homogeneous Dirichlet conditions on $\partial\Om\times (0,T)$,
has a unique solution $\Phi$ in $C([0,T];L^2(\Omega))$,
that satisfies
\be
\| \Phi \|_{C([0,T];L^2(\Omega))} \leq c\|\hat f\|_{L^1(0,T;L^2(\Omega))}.
\ee
\end{prop}
\begin{proof}
{This follows from the estimate for mild solutions in the
  semigroup theory, see e.g. \cite[Theorem 2]{MR3767765PlusErratum}.}
\finsquare


{\em Proof of Lemma \ref{suf-cond.p-7-lem}.}
(i) Since $\zeta$ is solution of \eqref{zeta}, it satisfies \eqref{eqPsi} with
\be
\label{eqahatf}
a:=-3\gamma\yb^2+\sum_{i=0}^m \ub_ib_i,\quad
\hat f:=\sum_{i=1}^m w_i B^1_i,
\ee
where $B^1_i$ is given in \eqref{B1}.
One can see, in view of Hypothesis \ref{hyp-setting}, that $\hat f \in L^1(0,T;L^2(\Omega))$ since the terms in brackets in \eqref{eqahatf} belong to $L^\infty(0,T;L^2(\Omega)).$
Thus, from Proposition~\ref{PropMild} we get that $\zeta \in C([0,T];L^2(\Omega))$ and
\be
\label{suf-cond.p-6-lem-1}
\|\zeta\|_{L^\infty(0,T;L^2(\Omega))} = O(\|\hat f\|_{L^1(0,T;L^2(\Omega))}) = O( \|w\|_{1} ).
\ee
Thus, due to Goh transform \eqref{Goh} and Corollary \ref{yCH10}, we get that $z$ belongs to $C([0,T];L^2(\Omega))$ and we obtain the estimate {\rm (i).}

We next prove the estimate (ii) for $\delta y.$ Set $\zeta_{\delta y} :=\delta y - (w\cdot b)\yb.$ Then
\be
\label{dotzetadeltay}
\dot{\zeta}_{\delta y} - \Delta \zeta_{\delta y} + a_{\delta y}\zeta_{\delta y} = \hat{f}_{\delta y},
\ee
with
\be
\begin{aligned}
a_{\delta y}&:= 3\gamma\yb^2+3\gamma\yb\zeta_{\delta y}+\gamma(\zeta_{\delta y})^2-(\ub\cdot b),\\
\hat f _{\delta y}&:=  \sum_{i=1}^m w_i \left[ \yb \Delta b_i + \nabla b_i \cdot \nabla \yb - b_i(2\gamma\yb^3+f)\right].
\end{aligned}
\ee
By Theorem \ref{sonc:setting-t1}, $\zeta_{\delta y}$ is in $L^\infty(Q)$, hence $a_{\delta y}$ is essentially bounded.
Furthermore, in view of the regularity  Hypothesis \ref{hyp-setting} and Corollary \ref{yCH10}, we have $\hat f_{\delta y} \in L^1(0,T;L^2(\Omega))$.
We then get, using Proposition \ref{PropMild}, 
\be
\label{zetadeltayest}
\|\zeta_{\delta y}\|_{L^\infty(0,T;H_0^1(\Omega))} \leq O(\|w\|_1).
\ee
From the latter equation and the definition of $\zeta_{\delta y}$ we deduce {\rm (ii.a)}.
{Since 
\be
\nabla(\delta y) = \nabla (\zeta_{\delta y}) + \sum_{i=1}^m w_i (\yb\nabla b_i + b_i\nabla\yb),
\ee
applying the $L^\infty(0,T;L^2(\Omega))$-norm to both sides, and using \eqref{zetadeltayest} and Corollary \ref{yCH10} we get {\rm (ii.b).}}

The estimate in {\rm (iii)} follows from the following consideration.
To apply Proposition \ref{PropMild} to equation \eqref{d_1-equ} we easily verify that $r$ is in $L^{\infty}(Q)$ 
and $\tilde r$ in $L^1(0,T;L^2(\Om))$. Consequently, we have
\begin{equation}
 \begin{aligned}
 \norm{\eta}{C([0,T];L^2(\Om))} &  \le c \norm{\sum_{i=1}^m v_i b_i \delta y -3\gamma 
 \yb(\delta y)^2- \gamma (\delta y)^3}{L^1(0,T;L^2(\Om))}  \\
 & \le \norm{v}{2} \norm{b}{\infty} \norm{\delta y}{L^2(0,T;L^2(\Om))} 
 +3\gamma 
 \norm{\yb}{\infty}\norm{(\delta y)^2}{L^1(0,T;L^2(\Om))}  \\
 &\,\,\,
 +\gamma \norm{(\delta y)^3}{L^1(0,T;L^2(\Om))}
\end{aligned}
\end{equation}
Now, since  $\norm{v}{2} \rar 0$ and
$\|\delta y\|_{\infty}\to 0$ (by similar arguments to those of the proof of (i) in Theorem~\ref{sonc:setting-t1}), we get {\rm (iii).}

\findem

\begin{prop}
\label{suf-cond.p}
{Let $(p,\mu) \in \Lambda_1.$}
Let $(u_\ell)\subset \Uad$ and let us write $y_\ell$ for the corresponding states. Set $v_\ell:=u_\ell-\ub$ and assume that $v_{\ell} \to 0$ a.e.  
Then, 
\begin{multline}
\label{suf-cond.p-1}
\call[p,\mu](\ub+v_{\ell},y_\ell) = \call[p,\mu](\ub,\yb) \\+ \int_0^T \Psi^p(t)\cdot v_{\ell}(t) \dd t
+ \half
\whq[p,\mu](\zeta_{\ell},w_{\ell},w_{\ell}(T))+ o( \|w_{\ell}\|_2^2 + |w_{\ell}(T)|^2),
\end{multline}
where $w_{\ell}$ and $\zeta_{\ell}$ are given by the Goh transform \eqref{Goh}.
\end{prop}

\begin{proof} 
Since $(v_{\ell})$ is bounded in $L^{\infty}(0,T)^m$ and 
converges a.e. to 0, 
it converges to zero in any $L^p(0,T)^m$.
For simplicity of notation we omit the index $\ell$ for the remainder of the proof.
Set $\delta y:= y[\ub+v]-\yb.$ By Proposition \ref{prop:expansion}
it is enough to prove that 
\begin{gather}
\label{suf-cond.p-2-1}
 \left|\calq[p,\mu](\delta y,v) - \whq[p,\mu](w,w(T),\zeta) 
\right| =   o( \|w\|^2_2 + |w(T)|^2),\\
\label{suf-cond.p-2-2}
\left| \int_Q p (\delta y)^3 \right| = o( \|w\|^2_2 + |w(T)|^2).
\end{gather}
We have, setting as before $\eta:= \delta y -z$ where $z:=z[v],$
\be
\label{suf-cond.p-3}
\begin{aligned}
\calq[p,\mu](\delta y,v) - \whq[p,\mu](&\zeta,w,w(T))   =  
\calq[p,\mu](\delta y,v) - \calq[p,\mu](z,v)
\vspace{1mm} \\ &
= 
2 \int_Q  (v\cdot b) p \eta \dd x\dd t +
\int_Q\kappa (\delta y + z) \eta\dd
x \dd t
\vspace{1mm} \\ &
\quad +\int_{\Om} (\delta y(x,T) 
+ z(x,T))\eta(x,T)   \dd x,
\end{aligned}
\ee
and therefore, since 
the state and costate are essentially bounded:
\be
\label{suf-cond.p-4}
\begin{aligned}
\big| \calq[p,\mu](\delta y,v) - &\whq[p,\mu](\zeta,w,w(T))\big | \\
& \leq 2
\left| \int_Q  (v\cdot b)p \eta \dd x\dd t \right| +
O( \|\delta y + z \|_2\|\eta\|_2)\\
& \quad +
O( \|(\delta y + z)(\cdot,T)\|_{L^2(\Om)} \|\eta(\cdot,T)\|_{L^2(\Om)}).
\end{aligned}
\ee
In view of Lemma \ref{suf-cond.p-7-lem}, the
  `big O' terms in the r.h.s. are of the desired order and it remains
  to
  deal with the integral term.
We have, integrating by parts in time,
\be\label{DeltaQeq}
\int_Q (v\cdot b) p \eta \dd x\dd t
=  \int_\Om \big(w(T)\cdot b(x)\big) p(x,T) \eta(x,T) \dd x
-
\int_Q (w\cdot b) \ddt (p \eta) \dd x\dd t. 
\ee
For the first term in the r.h.s. of \eqref{DeltaQeq} we get, in view of 
\eqref{suf-cond.p-7-lem-eq}(ii), 
\begin{multline}
\left|
\int_\Om (w(T)\cdot b(x)) p(x,T) \eta(x,T) \dd x
\right| \\
=
O( | w(T)| \|\eta(\cdot,T)\|_{L^2(\Omega)}) = o( \|w\|^2_2 + | w(T)|^2).
\end{multline}
And, for the second term in the r.h.s. of
\eqref{DeltaQeq}, since $b$
is essentially bounded,
\if{
\be\label{DeltaQest}
\left|
\int_Q (w\cdot b) \ddt (p \eta) \dd x\dd t
\right| 
=
O\left( \|w\|_{2} \left\| \ddt (p \eta)  \right\|_{L^2(Q)}
\right).
\ee
} \fi 
and $p$ and $\eta$ satisfy 
\eqref{equationp2} and \eqref{d_1-equ},
respectively, we have that, 
\be
\label{intdtpeta}
\ddt (p \eta)  =
\varphi_0 + \varphi_1 + \varphi_2,
\ee
where
\be
\varphi_0 := p \Delta \eta- \eta \Delta p; \;
\varphi_1 :=  ( v\cdot b )  p \delta y; \;
\varphi_2 := p e (\delta y)^2 -\eta \left( 
y - y_d + \sum_{j=1}^q c_j \dot \mu_j(t)
\right).
\ee
{\em Contribution of $\varphi_2.$}
Since $y$, $p$ and $\dot \mu$ are essentially bounded (see Theorem \ref{sonc:setting-t1}), we get
 \be
\left|
\int_Q (w\cdot b) \varphi_2 \right| = O\left(\|w(\delta y)^2+w\eta\|_2\right) = o(\|w\|^2_2+|w(T)|^2),
\ee
where the last equality follows from the estimates for $\delta y$ and $\eta$ obtained in Lemma~\ref{suf-cond.p-7-lem}.\\
{\em Contribution of $\varphi_1.$}
Integrating by parts in time,
we can write the contribution of $\varphi_1$ as 
\be
\label{varphi1cont}
\half \int_Q \ddt (w\cdot b)^2 p \delta y
=
\half \int_\Om (w(T)\cdot b)^2 p(x,T) \delta y(x,T) -
\half \int_Q (w\cdot b)^2 \ddt  (p \delta y)
\ee
The contribution of the term at $t=T$
is of the desired order.
Let us proceed with the estimate for the last term in the r.h.s. of \eqref{varphi1cont}. We have
\begin{equation}
\begin{aligned}
\label{dotpdeltay}
 \ddt (p\delta y) &= (-\delta y\Delta p+p\Delta\delta y) \\
 &+\left(- (\yb-y_d)-\sum_{j=1}^q c_j\dot{\mu}_j\right) \delta y  
 +  \left( \sum_{i=1}^m v_i b_i y -3\gamma
\yb(\delta y)^2 -\gamma (\delta y)^3\right) p.
\end{aligned}
\end{equation}
For the contribution of first term in the r.h.s. of latter equation we get
\be
\int_Q (w\cdot b)^2(-\delta y\Delta p+p\Delta\delta y) = \sum_{i,j=1}^m \int_0^T w_iw_j \int_\Omega \nabla (b_ib_j) \cdot(\delta y \nabla p -p\nabla\delta y).
\ee
Using Lemma \ref{lem-uby}, 
since $\nabla (b_ib_j)$ is essentially bounded for every pair $i,j$,
it is enough to prove that 
\be
\int_\Omega \nabla (b_ib_j) \cdot(\delta y \nabla p -p\nabla\delta y) \to 0
\ee
uniformly in time.
For this, in view of the estimate for $\|\delta y\|_{L^\infty(0,T;H^1_0(\Om))}$ obtained in Lemma \ref{suf-cond.p-7-lem} item {\rm (ii.b)}, and since $p$ is essentially bounded, it suffices to prove that $p$ is in $L^\infty(0,T;H^1(\Omega))$ which follows from Corollary \ref{cor:reg-p}.

Let us continue with the expression in \eqref{dotpdeltay}. The terms containing $\delta y$ go to 0 in $L^\infty(0,T;L^2(\Omega))$ and that is sufficient for our purpose. The only term that has to be estimated is
\begin{multline}
\label{wb3}
\int_Q (w\cdot b)^2 (v\cdot b) yp = \frac13 \int_Q \frac{\dd}{\dd t} (w\cdot b)^3 yp \\= \frac13\int_\Omega (w(T)\cdot b)^3y(\cdot,T)p(\cdot,T)- \frac13\int_Q (w\cdot b)^3 \frac{\dd}{\dd t}(yp).
\end{multline}
We consider the pair of state and costate equations with $g:=y-y_d$ given as
\begin{equation}
 \begin{aligned}
\dot y - \Delta y +\gamma y^3  &=  (u\cdot b)y  + f;& y(0)&=y_0;\\
- \dot p -\Delta  p + \gamma y^2 p&=  (u\cdot b)p  + g+ c \dot \mu;& p(T)&=0.
 \end{aligned}
 \end{equation}
and so for sufficiently smooth $\varphi\colon \Om  \times (0,T)\rar \RR$ we have
 \begin{equation}
 \begin{aligned}
  \int_Q\varphi &  \frac{\dd }{\dd t} (yp)   = \int_Q \varphi (\dot y p + y \dot p)   \\
  &= \int_Q \varphi \left[ (\Delta y -\gamma y^3 +  (u\cdot b)y  + f)p + y  (-\Delta p + \gamma y^2 p  -  (u\cdot b)p  - g- c \dot \mu) \right]  \\
  & =\int_Q \varphi  \left[ f p -  y  g +c \dot \mu { y} \right]+
  {\nabla \varphi \cdot (-p\nabla y + y\nabla p ), }
 \end{aligned}
 \end{equation}
 and, consequently, 
 we have for $\varphi=(w \cdot b)^3,$ 
 \begin{equation}
 \begin{aligned}
  \int_Q (w \cdot b)^3 \frac{\dd }{\dd t} (yp) 
  & =\int_Q  (w \cdot b)^3 \left[ f p - y  g +c \dot \mu { y} \right] 
  + {\nabla  (w \cdot b)^3 \cdot (-p\nabla y + y\nabla p )}. 
 \end{aligned}
 \end{equation}
 By Hypothesis \ref{hyp-setting}, $f$ and $b$ are sufficiently smooth, $\dot \mu$ is essentially bounded,
 $y, p \in L^{\infty}(0,T;H^1_0(\Om))$. 
 We estimate
 \begin{equation*}
\begin{aligned}
&\left| \int_Q (w\cdot b)^3 \frac{\dd}{\dd t}(yp) \right|  \leq
\|b\|^3_{{\infty}} \|w\|_\infty \|w\|_2^2 
\left\|f p -  y  g +c \dot \mu {y}\right\|_{L^\infty(0,T;L^1(\Om))}\\
&+  O(\|b\|_\infty^2\|\nabla b\|_\infty)  \|w\|_\infty \|w\|_2^2 
\left(\left\|y\right\|_{L^\infty(0,T;H^1_0(\Om))} \left\|p\right\|_{L^\infty(0,T;H^1_0(\Om))}\right)=o(\norm{w}{}^2).
\end{aligned}
\end{equation*}
\if{So we need
\be
\left\|\dot{y}\right\|_{L^\infty(0,T;L^2(\Omega))} \leq M,\quad
\left\|\dot{p}\right\|_{L^\infty(0,T;L^2(\Omega))}\leq M.
\ee
}\fi
\\
{\em Contribution of $\varphi_0.$}
  Integrating by parts, we have that
\be
\label{varphi0}
\begin{split}
\int_0^T w_i \int_\Om b_i  \varphi_0 &=
\int_0^T w_i\int_\Om b_i  (p \Delta \eta - \eta \Delta p) 
=
\int_0^T w_i\int_\Om \nabla b_i \cdot (p \nabla \eta - \eta \nabla p)\\
&{ =\int_0^T w_i\int_\Om \Big(-p\eta\Delta b_i -2 \eta \nabla p \cdot \nabla b_i).}
\end{split}
\ee
Recalling that $b\in W^{2,\infty}_0(\Om)$ (see \eqref{sonc:setting-t1-1}) and that $p$ is essentially bounded (due to Theorem~\ref{sonc:setting-t1}), we get for the first term  in the r.h.s. of the latter display, 
\be
\left| \int_0^T w_i\int_\Om p\eta\Delta b_i \right| \leq \|\Delta b_i\|_\infty\|w_i\|_2 \|p\|_\infty \|\eta\|_{L^2(0,T;L^2(\Om))},
\ee
that is a small-$o$ of $\|w\|_2^2$ in view of item {\rm (iii.a)} of Lemma \ref{suf-cond.p-7-lem}.
For the second term in the r.h.s. of \eqref{varphi0} we get
\be
\left| \int_0^T w_i\int_\Om \eta \nabla p \cdot \nabla b_i \right| \leq
\|\nabla b_i\|_{\infty} \|w_i\|_2 \|\eta\|_{L^2(0,T;L^2(\Om))} \|\nabla p\|_{L^\infty(0,T;L^2(\Om)^n)}
\ee
Since $p\in L^\infty(0,T;H^1(\Omega))$ as showed some lines above and in view of item {\rm (iii.a)} of Lemma \ref{suf-cond.p-7-lem}, we get that the r.h.s. of latter equation is a small-$o$ of $\|w\|_2^2,$ as desired.

Collecting the previous estimates, we get 
\eqref{suf-cond.p-2-1}.
Similarly, since 
$\delta y \rar 0$ uniformly and
 the costate $p$ is
essentially bounded, with 
\eqref{suf-cond.p-7-lem-eq}(i) we get
\be
\label{suf-cond.p-8}
\left| \int_Q  pb (\delta y)^3 \dd x\dd t \right| 
= 
o \left( \|\delta y\|^2_2 \right) =
o \left(  \|w\|_2 ^2+ |w(T)|^2 \right).
\ee
The result follows.
\finsquare

\begin{cor}\label{expansion:J}
Let $u=\ub+v$ be an admissible control. 
Then, setting
$w(t):=\int_0^t v(s)\dd s$,
we have the reduced cost expansion
\be
F(u)=F(\ub) +  D F(\ub)v +
O(\|w\|^2_2 + |w(T)|^2).
 \ee
\end{cor}

\begin{prop}\label{prop-1}
 Let $(p,\dd \mu) \in \Lambda_1,$ and let $(z,v) \in Y \times L^2(0,T)^m$ satisfy the linearized state equation \eqref{lineq2}. Then,
\be
\begin{split}
  \int_0^T \Psi^p(t) \cdot v(t) \dd t  =   DJ(\ub,\yb)(z,v)+
 \sum_{j=1}^q \int_0^T g_j'(\yb(\cdot,t)z(\cdot,t)  {\rm d} \mu_j(t),
\end{split} 
\ee
where 
$$
DJ(\ub,\yb)(z,v)=\sum_{i=1}^m\int_0^T  \alpha_i v_i \dd t+ \int_Q(\yb-y_d)z\dd x\dd t + \int_\Omega (\yb(T)-y_{dT}) z(T)\dd x,$$ 
and it coincides with $DF(\ub)v.$
\end{prop}
\begin{proof}
It follows from \eqref{lineq2}, \eqref{costat-eq} and \eqref{def-psi}.
 \if{From   we obtain
 \begin{multline*}
\int_Q p (v\cdot b \yb) {\rm d}x {\rm d}t  \\= 
 \sum_{j=1}^q \int_0^T g_j'(\yb)z  {\rm d} \mu_j(t)  
  + \int_Q  (\yb-y_d) z {\rm d}x {\rm d}t
 +  \int_\Omega (\yb(x,T)-y_{dT}(x)) z(x,T)  {\rm d}x.
\end{multline*} 
We conclude with \eqref{def-psi}.}\fi
\finsquare

\subsection{Proof of Theorem \ref{ThmSC}}
What remains to prove is similar to what has been done in Aronna, Bonnans and Goh \cite[Theorem 5]{MR3555384}, except that here the control variable may be multidimensional and in \cite{MR3555384} it is scalar.

We start by showing item a).
If the conclusion does not hold,
there must exist a sequence $(u_{\ell},y_\ell)$
of admissible trajectories, with $u_\ell$ distinct from $\ub,$
such that $v_{\ell}:=u_{\ell}-\ub$
converges to zero in the Pontryagin sense, 
and satisfies, \be
\label{inequ1}
F(u_{\ell}) \le F(\ub) + o(\Upsilon_{\ell}^2),
\ee
where $(w_\ell,\zeta_\ell)$ is obtained by Goh transform \eqref{Goh}, $h_{\ell}:=w_{\ell}(T)$ and  
$$\Upsilon_{\ell} := \sqrt{ \|w_{\ell}\|_2^2 + |w_{\ell}(T)|^2}.
$$
Let $(p,\mu)\in \Lambda_1$.
Recall the definition of the Lagrangian $L$ of the reduced problem given in \eqref{LangrianG}. Let us employ the shorter notation $L[\mu]$ to refer to $L[1,\mu].$
Adding 
$\int_0^T g(y_{\ell}) \dd \mu\le 0$ on both sides of \eqref{inequ1} leads to
\be
\label{equ-exp:L}
L[\mu](\ub + v_{\ell}) \le L[\mu](\ub) 
+ o(\Upsilon_{\ell}^2). 
\ee
Set $(\vb_{\ell}, \wb_{\ell},\hb_{\ell}) := 
(v_{\ell},w_{\ell},h_{\ell})/\Upsilon_{\ell}$.
Then $(\wb_{\ell},\hb_{\ell})$ has unit norm in $L^2(0,T)^m \times \RR^m$. Extracting if necessary a subsequence,
we may assume that there exists $(\wb, \hb)$
in
$L^2(0,T)^m \times \RR^m$ such that
\be
\wb_{\ell}\rightharpoonup  \wb \quad \text{and} \quad  h_{\ell}\rightarrow  \hb,
\ee
where the first limit is given in the weak topology of $L^2(0,T)^m$.
Set $\bar\zeta:=\zeta[\wb].$ The remainder of the proof is split in two parts:

\textbf{Fact 1:} The triple $(\zetab,\wb,\hb)$ belongs to $PC_2^*$ (defined in \eqref{p-cal}).

\textbf{Fact 2:} The inequality \eqref{inequ1} contradicts the hypothesis of uniform positi\-vi\-ty~\eqref{Q-coerc_cond}.

\textbf{Proof of Fact 1.}  We divide this part in four steps: {\bf (a)} $\wb_i$ is constant on each maximal arc of $I_i$, for $i=1,\dots,m,$ {\bf (b)} \eqref{lem-pc2-1}(i),(ii) hold, {\bf (c)} \eqref{w-C} holds, and {\bf(d)} \eqref{jumpcond}  holds.

\textbf{(a)} From Proposition \ref{suf-cond.p}, inequality \eqref{equ-exp:L}, and \eqref{FirstControl} we have
\be
\label{equ2}
- \whq[p,\mu](\zeta_{\ell},w_{\ell},h_{\ell})+ o(\Upsilon^2_{\ell}) \ge \sum_{i=1}^m \int_0^T \Psi^p_i(t)\cdot v_{\ell,i}(t) \dd t \ge 0.
\ee
By the continuity of the quadratic form $\whq[p,\mu]$ 
over the space $L^2(0,T)^m \times \RR^m,$ 
we deduce that
\begin{align}
 0 \le \int_0^T \Psi^p_i(t) v_{\ell,i}(t) \dd t \le O(\Upsilon^2_{\ell}),\quad \text{for all } i=1,\dots, m.
\end{align}
Hence, since the integrand in previous inequality is nonnegative for all $\ell \in \NN$, we
have that
\begin{align}\label{equ1}
 \lim_{\ell \rightarrow \infty }\int_0^T 
\Psi^p_i(t)\varphi(t) \vb_{\ell,i}(t) \dd t = 0
\end{align}
for any nonnegative  $C^1$ function $\varphi \colon [0,T] \rightarrow \RR$. 
Let us consider, in particular, $\varphi$ having its support $[c,d]\subset I_i$. Integrating by parts in \eqref{equ1} and using that $\wb_{\ell}$ is a
primitive of $\vb_{\ell}$, we obtain
\begin{align}
 0= \lim_{\ell\rar \infty}  \int_0^T \frac{\dd}{\dd t}(\Psi^p_i \varphi )\wb_{\ell,i}\dd t=\int_c^d \frac{\dd}{\dd t}  (\Psi^p_i(t)\varphi )\wb_i \dd t.
\end{align}
Over $[c,d]$, $\vb_{\ell,i}$ has constant sign and, therefore, 
$\wb_i$ is either nondecreasing or 
nonincreasing. Thus, we can integrate by parts in the latter equation to get
\be
\int_c^d \Psi^p_i(t)\varphi(t)  \dd \wb_i(t)  = 0.
\ee
Take now any $t_0 \in (c,d)$. {Assume, w.l.g. that $t_0\in \check{I}_i.$} By the strict complementary condition for the 
control constraint given in \eqref{strict-compl}, there exists a multiplier such that the associated $\Psi^p$ verifies $\Psi^p_i(t_0)> 0$. Hence, in view of the continuity of $\Psi^p_i$ on $I_i$, there exists 
$\varepsilon >0$ such that $\Psi^p_i>0$ on $(t_0 - 2\varepsilon,t_0 + 2\varepsilon) \subset (c,d)$.
Choose $\varphi$ such that $\supp \varphi \subset (t_0 - 2\varepsilon,t_0 + 2\varepsilon)$, 
and $\Psi^p_i\varphi \equiv1$ on $(t_0 - \varepsilon, t_0 + \varepsilon)$, {then $\wb_i(t_0+\eps)-\xb_i(t_0-\eps) =0.$ Since $\dd \wb_i \ge 0$,
we obtain
$\dd \wb_i =0$ a.e. on $\check I_i.$
Since $t_0$ is an arbitrary point in the interior of $I_i,$ we get
\be
\label{dw-zero}
\dd \wb_i =0\quad \text{ a.e. on }\check I_i.
\ee
This concludes step {\bf (a)}.}

\noindent\textbf{(b)} 
We now have to prove \eqref{lem-pc2-1}(i),(ii). Assume now that $B_0 \neq \emptyset$ or, w.l.g., that $\check{B}_0\neq \emptyset,$ and let $i \in \check{B}_0$.
By the previous step, 
$\wb_i$ is equal to some
constant $\theta$ a.e. over $(0,\tau_1)$.
Let us show that 
$\theta = 0$.
By the strict complementarity 
condition for 
the control constraint \eqref{strict-compl} there exist $t, \delta > 0$ {and a  multiplier such that the associated $\Psi^p$} satisfies $\Psi_i^p > \delta$ 
on $[0, t]\subset [0,\tau_1)$. 
{
By considering in \eqref{equ1} a 
nonnegative Lipschitz continuous function $\varphi \colon [0,T] \rar \RR$ being equal to  $1/\delta$ on $[0,t]$, with support included in $[0,\tau_1),$ and since $\vb_{\ell,i} \ge 0$ a.e. on $[0,\tau_1],$ we obtain, for any $\tau\in [0,t],$
\be
\wb_{\ell,i}(\tau) =\int_0^\tau  \vb_{\ell,i}(s) \dd s \leq \int_0^t \Psi_i^p(s) \varphi(s) \vb_{\ell,i}(s) \dd s    \rar 0,\quad \text{when }\ell \to \infty.
\ee
Thus $\wb_i=0$ a.e. on $[0,t].$ Consequently, from \eqref{dw-zero} we get $\wb_i=0$ a.e. on $[0,\tau_1).$}
The case when 
$i \in  B_{r-1} $ follows by a similar argument. This yields item {\bf(b)}.

\noindent{\bf(c)} {Let us prove \eqref{w-C}. 
We have, since $y_\ell$ is admissible and  $g$ linear, 
\be
0 \ge g_j(y_{\ell}(\cdot,t)) - g_j(\yb(\cdot,t)) = \int_\Omega c_j(x)(y_\ell-\yb)(x,t)\dd x,\quad \text{on } [\tau_k,\tau_{k+1}],
\ee
whenever $k,j$ are such that $k\in \{0,\dots,r-1\}$ and $j\in C_k.$
Let $z_\ell$ denote the linearized state corresponding to $v_\ell$, and let $\eta_\ell := y_\ell-\yb-z_\ell.$ By Lemma \eqref{suf-cond.p-7-lem}(iii), we deduce that
\be
\label{equ3}
\int_\Omega c_j(x)  z_\ell(x,t)\dd x \leq - \int c_j(x)\eta_\ell(x,t)\dd x \leq o(\Upsilon_\ell),\quad \text{on } [\tau_k,\tau_{k+1}].
\ee
Thus, by the Goh transform \eqref{Goh},
\be
\label{equcj}
\int_\Omega c_j(x) (\bar{\zeta}_\ell(x,t)+B(x,t)\cdot \wb_\ell(t))\dd x \leq o(1),\quad \text{on } [\tau_k,\tau_{k+1}],
\ee
where $\bar\zeta_\ell$ is the solution of \eqref{zeta} corresponding to $\wb_\ell.$
Let $\varphi$ be some time-dependent nonnegative continuous function with 
support included in $I_j^C.$
From \eqref{equcj}, we get that
\be
\label{equcj2}
\int_{\tau_k}^{\tau_{k+1}} \varphi \int_\Omega c_j (\bar{\zeta}_\ell+B\cdot \wb_\ell)\dd x \dd t\leq o(1).
\ee
Taking the limit $\ell \to \infty$ yields
\be
\label{equcj3}
\int_{\tau_k}^{\tau_{k+1}} \varphi \int_\Omega c_j (\bar{\zeta}+B\cdot \wb)\dd x \dd t\leq 0,
\ee
where $\bar\zeta$ is the solution of \eqref{zeta} associated to $\wb.$ Since \eqref{equcj3} holds for any nonnegative $\varphi,$ we get that 
\be
\label{linstateconsleq0}
\int_\Omega c_j (\bar{\zeta}(x,t)+B(x,t)\cdot \wb(t))\dd x \leq 0,\quad \text{a.e. on } [\tau_k,\tau_{k+1}].
\ee
In particular, if $T\in I_j^C,$ we get from \eqref{equcj} that
\be
\label{equcj4}
\int_\Omega c_j (\bar{\zeta}(x,T)+B(x,T)\cdot \hb)\dd x \leq 0.
\ee
We now have to prove the converse inequalities in \eqref{equcj3} and \eqref{equcj4}. 
}

By Proposition \ref{prop-1} and since $u_\ell$ is admissible, we have
\be\label{equ4}
\begin{split}
 \sum_{j=1}^q \int_0^T g_j'(\yb(\cdot,t))z(\cdot,t)  {\rm d} \mu_j(t) +  DJ(\ub,\yb)(z,v) =  \int_0^T \Psi^p(t) \cdot v_\ell(t) \dd t\ge 0.
\end{split} 
\ee  
By Proposition \ref{expansion:J}, we have 
$
F(u_\ell)=F(\ub) + DF(\ub)v_\ell + o(\Upsilon_{\ell}).
$
This, together with \eqref{equ4}, yield
\be
0 \le F(u_\ell) - F(\ub) + o(\Upsilon_{\ell}) +  \sum_{j=1}^q \int_0^T g_j'(\yb(\cdot,t))z(\cdot,t) {\rm d} \mu_j(t).
\ee
Using \eqref{inequ1} in latter inequality implies that 
\be
 -o(\Upsilon_\ell) \leq  \sum_{j=1}^q \int_0^T g_j'(\yb(\cdot,t))z(\cdot,t)  {\rm d} \mu_j(t),
\ee
thus
\be
 o(1) \leq  \sum_{j=1}^q \int_0^T g_j'(\yb(\cdot,t))(\bar\zeta_\ell(\cdot,t) + B(\cdot,t)\cdot \wb_\ell(t))  {\rm d} \mu_j(t).
\ee
Since, for every $j=1,\dots,q,$ the measure $\dd \mu_j$ has an essentially bounded density over $[0, T )$ (in view of Theorem \ref{sonc:setting-t1}), we have that
\be\label{equ5}
\begin{aligned}
 &0  \le \liminf_{\ell \rar \infty}  \sum_{j=1}^q  \int_{[0,T]} g_j'(\yb(\cdot,t))(\zetab_\ell(\cdot,t) + B(\cdot,t)\cdot\wb_{\ell}(t))  {\rm d} \mu_j\\
& = \lim_{\ell\rar \infty} \sum_{j=1}^q  \int_{[0,T)} g_j'(\yb(\cdot,t))( \zetab_{\ell}(\cdot,t)+ B(\cdot,t) \cdot\wb_\ell(t)){\rm d} \mu_j.\\
\end{aligned}
\ee
Using \eqref{linstateconsleq0} and the strict complementarity for the state constraint \eqref{strict-compl}(ii), we get \eqref{w-C}. This concludes the proof of item {\bf (c)}.
\\
\noindent{\bf (d)} Let us now prove \eqref{jumpcond}. Assume that $j\in\{1,\dots,q\}$ is such that $T \in I^C_j.$  One inequality was already proved in \eqref{equcj4}. If we further have that $[\mu_j(T )] > 0,$  condition \eqref{jumpcond} follows from \eqref{equ5}.

We conclude that the limit direction $(\zetab,\wb,\hb)$ belongs to $PC_2^*.$

\textbf{Proof of Fact 2}. From Proposition \ref{suf-cond.p} we obtain 
\begin{equation}
\begin{aligned}
\label{equ6}
\whq[p,\mu] &(\zeta_\ell,w_{\ell}, h_{\ell}) \\
&= \call[p,\mu](u_{\ell},y_\ell) - \call[p,\mu](\ub,\yb) -
\int_0^T \Psi^p \cdot v_{\ell}\dd t + o(\Upsilon^2_{\ell} )\le o(\Upsilon^2_{\ell} ),
\end{aligned}
\end{equation}
where the last inequality follows from \eqref{equ-exp:L} and since $\Psi^p \cdot v_{\ell}\ge 0$ a.e. on~$[0, T ]$ in view of  the first order condition \eqref{FirstControl}.
Hence,
\be
\liminf_{\ell\rar \infty} \whq[p,\mu] (\bar\zeta_\ell,\yb_{\ell}, \hb_{\ell}) \le \limsup_{\ell \rar \infty}  \whq[p,\mu] (\zetab_\ell,\wb_{\ell}, \hb_{\ell}) \le 0.
\ee
Let us recall that, in view of the hypothesis (iii) of the current theorem, the mapping $\whq[p,\mu]$ is a Legendre form in the
Hilbert space $\{(\zeta[w],w, h) \in Y\times L^2(0,T)^m \times \RR^m\}$. Furthermore, for
the critical direction $(\zetab, \wb, \hb)$, due to the uniform positivity condition \eqref{Q-coerc_cond}, there is a multiplier $(\bar p,\bar{\mu}) \in \Lambda_1$ such that
\be\label{equ7}
\rho (\|\wb\|^2_2+|\hb|^2)
\le \whq[\bar p,\bar{\mu}](\zetab, \wb, \hb) = \liminf_{\ell \rar \infty} \whq[\bar p,\bar{\mu}] ( \zetab_\ell,\wb_{\ell}, \hb_{\ell}) \le 0,
\ee
where the equality holds since $\whq[\bar p,\bar{\mu}]$ is a Legendre form and the  inequality is due to \eqref{equ6}. From \eqref{equ7} we get  $( \wb, \hb) = 0$ and $\ds\lim_{k\rar \infty} \hat \calq[\bar p,\bar{\mu}]( \zetab_\ell,\wb_{\ell}, \hb_{\ell}) =0$.
Consequently, $(\wb_{\ell}, \hb_{\ell})$ converges strongly to $( \wb, \hb) = 0$ which is a contradiction,
since $( \wb_{\ell}, \hb_{\ell})$ has unit norm in $L^2(0,T)^m \times \RR^m$. We conclude that $(\ub, \yb)$ is
a Pontryagin minimum satisfying 
the weak growth condition.

Conversely, assume that the weak quadratic growth condition \eqref{growth-cond} holds at $(\ub,\yb)$ for $\rho>0.$ Note that $(\ub,\yb,\wb),$ with $\wb(t)=\int_0^t \ub(s) \dd s,$ is a Pontryagin minimum of
\be
\label{Paux}
\begin{split}
\min_{u\in \calu_{\rm ad}}\, &J(u,y[u]) - \rho \left( \int_0^T (w - \wb)^2 \dd t + |w(T)-\wb(T)|^2\right),\\
{\rm s.t.\,}&\,\,\dot w = u,\,\, w(0)=0,\,\text{\eqref{stateconstraint} holds},\\
\end{split}
\ee
Applying the second order necessary condition in Theorem \ref{SONC} 
to this problem \eqref{Paux}, followed by the  Goh transform, yields the uniform positivity \eqref{Q-coerc_cond2}. For further details we refer to the corresponding statement for ordinary differential equations in  \cite[Theorem 5.5]{ABDL12}.
$\square$

\if{
\appendix
\section{Proof of Proposition \ref{cc-m1-p}}
\begin{proof}
(i)
Over each arc we have at most one active state constraint,
in view of the uniform local controllability condition
\eqref{controllability}.
On singular arcs 
(i.e., when no constraint is active) the operators
$\cala$, $\calb$ reduce to zero so that no continuity relations of
type \eqref{LemmaJ-2} can be obtained over junctions with a
singular arc. At other junctions we obtain one relation which means that 
$w$ is continuous. 
The last two conditions are easily obtained.
So, the l.h.s. of 
\eqref{cc-m1-p1}
is included in the r.h.s. \\
(ii)  
Given $(w_0,h_0)$ in the r.h.s. of \eqref{cc-m1-p1},
we need to construct 
$(w,h) \in PC$, arbitrarily close to 
$(w_0,h_0)$ in $L^2(0,T)^m\times\RR$.
As a first step, given $\eps>0$, 
let $w_\eps$ be equal to  $w_0$ on B and C arcs, $C^1$ on S arcs, continuous  on $[0,T],$ and such that $\|w_\eps-w_0\|_2 \leq \eps$,
and $w_\eps(T)=h_0$ if the last arc,  i.e. $(\tau_{r-1},\tau_r),$ is singular.
This is possible since it requires modifying $w_0$ only on S arcs,
 maintaining the values at junction points. \\
Take 
$h_\eps:=w_\eps(T)$.
{Next, compute the pair 
$(\zeta_\eps,w'_\eps)\in Y\times L^2(0,T)^m$
satisfying \eqref{zeta},
$w'_\eps(t)= w_\eps(t)$ over B and S arcs, 
and over $C$ arcs, 
$w'_\eps(t)$ is such that the linearized 
state constraint is active, i.e.,
$\int_\om c(x) (\zeta_\eps(x,t) ++B(x,t)\cdot w'_\eps(t)=0$.
}
Then $w'_\eps$
belongs to the set $PC''(\ub)$, defined as $PC$, but relaxing the
continuity condition of $w$ at junctions, and endowed with the 
piecewise (on each arc) $H^1$-norm, denoted as $\|w\|_{H^1_P}$. 
By induction over the arcs and using the 
uniform local controllability condition
\eqref{controllability}, 
setting $h'_\eps := w'_\eps(T)$, 
we easily obtain that 
\be
\label{wp-eps-w-eps}
\| w'_\eps - w_\eps\|_{H^1_P} 
+ | h'_\eps-h_\eps| = O(\eps).
\ee
For $w\in PC''(\ub)$,
let $Aw := \{ w(\tau_+)-w(\tau_-); \; \tau\in\calt\}$
where $\calt$ is the set of junction points. This operator is
 well defined, and $PC = PC''(\ub) \cap \Ker A$. 
By \eqref{wp-eps-w-eps},  $| A w'_\eps | = O(\eps)$.
So, by the infinite dimensional 
\cite[Thm. 2.200]{MR1756264} 
version of Hoffman's Lemma \cite{MR0051275}, 
there exists $w''_\eps\in PC$ such that
\be
\|w''_\eps - w'_\eps \|_{H^1_P} = O( | A w'_\eps |) = O(\eps).
\ee
Consequently,
$
\|w''_\eps - w_\eps \|_{2}
\leq 
\|w''_\eps - w'_\eps \|_{2} + \|w'_\eps - w_\eps \|_{2} 
= O(\eps)$,
and setting
$h''_\eps := w''_\eps(T)$, 
we can estimate 
$|h''_\eps - h_\eps \|_{2}$
in the same way.
The conclusion follows.
\end{proof}
}\fi

}\fi
\appendix

\section{Strong solutions of the heat equation}\label{appendix}

We consider the heat equation with Dirichlet boundary condition:
\be
\label{heat-lp-1}
\dot y - \Delta y = f \; \text{in $Q$}, 
\;\; y(x,0)= y_0(x); \;\; 
y = h \; \text{on $\Sigma$}.
\ee
We have the following result, see Lieberman
\cite[Thm 7.32, p. 182]{MR1465184}:

\begin{theorem}
\label{lieberman.t}
Let $r\geq 2$, $w\in W^{2,1,r}(Q)$ and $f\in L^r(Q)$. 
Setting $y_0:=w(\cdot,0)$ and $h:= \tau_\Sigma w$ 
(trace of $w$ over $\Sigma$), equation
\eqref{heat-lp-1} has a unique solution
$y \in W^{2,1,r}(Q)$.
In addition  there exists $C>0$ such that
\be
\label{ieberman-t-1}
\| y \|_{W^{2,1,r}(Q)} 
\leq C \left(
\| f\|_{L^r(Q)} + \| w \|_{W^{2,1,r}(Q)} 
\right).
\ee
\end{theorem}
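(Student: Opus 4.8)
The plan is to reduce \eqref{heat-lp-1} to a problem with homogeneous initial and lateral data and then invoke the classical maximal $L^p$-regularity for the Dirichlet heat equation on a smooth bounded cylinder. First I would set $\tilde y := y - w$, so that $\tilde y$ has to solve
\be
\dot{\tilde y} - \Delta \tilde y = \tilde f := f - \dot w + \Delta w \ \text{ in } Q, \qquad \tilde y(\cdot,0) = 0, \qquad \tilde y = 0 \ \text{ on } \Sigma .
\ee
Here $\tilde f \in L^p(Q)$ because $w \in W^{2,1,p}(Q)$, and $\|\tilde f\|_{L^p(Q)} \le \|f\|_{L^p(Q)} + \|\dot w - \Delta w\|_{L^p(Q)} \le \|f\|_{L^p(Q)} + C\|w\|_{W^{2,1,p}(Q)}$. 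A point worth emphasizing is that no compatibility condition at the corner $\partial\Omega\times\{0\}$ needs to be checked by hand: it holds because $y_0 = w(\cdot,0)$ and $h = \tau_\Sigma w$ are the time-zero and lateral traces of one and the same $W^{2,1,p}(Q)$-function, while the zero data of $\tilde y$ are trivially admissible. This is precisely why the lemma is stated in terms of a lifting $w$ rather than of abstract data $(y_0,h)$.

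Then I would apply the interior--boundary $L^p$ estimate for the heat equation with homogeneous data. This is exactly the cited result of Lieberman; alternatively it follows from the fact that the Dirichlet Laplacian generates a bounded analytic semigroup on $L^p(\Omega)$ with bounded $H^\infty$-calculus, whence maximal $L^p$-regularity on the finite interval $(0,T)$. Either way one obtains a unique $\tilde y \in W^{2,1,p}(Q)$ with $\tilde y(\cdot,0)=0$, $\tilde y|_\Sigma = 0$, and $\|\tilde y\|_{W^{2,1,p}(Q)} \le C\|\tilde f\|_{L^p(Q)}$ for some $C = C(\Omega,T,p)$. Setting $y := \tilde y + w$ gives a solution of \eqref{heat-lp-1} in $W^{2,1,p}(Q)$, and the triangle inequality together with the above bound on $\|\tilde f\|_{L^p(Q)}$ yields \eqref{ieberman-t-1}.

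For uniqueness, the difference $y_1 - y_2$ of two $W^{2,1,p}(Q)$-solutions of \eqref{heat-lp-1} solves the homogeneous problem, so the same estimate forces $y_1 = y_2$; since $p \ge 2$ one may alternatively multiply by $y_1-y_2$, integrate over $\Omega$, and apply Gronwall's Lemma, exactly as in the proof of Lemma~\ref{state-equ-estimA.l}.

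The only genuine analytic input is the maximal $L^p$-regularity statement itself, which is not to be reproved here: it is a classical theorem (Ladyzhenskaya--Solonnikov--Ural'tseva, Lieberman) whose proof rests on Calder\'on--Zygmund estimates for the heat kernel up to the parabolic boundary. So there is no real obstacle; the content of this lemma is the convenient repackaging of that theorem in lifted form, which is what makes the regularity arguments elsewhere in the paper (such as Theorem~\ref{A:thm2}) go through cleanly.
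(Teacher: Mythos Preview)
Your reduction argument is correct and is the standard way to derive such a result. Note, however, that the paper does not give its own proof of this theorem: it simply states the result and cites Lieberman \cite[Thm~7.32, p.~182]{MR1465184}. So there is nothing to compare against beyond the observation that your lifting $\tilde y := y - w$ and appeal to maximal $L^p$-regularity for the homogeneous Dirichlet heat problem is exactly the kind of argument that underlies the cited reference, and your remark about why compatibility is automatic (because $y_0$ and $h$ are traces of a single $W^{2,1,p}(Q)$ function) is the right explanation for why the theorem is formulated in terms of a lifting~$w$.
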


\begin{cor}
\label{lieberman.t-coro}
Given $r\geq 2$, $y_0\in  \blue{W^{1,r}_0(\Om)\cap W^{2,r}(\Om)}$
and $f\in L^r(Q)$, equation 
\eqref{heat-lp-1} has, for $h=0$,
a unique solution
$y \in W^{2,1,r}(Q)$ that satisfies 
\be
\label{ieberman-t-2}
\| y \|_{W^{2,1,r}(Q)} 
\leq C \left(
\| f\|_{L^r(Q)} + \| y_0 \|_{W^{2,r}(\Om)} 
\right).
\ee
\end{cor}

\begin{proof}
Apply Theorem \ref{lieberman.t} with
$w(x,t):= y_0(x)$. It is clear that
$w \in W^{2,1,r}(Q)$ and that $w$
has trace $y_0$ at time 0 and zero trace
over $\Sigma$. The conclusion follows.
\end{proof}

By the standard Sobolev embeddings,
we have the continuous inclusion
\be
\label{sob-D1}
W^{2,1,r}(Q) \subset W^{1,r}(Q) \subset L^\infty(Q), \quad
\text{if $r>n+1$.}
\ee
This allows to prove the following.

\begin{theorem}\label{A:thm2}
Assume that 
$u\in L^\infty(0,T)$, $y_0\in \blue{W^{1,r}_0(\Om)\cap W^{2,r}(\Om) }$ and 
$f\in L^r(Q)$, with $r>n+1$.
Then the state equation \eqref{dynamics}
has a unique solution $y[u,y_0,f]$ in $W^{2,1,r}(Q)$,
and the mapping
$y[u,y_0,f]$ is of class $C^\infty$ from
$L^\infty(0,T) \times \blue{W^{1,r}_0(\Om)\cap W^{2,r}(\Om) }\times L^r(\Om)$ into $W^{2,1,r}(Q)$.
\end{theorem}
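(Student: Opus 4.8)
The plan is to reduce Theorem \ref{A:thm2} to Corollary \ref{lieberman.t-coro} via the Implicit Function Theorem, in close analogy with part (iv) of the proof of Lemma \ref{state-equ-estimA.l}, but now working in the $W^{2,1,p}(Q)$ scale instead of the $Y$-scale. First I would fix $p > n+1$ and set up the function-space framework: the natural state space is $\calx := \{ y \in W^{2,1,p}(Q) ; \; y = 0 \text{ on } \Sigma \}$, and the data spaces are $L^\infty(0,T)^m$ (for $u$), $W^{2,p}_0(\Om)$ (for $y_0$), and $L^p(Q)$ (for $f$). Note that by \eqref{sob-D1} we have the continuous embedding $\calx \subset W^{1,p}(Q) \subset L^\infty(Q)$, which is what makes the cubic term and the bilinear term well-behaved in this scale.

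Next I would define the map $F : L^\infty(0,T)^m \times \calx \times W^{2,p}_0(\Om) \times L^p(Q) \to L^p(Q) \times W^{2,p}_0(\Om)$ by
\be
F(u,y,y_0,f) := \Big( \dot y - \Delta y + \gamma y^3 - y \sum_{i=0}^m u_i b_i - f, \; y(\cdot,0) - y_0 \Big),
\ee
and check it is well-defined and $C^\infty$. Well-definedness of the first component in $L^p(Q)$ uses that $y \in L^\infty(Q)$ (so $y^3 \in L^\infty(Q) \subset L^p(Q)$) and that $u \in L^\infty(0,T)^m$, $b \in L^\infty(\Om)^{m+1}$ give $y \sum u_i b_i \in L^\infty(Q) \subset L^p(Q)$; the second component lands in $W^{2,p}_0(\Om)$ because the trace at time zero of an element of $W^{2,1,p}(Q)$ lies in $W^{2-2/p,p}(\Om)$ — here I would either invoke the trace characterization (e.g. \cite[Lemma 3.4]{MR0241822}) or, more simply, observe that what we really need is just that $y(\cdot,0)$ makes sense in a space containing $W^{2,p}_0(\Om)$, and adjust the target space accordingly so the argument is clean. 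The $C^\infty$ regularity is standard: $F$ is affine in $(u, f, y_0)$ and polynomial (cubic) in $y$, and the Nemytskii map $y \mapsto y^3$ is smooth from $L^\infty$-bounded subsets of $\calx$ into $L^p(Q)$.

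Then I would verify the hypotheses of the IFT at a given solution. Given any $u \in L^\infty(0,T)^m$, the partial derivative $D_y F(u,y,y_0,f)$ applied to $z \in \calx$ is $(\dot z - \Delta z + 3\gamma y^2 z - z\sum_i u_i b_i, \; z(\cdot,0))$; bijectivity of this linear operator from $\calx$ onto $L^p(Q) \times W^{2,p}_0(\Om)$ amounts to unique solvability in $W^{2,1,p}(Q)$ of $\dot z - \Delta z = \tilde f - (3\gamma y^2 - \sum_i u_i b_i) z$ with prescribed initial datum and zero lateral trace, i.e. a linear parabolic equation with coefficient $e := 3\gamma y^2 - \sum_i u_i b_i \in L^\infty(Q)$. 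This I would handle by a fixed-point / continuation argument based on Corollary \ref{lieberman.t-coro}: on a short time interval the map $z \mapsto$ (solution with right-hand side $\tilde f - e z$) is a contraction in $W^{2,1,p}$ by the a priori estimate \eqref{ieberman-t-2} and the embedding $W^{2,1,p} \hookrightarrow L^\infty$, and one iterates over finitely many subintervals. Existence of at least one solution $y = y[u,y_0,f] \in W^{2,1,p}(Q)$ I would get from the $Y$-solution of Lemma \ref{state-equ-estimA.l} combined with a bootstrap: that solution satisfies $\dot y - \Delta y = f + y\sum_i u_i b_i - \gamma y^3 =: \hat f$; using continuity of $Y \hookrightarrow C(\bar Q)$ from Theorem \ref{sonc:setting-t1}(i) — or, to avoid circularity, first an $L^q$-bootstrap — one shows $\hat f \in L^p(Q)$, whence Corollary \ref{lieberman.t-coro} upgrades $y$ to $W^{2,1,p}(Q)$; uniqueness in $W^{2,1,p}(Q)$ follows a fortiori from uniqueness in $Y$. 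Once a solution exists and $D_y F$ is an isomorphism there, the IFT gives both local solvability and $C^\infty$ dependence, and uniqueness makes the local solution map global.

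The main obstacle I expect is the trace/compatibility bookkeeping at $t=0$: matching the initial datum $y_0 \in W^{2,p}_0(\Om)$ with the time-zero trace of $W^{2,1,p}(Q)$, which naturally lives in the Besov space $W^{2-2/p,p}(\Om)$, and ensuring the codomain of the second component of $F$ is chosen so that $D_y F$ is genuinely surjective (this is exactly why Theorem \ref{lieberman.t} is stated with an ambient $w \in W^{2,1,p}(Q)$ rather than an abstract trace space). The cleanest route is to do as in Lemma \ref{state-equ-estimA.l}(iv): take $Y^0_p$ (the trace at time zero of $Y_p$, defined in the Notation section) as the target space for the initial-condition component, note it is a Banach space containing $W^{2,p}_0(\Om)$, and run the IFT there; the bilinear and cubic terms pose no new difficulty beyond what Lemma \ref{lem-uby} and the $L^\infty$-embedding already provide.
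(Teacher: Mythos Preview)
Your proposal is correct and follows the paper's overall IFT strategy, but differs in one substantive step: how you obtain the a~priori $L^\infty(Q)$ bound on the $Y$-solution (which is what makes $y^3\in L^p(Q)$ and hence feeds Corollary~\ref{lieberman.t-coro}). The paper does this by a \emph{comparison} argument: it decomposes $-\Delta y_0=g^+-g^-$ and $f=f^+-f^-$, uses the monotonicity in Lemma~\ref{state-equ-estimA.l} together with the odd symmetry of the equation to sandwich $-y^-\le y\le y^+$, and then compares $y^\pm$ with the solutions $y^{++},y^{--}$ of the \emph{linear} problem ($\gamma=0$), which lie in $W^{2,1,p}(Q)\subset L^\infty(Q)$ by Corollary~\ref{lieberman.t-coro}; this gives $y\in L^\infty(Q)$ in one shot. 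Your route is instead a regularity bootstrap: $Y\hookrightarrow L^{10}(Q)$ (for $n\le 3$) gives $y^3\in L^{10/3}(Q)$, one application of Corollary~\ref{lieberman.t-coro} yields $y\in W^{2,1,10/3}(Q)\hookrightarrow L^\infty(Q)$ (since $10/3>(n+2)/2$), and a second application then gives $y\in W^{2,1,p}(Q)$. Both arguments are valid; the paper's comparison is slicker and exploits $\gamma\ge 0$ and the odd cubic structure, while your bootstrap is more generic and would survive a non-sign-definite nonlinearity. You were right to flag the circularity in invoking Theorem~\ref{sonc:setting-t1}(i) --- it does rely on the present theorem --- and your fallback bootstrap is the correct fix; just make the two-step iteration explicit. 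Your handling of the trace space via $Y^0_p$ and of $D_yF$ bijectivity matches the paper's (equally terse) treatment.
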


\begin{proof}
We have that $g := -\Delta y_0$ belongs to $L^r(\Om)$.
Let $y^\pm_0$ be the unique solution of 
$-\Delta y^\pm_0 =g^\pm$ in $\Om$,
where   $g^+:=\max(g,0)$ and $g^-:=-\min(g,0)$,
with homogeneous Dirichlet condition on the boundary.
Set $f^+ := \max(f,0)$ and $f^-:=-\min(f,0)$.
Denote by $y^+$ (resp., $y^-$) the solution of the state equation \eqref{dynamics} when
$(y_0,f)$ is $(y_0^+,f^+)$ (resp. $(y_0^-,f^-)$).
By the monotonicity results in Lemma 
\ref{state-equ-estimA.l},
we have that
$-y^- \leq y \leq y^+$.
Now let $y^{++}$, $y^{--}$ denote the solutions of the state equation \eqref{dynamics} 
when
$(y_0,f)$ is  $(y_0^+,f^+)$, $(y_0^-,f^-),$ respectively
and, in addition, $\gamma=0$.
We claim that
$ - y^{--} \leq - y^- \leq y \leq y^+ \leq y^{++}$.
Indeed, for $z \in Y$, set 
$H_u z := \dot z - \Delta z - z \sum_i u_i b_i$.
Then
\be
H_u y^{+}  = f^+-\gamma (y^+)^3 \leq f^+
=
H_u y^{++}.
\ee
Since $y^+$ and $y^{++}$ have the same initial conditions,
it follows that $y^+ \leq y^{++}$.
In an analogous way, it can be proved that 
$ - y^{--} \leq - y^-.$

Since $y_0^\pm \in \blue{W^{1,r}_0(\Om)\cap W^{2,r}(\Om) }$ and 
$f^\pm\in L^r(Q)$, 
by Corollary \ref{lieberman.t-coro},
$y^{++}$ and $y^{--}$ belong to  $W^{2,1,r}(Q)$
and, therefore, since $r>n+1$, they are also elements of $L^\infty(Q)$.
So, $y \in L^\infty(Q)$.
Consequently, $H_u y = f-\gamma y^3\in L^r(\Om)$
and, by Theorem \ref{lieberman.t} again,
$y \in W^{2,1,r}(Q)$.

We recall that, for $r > n+1$, $Y_r$ denotes the set of elements of
$W^{2,1,r}(Q)$ with zero trace on $\Sigma$,
and $Y^0_r$ denotes the trace of $Y_r$ at time zero.
Endowed with the ``trace norm", $Y^0_r$
is a Banach space that contains $ W^{1,r}_0(\Om) \cap W^{2,r}(\Om)$
in view of the proof of the above Corollary~\ref{lieberman.t-coro} 
(by Lions \cite[p. 20]{MR712486}, $Y^0_r$ 
is a subset of $W^{2-2/r,r}(\Om)$).
That $(u,y_0,f) \mapsto y[u,y_0,f]$ is of class $C^\infty$ is a consequence of the 
Implicit Function Theorem applied to the mapping 
$F$ from $Y_r\times L^\infty(0,T) \times Y_r^0 \times L^r(Q)$
into $L^r(Q)\times Y^0_r$, defined by
\be
F(y,u,y_0,f) := ( H_u y + \gamma y^3, y(0)- y_0).
\ee
The key step is to prove that the partial derivative 
$D_y F$ is bijective; this can be done easily, 
\blue{taking advantage of the fact that 
$W^{2,1,r}(Q) \subset L^\infty(Q)$ when $r>n+1$}.
\end{proof}

\section{An example}
Since we made a number of hypotheses about the
optimal trajectory, especially at junction points,
it is useful to give an example where these hypotheses are satisfied.
For that purpose we discuss a particular case in which the original optimal
control problem can be reduced to the optimal control of a scalar ODE. 

Let $\Om = (0,1),$  and denote by 
$c_1(x):=\sqrt 2 \sin \pi x$ the first (normalized) eigenvector
of the Laplace operator.

We assume that $\gamma=0$, 
 the control is scalar ($m=1$), 
$b_0 \equiv 0$ and $b_1 \equiv 1$ in $\Omega,$
and that $f \equiv 0$ in $Q.$ 
Then the state equation with initial condition $c_1$ reads
\be
\label{eqex}
\dot y(x,t) - \Delta y(x,t) = u(t) y(x,t); \;\quad (x,t) \in (0,1)\times (0,T),
\quad y(x,0) = c_1(x),\quad x\in \Om.
\ee 
It is easily seen that the state satisfies
$y(x,t) = y_1(t) c_1(x)$, where $y_1$ is solution of 
\be
\dot y_1(t) + \pi^2 y_1(t) = u(t) y_1(t); \;\quad t \in (0,T),
\quad y_1(0) = y_{10}=1. 
\ee
We set $T=3$ and consider the state constraint \eqref{state-constraints} with $q=1$
and $d_1:=-2,$
and the cost function \eqref{cost} with $\alpha_1=0$. 
%
%
The state constraint reduces to
\be
y_1(t) \leq 2,\quad t\in [0,3].
\ee
As target functions take
$y_{dT}:= c_1$  and
$y_d(x,t) := \yh_d(t) c_1(x)$ with
\be
\yh_d(t) :=
\left\{ \ba{lll}
1.5e^t &\quad \text{for }t\in (0, \log 2),
\vspace{1mm} \\ 
3 &\quad \text{for }t\in (\log 2, 1),
\vspace{1mm} \\ 
4- t &\quad \text{for }t\in (1, 3).
\ea\right.\ee
 We assume that the lower and upper bounds for the control are  $\umin := -1$ and $\umax :=\pi^2+1$.
We will check that the optimal control is
\be\label{ex-control}
\ub(t) :=
\left\{ \ba{ll}
\umax & \quad \text{for } t\in (0, \log 2), \vspace{1mm}  \\ 
\pi^2 & \quad \text{for } t\in (\log 2, 2),  
\vspace{1mm} \\ 
 \pi^2 -1/\hat{y}_d & \quad \text{for } t\in (2, 3).
\vspace{1mm} \\ 
\ea\right.\ee
Thus, for the optimal state we have
\be
\yb_1(t) :=
\left\{ \ba{lll}
e^t & \quad\text{for } t\in (0, \log 2), &
\vspace{1mm} \\ 
 2 & \quad\text{for } t\in (\log 2,2), &  
\vspace{1mm} \\ 
 4-t & \quad \text{for } t\in (2, 3). &
\vspace{1mm} \\ 
\ea\right.\ee
The above control is feasible. The trajectory $(\ub,\yb)$ is optimal since for any
$t\in (0,T)$, the state $\yb_1(t)$ has  the best possible value (in order to approach $\hat y_d$ and minimize the cost function) that respects the state constraint.

Let us check Hypothesis \ref{hyp-setting} for this example. Conditions 1 and 2 are obviously satisfied. 
For the constraint qualification in Condition 3  consider the linearized state equation
with unique $z_1[v]$:
\be
\dot z_1=(\ub - \pi^2) z_1 + v\yb_1; \quad z_1(0)=0,
\ee
with  $v(t):= \umin-\ub(t) < 0$.
One easily checks that  $z_1[v](t) <  0$ for all $t>0$. Hence, we can find $\varepsilon>0$  such that 
\be 
g_1(\yb(\cdot,t)) + g_1'(\yb(\cdot,t))z_1[v](\cdot,t) =
\yb_1(t) -2 + z_1(t) < -\varepsilon,\quad \text{for all } t\in (0,T).
\ee
Conditions 4 holds, since 
\be  
M(t)=\bar M_1(t)=\int_{\Omega} c_1(x) \yb(x,t)\dd x=\yb_1(t)>0\quad \text{for } t \in (0,T).
\ee
For Condition 5 we have
\be
\operatorname{dist}(t,I^C_1)=
\begin{cases}
\log 2 -t & \text{for } t \in  (0, \log 2), \\
 0 & \text{for } t\in (\log 2, 2),  \\
 t- 2  & \text{for } t \in   (2,3),
\vspace{1mm} 
\end{cases}
\ee
and hence,
\be\label{cond-5}
g_1(\yb(\cdot,t))=\yb_1(t)-2
\le -  \operatorname{dist}(t,I^C_1).
\ee
\if{
\bigskip
AK: FOR US CHECK OF \eqref{cond-5}
\be
\begin{aligned}
e^t -2 &\le t- \log 2 && \text{ok},\\
0 &\le 0 && \text{ok}\\
2-t &\le t_2 -t&& \text{ok}
\end{aligned}
\ee
} \fi

\bigskip
Conditions 6 and 8 hold by the choice of the control in \eqref{ex-control}. Condition 7 holds by definition.

We solve this problem numerically using BOCOP \cite{BocopExamples} and get the optimal control and state given in Figure \ref{figureoc}.

\begin{figure}
\begin{center}
\includegraphics[width=0.7\linewidth]{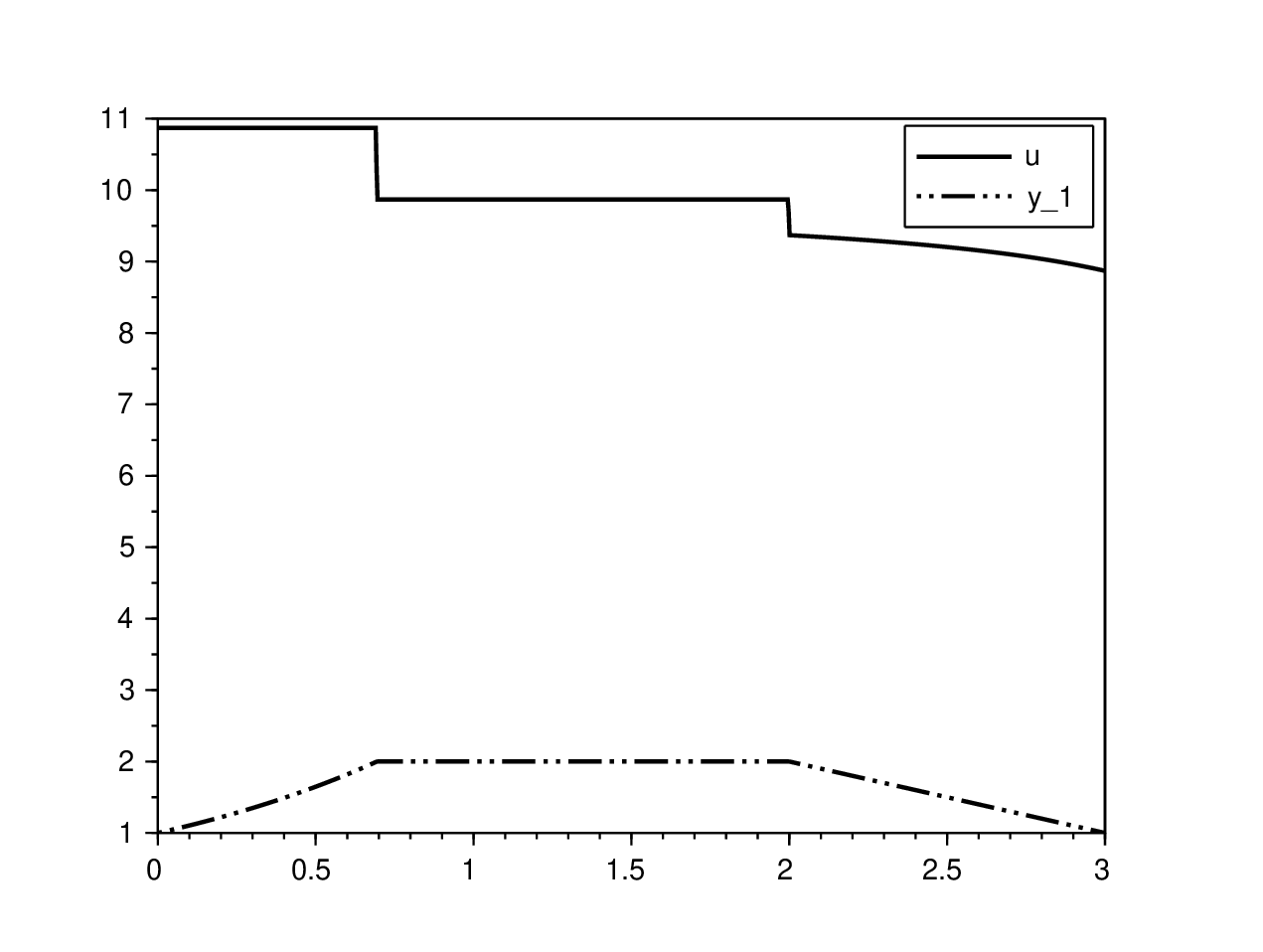}
\caption{Optimal control and state for the example}
\label{figureoc}
\end{center}
\end{figure}

We now discuss the second order optimality condition for this example. 
The costate equation is 
\be
-\dot p +Ap = c_1 (\yb_1-\hat y_d) + c_1\dot{\mu}_1,\quad p(\cdot,T)= \yb(T)-y_{dT} = 0
\ee
\blue{with $A$ as defined in \eqref{lin-state-equ}.}
Since $\yb$ and $y_d$ are colinear to $c_1$,
it follows that $p(x,t) = p_1(t) c_1(x)$, and 
\be
-\dot p_1 + \pi^2 p_1 = \ub p_1 + \yb_1 - \yh_{d}  + \dot{\mu}_1; \quad p_1(3)=0.
\ee
Over $(2,3)$, $\dot{\mu}_1=0$ (sate constraint not active) and 
$\yb_1 = \yh_d$, therefore $p_1$ and $p$ identically vanish.
Over $(\log 2,2)$, $\ub$ is out of bounds and therefore
\be
0 = \int_\Om p(x,t) \yb(x,t) = p_1(t) \yb_1(t) \int_\Om c_1(x)^2 = 2p_1(t).
\ee
It follows that $p_1$ and $p$ also vanish on $(\log 2,2)$ and that
\be
\dot \mu_1 = -( \yb_1 - \yh_{d} ) >0, \quad \text{a.a. $t\in (\log 2,2)$. }
\ee
Over $(0,\log 2),$ the control attains its upper bound, then
\be
-\dot p_1 = p_1 - \half e^t
\ee
with final condition $p_1(\log 2)=0$, so that
\be
p_1(t) = \frac{e^t}{4} - e^{-t}.
\ee
As expected, $p_1$ is negative.

Next, the linearized state equation at $(\ub,\yb)$ reads
\be
\dot z - \Delta z = \ub z + v \yb; \quad z(\cdot,0) = 0.
\ee
Since $\yb=\yb_1(t) c_1(x)$, we deduce that 
$z = z_1(t) c_1(x)$, with $z_1$ solution of 
\be
\dot z_1 + \pi^2 z = \ub z_1 + v \yb_1; \quad z_1(0) = 0.
\ee
Therefore if $(v,z)$ satisfy the linearized state equation
\be
\begin{aligned}
\calq[p](z,v) &= \int_{Q} ( z^2 + p v z)\dd x \dd t + \int_\Om z(x,T)^2 \dd x
= \int_0^3 ( z_1(t)^2 + p_1(t) v(t) z_1(t) ) \dd t + z_1(3)^2.
\end{aligned}
\ee
If in addition $v$ is a critical direction, since  $v=0$ and $z_1=0$
a.e. on $(0, 2)$, and $p_1(t)=0$ on $(2,3),$ we get
\be
\begin{aligned}
\calq[p](z,v) &= \int_2^3 z_1(t)^2  \dd t + z_1(3)^2. 
\end{aligned}
\ee
Thus, $\calq$ is non-negative for any critical directions $(z[v],v)$, 
in accordance with the second-order necessary condition of Theorem \ref{SONC}.

\if{
\subsection{TO BE PUT IN PART II LATER}

Goh transform: for $(v,z)$ solution of the linearized state equation
\be
B:=\yb b = \yb_1(t) c_1(x); \quad \xi:=z-Bw = (z_1 -\yb_1 w) c_1
\ee
and $\xi=\xi_1 c_1$ is solution of
\be
\dot \xi + A \xi = - (AB+\dot B) w; \quad \xi(0)=0;
\ee
where
\be
AB+\dot B = (\pi^2 - \ub) B +\dot \yb_1 c_1
=
( (\pi^2 -\ub) \yb_1  +\dot \yb_1 ) c_1
\ee
so that
\be
\label{dyn_xi}
\dot \xi_1 + (\pi^2-\ub)\xi_1 = B^1 w, \; B^1 := (\pi^2 -\ub) \yb_1  +\dot \yb_1.
\ee
For checking the Legendre condition 
((ii) of Theorem \if{\ref{ThmSC}}\fi), 
we have to check the uniform positivity
of the coefficient 
of $w^2$ in $\hat Q$.
This trivially holds on the second and third arcs, since then $p$
and therefore $\chi$ vanish, so that the coefficient of $w^2$ reduces to 
$\int_\Om \kappa \yb^2 = \yb_1(t)^2 \geq 1$.
We now detail the computation for the first arc.
Replacing $z$ by $\xi + Bw$ in the quadratic form $\calq[p](v,z)$ we have
\be
\begin{aligned}
\tilde{Q}&= \int_{Q} ( (\xi+Bw)^2 + p v (\xi+Bw))\dd x \dd t + \int_\Om (\xi(\cdot,T)+B(\cdot,T)w(T))^2 \dd x.
\end{aligned}
\ee
For the second term in the first integral we have
\be
\begin{aligned}
\int_{Q}  p v (\xi+Bw)\dd x \dd t
&=
\int_0^T \left(p_1 \xi \ddt w + \half p_1 \yb_1 \ddt ( w^2) \right) \dd t 
\\ & =
-\int_0^T \left(\ddt (p_1\xi) w + \half \ddt (p_1 \yb_1) w^2 \right) \dd t + [\text{boundary-terms}].
\\ & =
-\int_0^T \left( p_1B^1 + \half \ddt (p_1 \yb_1) \right) w^2  \dd t + [\text{boundary-terms}].
\end{aligned}
\ee
Finally we obtain that over the first arc, the coefficient of $w^2$
in the integral term of $\tilde{Q}$ is $2+e^{2t}/4$.
It follows that $\widehat{\calq}[p](w,\xi[w])$ is a Legendre form.
}\fi

\bibliographystyle{amsplain}
\bibliography{lit,hjb}

\def\cprime{$'$} \def\cprime{$'$} \def\cprime{$'$} \def\cprime{$'$}
  \def\cprime{$'$} \def\cprime{$'$} \def\cprime{$'$}
\providecommand{\bysame}{\leavevmode\hbox to3em{\hrulefill}\thinspace}
\providecommand{\MR}{\relax\ifhmode\unskip\space\fi MR }
\providecommand{\MRhref}[2]{%
  \href{http://www.ams.org/mathscinet-getitem?mr=#1}{#2}
}
\providecommand{\href}[2]{#2}
\begin{thebibliography}{10}

\bibitem{ABDL12}
M.~S. Aronna, J.~F. Bonnans, A.~V. Dmitruk, and P.~A. Lotito, \emph{Quadratic
  order conditions for bang-singular extremals}, {Numerical Algebra, Control
  and Optimization, AIMS Journal} \textbf{2} (2012), no.~3, 511--546.

\bibitem{ABK-PartII}
M.~S. Aronna, J.~F. Bonnans, and A.~Kr{\"o}ner, \emph{State-constrained
  control-affine parabolic problems {I}{I}: Second-order sufficient optimality
  conditions},  (2019).

\bibitem{MR3555384}
M.~S. Aronna, J.F. Bonnans, and B.~S. Goh, \emph{Second order analysis of
  control-affine problems with scalar state constraint}, Math. Program.
  \textbf{160} (2016), no.~1-2, Ser. A, 115--147.

\bibitem{MR0152860}
J.-P. Aubin, \emph{Un th\'eor\`eme de compacit\'e}, C. R. Acad. Sci. Paris
  \textbf{256} (1963), 5042--5044.

\bibitem{BocopExamples}
J.~Bonnans, J.F., D.~Giorgi, V.~Gr\'elard, B.~Heymann, S.~Maindrault,
  P.~Martinon, O.~Tissot, and J.~Liu, \emph{{Bocop – A collection of
  examples}}, Tech. report, INRIA, 2017.

\bibitem{MR1646703}
J.F. Bonnans, \emph{Second-order analysis for control constrained optimal
  control problems of semilinear elliptic systems}, Appl. Math. Optim.
  \textbf{38} (1998), no.~3, 303--325.

\bibitem{MR2504044}
J.F. Bonnans and A.~Hermant, \emph{Second-order analysis for optimal control
  problems with pure state constraints and mixed control-state constraints},
  Ann. Inst. H. Poincar\'e Anal. Non Lin\'eaire \textbf{26} (2009), no.~2,
  561--598.

\bibitem{MR2683898}
J.F. Bonnans and P.~Jaisson, \emph{Optimal control of a parabolic equation with
  time-dependent state constraints}, SIAM J. Control Optim. \textbf{48} (2010),
  no.~7, 4550--4571.

\bibitem{MR1756264}
J.F. Bonnans and A.~Shapiro, \emph{Perturbation analysis of optimization
  problems}, Springer Series in Operations Research, Springer-Verlag, New York,
  2000.

\bibitem{CasReyTro08}
E.~Casas, J.C. de~Los~Reyes, and F.~Tr{\"o}ltzsch, \emph{Sufficient
  second-order optimality conditions for semilinear control problems with
  pointwise state constraints}, SIAM J. Optim. \textbf{19} (2008), no.~2,
  616--643.

\bibitem{MR2160878}
E.~Casas, Mariano Mateos, and Fredi Tr\"{o}ltzsch, \emph{Necessary and
  sufficient optimality conditions for optimization problems in function spaces
  and applications to control theory}, Proceedings of 2003 {MODE}-{SMAI}
  {C}onference, ESAIM Proceedings, vol.~13, EDP Sciences, 2003, pp.~18--30.

\bibitem{CasTroUn96}
E.~Casas, F.~Tr\"oeltzsch, and A.~Unger, \emph{Second order sufficient
  optimality conditions for a nonlinear elliptic control problem}, J. for
  Analysis and its Applications (ZAA) \textbf{15} (1996), 687--707.

\bibitem{MR2674627}
E.~Casas and F.~Tr{\"o}ltzsch, \emph{Recent advances in the analysis of
  pointwise state-constrained elliptic optimal control problems}, ESAIM Control
  Optim. Calc. Var. \textbf{16} (2010), no.~3, 581--600.

\bibitem{RMRT08}
J.C. de~Los~Reyes, P.~Merino, J.~Rehberg, and F.~Tr\"oltzsch, \emph{Optimality
  conditions for state-constrained {PDE} control problems with time-dependent
  controls}, Control and Cybernetics \textbf{37} (2008), no.~1, 5--38.

\bibitem{Dmi77}
A.V. Dmitruk, \emph{Quadratic conditions for a weak minimum for singular
  regimes in optimal control problems}, Soviet Math. Doklady \textbf{18}
  (1977), no.~2, 418--422.

\bibitem{Dmi08}
\bysame, \emph{Jacobi type conditions for singular extremals}, Control \&
  Cybernetics \textbf{37} (2008), no.~2, 285--306.

\bibitem{evans}
L.C. Evans, \emph{Partial differential equations}, Amer. Math Soc., Providence,
  RI, 1998, Graduate Studies in Mathematics 19.

\bibitem{MR0205719}
B.S. Goh, \emph{Necessary conditions for singular extremals involving multiple
  control variables}, SIAM J. Control \textbf{4} (1966), 716--731.

\bibitem{Kel64}
H.J. Kelley, \emph{A second variation test for singular extremals}, AIAA
  Journal \textbf{2} (1964), 1380--1382.

\bibitem{MR3032877}
K.~Krumbiegel and J.~Rehberg, \emph{Second order sufficient optimality
  conditions for parabolic optimal control problems with pointwise state
  constraints}, SIAM J. Control Optim. \textbf{51} (2013), no.~1, 304--331.

\bibitem{MR1465184}
Gary~M. Lieberman, \emph{Second order parabolic differential equations}, World
  Scientific Publishing Co., Inc., River Edge, NJ, 1996. \MR{1465184}

\bibitem{MR0259693}
J.-L. Lions, \emph{Quelques m\'ethodes de r\'esolution des probl\`emes aux
  limites non lin\'eaires}, Dunod, Paris, 1969.

\bibitem{MR712486}
\bysame, \emph{Contr\^ole des syst\`emes distribu\'es singuliers}, M\'ethodes
  Math\'ematiques de l'Informatique, vol.~13, Gauthier-Villars, Montrouge,
  1983.

\bibitem{LioMag68a}
J.-L. Lions and E.~Magenes, \emph{Probl\`emes aux limites non homog\`enes et
  applications. {V}ol. 1}, Dunod, Paris, 1968.

\bibitem{MR0464007}
H.~Maurer, \emph{On optimal control problems with bounded state variables and
  control appearing linearly}, SIAM J. Control Optimization \textbf{15} (1977),
  no.~3, 345--362.

\bibitem{Mau79a}
H.~Maurer, \emph{On the minimum principle for optimal control problems with
  state constraints}, Schriftenreihe des Rechenzentrum~41, Universit\"at
  M\"unster, 1979.

\bibitem{MaurerKimVossen2005}
H.~Maurer, J.-H.~R. Kim, and G.~Vossen, \emph{On a state-constrained control
  problem in optimal production and maintenance}, pp.~289--308, Springer US,
  Boston, MA, 2005.

\bibitem{McDaPow71}
J.P. McDanell and W.F. Powers, \emph{Necessary conditions for joining optimal
  singular and nonsingular subarcs}, SIAM J. Control \textbf{9} (1971),
  161--173.

\bibitem{MR1739375}
J.-P. Raymond and F.~Tr{\"o}ltzsch, \emph{Second order sufficient optimality
  conditions for nonlinear parabolic control problems with state constraints},
  Discrete Contin. Dynam. Systems \textbf{6} (2000), no.~2, 431--450.

\bibitem{MR2253361}
H.~Sch\"{a}ttler, \emph{Local fields of extremals for optimal control problems
  with state constraints of relative degree 1}, J. Dyn. Control Syst.
  \textbf{12} (2006), no.~4, 563--599.

\end{thebibliography}

\end{document}